\newtheorem{theorem}{Theorem}[subsection]
\newtheorem{proposition}[theorem]{Proposition}
\newtheorem{corollary}[theorem]{Corollary}
\newtheorem{definition}[theorem]{Definition}
\newtheorem{lemma}[theorem]{Lemma}
\newtheorem{example}{Example}
\numberwithin{equation}{section}
\newcommand{\n}{\mathscr N}
\DeclareMathOperator{\End}{End}
\DeclareMathOperator{\Aut}{Aut}
\DeclareMathOperator{\Ad}{Ad}
\DeclareMathOperator{\Pin}{Pin}
\DeclareMathOperator{\Spin}{Spin}
\DeclareMathOperator{\la}{\langle}
\DeclareMathOperator{\ra}{\rangle}
\DeclareMathOperator{\Id}{Id}
\DeclareMathOperator{\Cl}{\rm{Cl}}
\DeclareMathOperator{\GL}{\rm{GL}}
\DeclareMathOperator{\Orth}{\rm{O}}
\DeclareMathOperator{\SO}{\rm{SO}}
\begin{document}
\title[Complete classification]{Complete classification of pseudo $H$-type algebras: II}

\author[Kenro Furutani, Irina Markina]  
{Kenro Furutani, Irina Markina}

\date{\today}
\thanks{
The first author was partially
supported by the Grant-in-aid for Scientific Research (C) No. 26400124, JSPS.,
and the second author was partially supported by ISP project 239033/F20 of Norwegian Research Council, as well as EU FP7 IRSES program STREVCOMS, grant  no. PIRSES-GA-2013-612669}

\subjclass[2010]{Primary 17B60, 17B30, 17B70, 22E15}

\keywords{Clifford module, nilpotent 2-step Lie algebra, pseudo $H$-type algebras, Lie algebra isomorphism, scalar product}

\address{K.~Furutani:  Department of Mathematics, Faculty of Science 
and Technology, Science University of Tokyo, 2641 Yamazaki, Noda, Chiba (278-8510), Japan}
\email{furutani\_kenro@ma.noda.tus.ac.jp}

\address{I.~Markina: Department of Mathematics, University of Bergen, P.O.~Box 7803,
Bergen N-5020, Norway}
\email{irina.markina@uib.no}

\begin{abstract}
We classify a class of 2-step nilpotent Lie algebras related to the representations of the Clifford algebras in the following way. Let $J\colon \Cl(\mathbb R^{r,s})\to\End(U)$ be a representation of the Clifford algebra $\Cl(\mathbb R^{r,s})$ generated by the pseudo Euclidean vector space $\mathbb R^{r,s}$. Assume that the Clifford module $U$ is endowed with a bilinear symmetric non-degenerate real form $\la\cdot\,,\cdot\ra_U$ making the linear map $J_z$ skew symmetric for any $z\in\mathbb R^{r,s}$. The Lie algebras and the Clifford algebras are related by $\la J_zv,w\ra_U=\la z,[v,w]\ra_{\mathbb R^{r,s}}$, $z\in \mathbb R^{r,s}$, $v,w\in U$. We detect the isomorphic and non-isomorphic Lie algebras according to the dimension of $U$ and the range of the non-negative integers $r,s$. 
\end{abstract}

\maketitle
\tableofcontents


\section{Introduction}


The present work is a continuation of~\cite{FM1} and studies 2-step nilpotent graded Lie algebras associated to the representations of Clifford algebras. Let $\Cl_{r,s}$ be the Clifford algebra generated by the pseudo Euclidean vector space $\mathbb R^{r,s}$ and let $J\colon \Cl_{r,s}\to\End(U)$ be its representation. Assume that there exists a non-degenerate symmetric bi-linear form $\la\cdot\,,\cdot\ra_U$ on the Clifford module $U$ such that
$\la J_zx,y\ra_U+\la x,J_zy\ra_U=0$ for all $z\in\mathbb R^{r,s}$ and $x,y\in U$. The pair $(U,\la\cdot\,,\cdot\ra_U)$ is called an {\it admissible module}. The set $U\oplus\mathbb R^{r,s}$, endowed with the Lie bracket defined by $\la z,[x,y]\ra_{\mathbb R^{r,s}}=\la J_zx,y\ra_U$  for $x,y\in U$ and zero otherwise, is called a {\it pseudo $H$-type Lie algebra} and is denoted by $\n_{r,s}(U)$. The pseudo $H$-type Lie algebras $\n_{r,0}(U)$ were introduced in~\cite{Ka80} and their generalisations $\n_{r,s}(U)$ appeared in~\cite{Ci,GKM}. The pseudo $H$-type Lie algebras were actively studied for instance in~\cite{Barbano,CD,FM,FM1,Kaplan81}. They provide a setting for the study of sub-elliptic, hypo-elliptic and Grushin type operators, see for instance~\cite{BieskeG,CChFI} and it is an important particular case of the extended Poincar\'e Lie super algebras~\cite{AC,AS1}. The Lie groups of pseudo $H$-type Lie algebras constitute a source of interesting examples of sub-Riemannian manifolds~\cite{Bell,Hein}, nil-manifolds~\cite{Barco,KOR}, iso-spectral but non-diffeomorphic manifolds~\cite{Bauer}, Damek-Ricci harmonic spaces~\cite{BTV}, symmetric spaces of rank one,~\cite{CDKR,Pansu}.

The authors considered in~\cite{FM1} the classification of pseudo $H$-type Lie algebras whose constructions are based on the minimal dimensional admissible modules $V^{r,s}_{min}$. It has been showen that two Lie algebras $\n_{r,s}(V^{r,s}_{min})$ and $\n_{\tilde r,\tilde s}(V^{\tilde r,\tilde s}_{min})$ are never isomorphic unless $r= \tilde r$ and $s=\tilde s$, or $r=\tilde s$ and $s=\tilde r$. Among the couples $\n_{r,s}(V^{r,s}_{min})$ and $\n_{s,r}(V^{s,r}_{min})$ there are isomorphic and non-isomorphic Lie algebras. 

The present paper is a continuation of~\cite{FM1} and it finishes the classification of the pseudo $H$-type Lie algebras $\n_{r,s}(U)$, where $U$ is not necessary minimal dimensional admissible module.  The first step of the classification depends on the fact whether the Clifford algebra $\Cl_{r,s}$ is simple or not. If the Clifford algebra $\Cl_{r,s}$ is simple, then the Lie algebra $\n_{r,s}(U)$ for $r\neq 3 \,(\text{mod}~4)$ is uniquely defined by the dimension of the admissible module $U$ and does not depend on the choice of the scalar product on $U$. As a consequence in this case we obtain that the Lie algebras $\n_{r,s}(U)$ and $\n_{s,r}(\tilde U)$ are isomorphic if $\dim(U)=\dim(\tilde U)$. If $r=3 \,(\text{mod}~4)$ then the Lie algebra $\n_{r,s}(U)$ depends on the choice of the scalar product on each minimal dimensional component $(V^{r,s}_{min})_i$ in the decomposition $U=\oplus (V^{r,s}_{min})_i$. If $r-s=3 \,(\text{mod}~4)$, then Clifford algebras $\Cl_{r,s}$ are not simple and the classification is more complicate. Recall, that the Lie algebras $\n_{r,0}(U)$ for $r=3 \,(\text{mod}~4)$ are defined only by number of non-equivalent irreducible terms in the decomposition of $U$ into the direct sum of irreducible submodules and any irreducible Clifford module is actually an admissible module, see~\cite{BTV,CDKR}. For the pseudo $H$-type Lie algebras $\n_{r,s}(U)$, $s>0$, and $r=3 \,(\text{mod}~4)$ the classification is more subtle and depends not only on the number of different minimal dimensional modules, but also on the choice of the scalar product on them. These phenomena come from the signature of the scalar product
restricted to the ``common 1-eigenspace'' of a set of maximal number of mutually 
commuting symmetric isometric involutions of the Clifford action on the minimal dimensional module. It is also closely related to the existence or non-existence of a special type of an automorphism of the Lie algebra $\n_{r,s}(U)$ which is identity on the centre.

The structure of the paper is the following. We recall basic properties of Clifford algebras, such as, periodicity, the system of involutions, the structure of admissible modules, and other information needed to complete classification of pseudo H-type Lie algebras in Section~\ref{sec:ClifAlg}. Section~\ref{sec:LieAlgebras} is dedicated to the description of pseudo H-type Lie algebras and the structure of their isomorphisms and automorphisms. 
The main result is contained in Theorems~\ref{main theorem 1}--\ref{main theorem 3}, see Section~\ref{sec:Classification}. In Section~\ref{app}, we present Tables~\ref{tab:com0r47}--\ref{tab:block3} needed to determine important properties of minimal admissible modules for basic cases~\eqref{basic cases}, which are summarised in Table~\ref{t:dim} in Section~\ref{sec:dimensions}. 


\section{Clifford algebras and admissible modules}\label{sec:ClifAlg}


In the section we collect the information 
about Clifford algebras and their admissible modules 
that we need for the classification of pseudo $H$-type Lie algebras.


\subsection{Definitions of Clifford algebras}\label{sec:DefClifAlg}


We denote by $\mathbb R^{r,s}$ the space $\mathbb R^{k}$, $r+s=k$, with
the non-degenerate quadratic form 
$
Q_{r,s}(z)=\sum_{i=1}^{r}z_{i}^{2}-\sum_{j=1}^{s}z_{r+j}^{2}$, $z\in \mathbb R^n
$ of the signature $(r,s)$.
The non-degenerate bi-linear form obtained from $Q_{r,s}$ by polarization
is denoted by $\la\cdot\,,\cdot\ra_{r,s}$. 
We call the form
$\la\cdot\,,\cdot\ra_{r,s}$ a {\it scalar product}. 
A vector $z\in\mathbb R^{r,s}$ is called
{\it positive} if $\la z,z\ra_{r,s}>0$,
{\it negative} if $\la z,z\ra_{r,s}<0$, and {\it null} 
if $\la z,z\ra_{r,s}=0$. 
We use the orthonormal basis
$\{z_1,\ldots,z_r,z_{r+1},\ldots,r_{r+s}\}$ 
for $\mathbb R^{r,s}$, where the basis vectors $z_1,\ldots,z_r$ are
positive and $z_{r+1},\ldots,z_{r+s}$ are negative. 

Let $\Cl_{r,s}$ be the real Clifford algebra 
generated by $\mathbb{R}^{r,s}$, that is the quotient of the tensor algebra 
$$
\mathcal{T}(\mathbb{R}^{r+s})=\mathbb{R}\oplus\left(\mathbb{R}^{r+s}\right)\oplus 
\left(\stackrel{2}\otimes\mathbb{R}^{r+s}\right)\oplus \left(\stackrel{3}\otimes\mathbb{R}^{r+s}\right)\oplus\cdots, 
$$
divided by the two-sided
ideal $I_{r,s}$ which is generated by the elements of the form
$z\otimes z+\la z,z\ra_{r,s}$, $z\in\mathbb{R}^{r+s}$.
The explicit determination of the Clifford algebras 
is given in~\cite{ABS} and they are isomorphic 
to matrix algebras 
$\mathbb{R}(n)$, 
$\mathbb{R}(n)\oplus\mathbb{R}(n)$, 
$\mathbb{C}(n)$, 
$\mathbb{H}(n)$ or 
$\mathbb{H}(n)\oplus\mathbb{H}(n)$ where 
the size $n$ is determined by $r$ and $s$, see~\cite{LawMich}. 

Given an algebra homomorphism $\widehat J\colon
\Cl_{r,s}\to\End(U)$, we call the space $U$ a Clifford module and
the operator $J_{\phi}$ a {\it Clifford action} or a {\it representaion map}
of an element $\phi\in \Cl_{r,s}$. 
If there is a map
$$
\begin{array}{cccccc}
J\colon &\mathbb R^{r,s}&\to&\End(U)
\\
&z &\mapsto &J_z,
\end{array}
$$ 
satisfying $J^2_z=-\la z,z\ra_{r,s}\Id_{U}$ for an
arbitrary $z\in\mathbb R^{r,s}$, then $J$ can be uniquely extended to an algebra homomorphism 
$\widehat J$ by 
the universal property, see, for instance~\cite{Hu,Lam,LawMich}. 
Even though the representation matrices of the Clifford algebras
$\Cl_{r,s}$, and the Clifford modules $U$ are given over the fields $\mathbb{R}$, $\mathbb{C}$ or
$\mathbb{H}$, we refer to $\Cl_{r,s}$ as a real algebra and $U$ as a real vector space.
The dimension of $U$ is a multiple of $n$ over the
corresponding fields $\mathbb{R}$, $\mathbb{C}$ or
$\mathbb{H}$.

If $r-s\not\equiv 3 \,(\text{mod}~4)$, then $\Cl_{r,s}$ is a simple algebra. 
In this case there is only one
irreducible module $U=V_{irr}^{r,s}$ of dimension $n$. 
If $r-s\equiv 3 \,(\text{mod}~4)$, then the algebra $\Cl_{r,s}$ is not simple, 
and there are two non-equivalent irreducible modules.
They can be distinguished by the action of the ordered volume form 
$\Omega^{r,s}=\prod_{k=1}^{r+s}z_k$. In fact, the elements $\frac{1}{2}\big(\Id\mp\Omega^{r,s}\big)$
act as an identity operator on the Clifford module, so 
$J_{\Omega^{r,s}}\equiv\pm \Id$. 
Thus we denote by $V^{r,s}_{irr;\pm}$ two non-equivalent irreducible Clifford modules 
on which the action of the volume form is given by $J_{\Omega^{r,s}}=\prod_{k=1}^{r+s}J_{z_k}\equiv\pm \Id$.
\begin{proposition}{\em\cite{LawMich}}\label{prop:reducibility}
Clifford modules are completely reducible. Namely, let $U$ 
be a Clifford module, then it can be
decomposed into irreducible modules:
\begin{equation}\label{eq:isotypic}
U=
\begin{cases} 
\stackrel{p}\oplus V_{irr}^{r,s},\quad&\text{if}\quad r-s\not\equiv 3\,($\text{\em mod}$~4),
\\
\big(\stackrel{p_+}\oplus V_{irr;+}^{r,s}\big)\oplus
\big(\stackrel{p_-}\oplus V_{irr;-}^{r,s}\big),
\quad&\text{if}\quad r-s\equiv 3 \,($\text{\em mod}$~4).
\end{cases}
\end{equation}
The numbers $p$, $p_+,p_-$ are uniquely determined by the dimension of $U$.
\end{proposition}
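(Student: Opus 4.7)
The plan is to deduce complete reducibility directly from the Wedderburn–Artin structure theorem applied to the explicit matrix-algebra descriptions of $\Cl_{r,s}$ recalled in Section~\ref{sec:DefClifAlg}. Since each $\Cl_{r,s}$ is isomorphic to one of $\mathbb{R}(n)$, $\mathbb{C}(n)$, $\mathbb{H}(n)$, $\mathbb{R}(n)\oplus\mathbb{R}(n)$, or $\mathbb{H}(n)\oplus\mathbb{H}(n)$, it is a finite-dimensional semisimple real associative algebra. A finite-dimensional module over a semisimple algebra is automatically a direct sum of simple submodules, and the isomorphism classes of these simple modules are in bijection with the simple two-sided ideal factors appearing in the Wedderburn decomposition.

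The first step is to split into cases indexed by $r-s \pmod 4$. When $r-s\not\equiv 3 \pmod 4$, the Clifford algebra has a single simple factor, hence a unique irreducible module $V_{irr}^{r,s}$ of dimension $n$, and every Clifford module $U$ decomposes as $p$ copies of $V_{irr}^{r,s}$ with $p=\dim U/\dim V_{irr}^{r,s}$. Uniqueness is immediate from this dimension count.

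In the non-simple case $r-s\equiv 3 \pmod 4$, one verifies that the ordered volume form $\Omega^{r,s}$ is central in $\Cl_{r,s}$ and satisfies $(\Omega^{r,s})^{2}=\Id$. The central idempotents $e_{\pm}=\tfrac{1}{2}(\Id\pm\Omega^{r,s})$ then yield an internal decomposition $U=e_{+}U\oplus e_{-}U$, preserved by the Clifford action. On each summand $\Omega^{r,s}$ acts as $\pm\Id$, so each summand is a module over the corresponding simple factor. Applying the simple case to each piece gives $e_{\pm}U$ as a direct sum of $p_{\pm}$ copies of $V_{irr;\pm}^{r,s}$, with $p_{\pm}=\dim e_{\pm}U/\dim V_{irr;\pm}^{r,s}$.

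The main obstacle is to ensure that $V_{irr;+}^{r,s}$ and $V_{irr;-}^{r,s}$ are genuinely inequivalent Clifford modules, so that the decomposition of a general $U$ is canonical and the multiplicities $p_{+},p_{-}$ are well defined. This is guaranteed by the centrality of $\Omega^{r,s}$: by Schur's lemma the scalar by which $J_{\Omega^{r,s}}$ acts on an irreducible module is a module invariant, and it takes the distinct values $+1$ and $-1$ on $V_{irr;+}^{r,s}$ and $V_{irr;-}^{r,s}$ respectively. Uniqueness of $p_{\pm}$ then follows either from the idempotent splitting above or, equivalently, from the Jordan–Hölder theorem applied to the semisimple module $U$.
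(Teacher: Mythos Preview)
The paper does not give its own proof of this proposition; it is quoted directly from \cite{LawMich} without argument. Your proof is the standard one and is correct: semisimplicity of $\Cl_{r,s}$ follows from the explicit matrix-algebra descriptions recalled in Section~\ref{sec:DefClifAlg}, and complete reducibility of finite-dimensional modules is then immediate. Your treatment of the non-simple case via the central idempotents $e_\pm=\tfrac{1}{2}(\Id\pm\Omega^{r,s})$ is exactly what the paper alludes to just before the proposition, and the inequivalence of $V_{irr;+}^{r,s}$ and $V_{irr;-}^{r,s}$ via Schur's lemma (the central element $\Omega^{r,s}$ acts by distinct scalars) is sound. For completeness you might note that when $r-s\equiv 3\pmod 4$ one has $r+s$ odd, which is what makes $\Omega^{r,s}$ central, and a short sign computation gives $(\Omega^{r,s})^2=\Id$ in this case.

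One minor point worth flagging: the final sentence of the proposition, that $p_+$ and $p_-$ are uniquely determined by the \emph{dimension} of $U$, is a slight imprecision in the paper's phrasing. As your own formulas $p_\pm=\dim(e_\pm U)/\dim V_{irr;\pm}^{r,s}$ show, the multiplicities are invariants of the module structure of $U$; but since $\dim V_{irr;+}^{r,s}=\dim V_{irr;-}^{r,s}$, the integer $\dim U$ alone only fixes the sum $p_++p_-$. Your Jordan--H\"older argument gives the correct uniqueness statement.
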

The module $U= \stackrel{p}\oplus V_{irr}^{r,s}$ is called {\it isotypic} and the second one in~\eqref{eq:isotypic} is non-isotypic.


\subsection{Admissible modules}


\begin{definition}\cite{Ci} A module $U$ of the Clifford algebra $\Cl_{r,s}$ 
is called admissible if there is a scalar product $\la\cdot\,,\cdot\ra_{U}$ on
$U$ such that
\begin{equation}\label{admissibility condition 1}
\la J_{z}x,y\ra_{U}+\la x,J_{z}y\ra_U=0,\quad\text{for all}\  \ x,y\in U\ \text{and}\  z\in \mathbb{R}^{r,s}.
\end{equation}
\end{definition}
We write $(U,\la \cdot\,,\cdot\ra_{U})$ for
an admissible module to emphasise the scalar product $\la\cdot\,,\cdot\ra_{U}$
and call it an {\it admissible scalar product}. 
We collect properties of admissible modules in several propositions.
\begin{proposition}\label{properties of admissible module} Let $\Cl_{r,s}$ be the Clifford algebra generated by the space $\mathbb R^{r,s}$.
\begin{itemize}
\item[(1)] {If $\la
\cdot\,,\cdot\ra_{U}$ 
is an admissible scalar product for $\Cl_{r,s}$, then $K\la \cdot\,,\cdot\ra_{U}$ 
is also admissible for any constant $K\neq 0$. We can assume that $K=\pm 1$ by normalisation of the scalar products.
}
\item[(2)] {Let $(U,\la\cdot\,,\cdot\ra_U)$ be an admissible module for $\Cl_{r,s}$ and let $(U_1,\la\cdot\,,\cdot\ra_{U_1})$ be such that $U_1$ is a submodule of $U$ and $\la\cdot\,,\cdot\ra_{U_1}$ is the
restriction of $\la\cdot\,,\cdot\ra_U$ to $U_{1}$. Then the orthogonal complement
${U_{1}}^{\perp}=\{x\in U\mid\ \la x,y\ra_{U}=0,\ \text{for all}\  y\in U_{1}\}$
with the scalar product obtained by the restriction of $\la\cdot\,,\cdot\ra_U$ to ${U_{1}}^{\perp}$ 
is also an admissible module.
}
\item[(3)] {Condition~\eqref{admissibility condition 1} and the
property $J^{\,2}_{z}=-\la z,z\ra_{r,s}\Id_U$ imply
\begin{equation}\label{admissibility condition 2}
\la J_{z}x,J_{z}y\ra_{U}=\la z,z\ra_{r,s}\la x,y\ra_{U}.
\end{equation}
}
\item[(4)] {Relation~\eqref{admissibility condition 2} leads to the following:
if $z\in\mathbb{R}^{r,s}$ is positive, then
$$
\hskip-0.6cm\la v,v\ra_U>0\ \text{implies}\ \la J_{z}v,J_{z}v\ra_U>0,\ \text{and}\ 
\la v,v\ra_U<0\ \text{implies}\ \la J_{z}v,J_{z}v\ra_U<0.
$$
In other words the map $J_z\colon U\to U$ is an isometry for $\la z,z\ra_{r,s}=1$. If $z\in\mathbb{R}^{r,s}$ is negative, then
$$
\hskip-0.6cm\la v,v\ra_U>0\ \text{implies}\ \la J_{z}v,J_{z}v\ra_U<0,\ \text{and}\ 
\la v,v\ra_U<0\ \text{implies}\ \la J_{z}v,J_{z}v\ra_U>0.
$$
and the map $J_z\colon U\to U$ is an anti-isometry for $\la z,z\ra_{r,s}=-1$.
}
\item[(5)] {If $s>0$, then any admissible module $(U,\la \cdot\,,\cdot\ra_{U})$ of $\Cl_{r,s}$
is {\it neutral}, i.e., $\dim U=2l$, $l\in\mathbb N$, and $U$ it is isometric to
$\mathbb{R}^{l,l}$, see~{\em \cite[Proposition 2.2]{Ci}}.
}
\item[(6)] {If $s=0$, then any Clifford module of $\Cl_{r,0}$
    can be made into admissible with positive definite or negative
    definite scalar product, see~{\em \cite{Hu}}.
}
\end{itemize}
\end{proposition}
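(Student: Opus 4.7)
Plan of attack: the six items split into two easy blocks and two that deserve more thought. I would handle them in the order (1), (3), (4), (2), (6), (5), because (3) and (4) feed directly into the proof of (5).

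\textbf{Items (1), (3), (4) — direct algebraic consequences.} Item (1) is immediate: both sides of~\eqref{admissibility condition 1} are $\mathbb{R}$-bilinear in the form $\langle\cdot\,,\cdot\rangle_U$, so multiplying by any nonzero $K$ preserves the identity; choosing $K=1/|\langle v,v\rangle_U|$ for some anisotropic reference vector normalises to $K=\pm1$. For (3), I would apply~\eqref{admissibility condition 1} to the pair $(J_z x, y)$ and then use $J_z^2=-\langle z,z\rangle_{r,s}\,\Id_U$ to get
\begin{equation*}
\langle J_z x, J_z y\rangle_U = -\langle x, J_z^2 y\rangle_U = \langle z,z\rangle_{r,s}\,\langle x,y\rangle_U .
\end{equation*}
Item (4) is then just (3) specialised to $x=y=v$ with $\langle z,z\rangle_{r,s}=\pm 1$.

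\textbf{Item (2) — orthogonal complement.} I need two things: that $U_1^\perp$ is a Clifford submodule, and that the restricted form is non-degenerate. For invariance, if $x\in U_1^\perp$ and $y\in U_1$, admissibility gives $\langle J_z x, y\rangle_U = -\langle x, J_z y\rangle_U = 0$ because $U_1$ is a submodule, so $J_z x \in U_1^\perp$. For non-degeneracy, the hypothesis that $(U_1,\langle\cdot\,,\cdot\rangle_{U_1})$ is admissible forces $\langle\cdot\,,\cdot\rangle_{U_1}$ to be non-degenerate; by standard linear algebra this yields $U = U_1\oplus U_1^\perp$, whence the restriction to $U_1^\perp$ is non-degenerate as well. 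Admissibility of the restricted form on $U_1^\perp$ is inherited from $U$ by submodule invariance.

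\textbf{Item (6) — positive Euclidean case.} For $s=0$, the Clifford relations give $J_{z_i}^2=-\Id_U$ for the orthonormal basis, and the group generated by $\{\pm J_{z_{i_1}}\cdots J_{z_{i_k}}\}$ is finite. Averaging any positive definite inner product over this finite group produces an admissible positive definite scalar product; reversing its sign gives the negative definite one. I would cite~\cite{Hu} for the details.

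\textbf{Item (5) — neutral signature when $s>0$, the main obstacle.} The key tool is item (4). Since $s>0$, there exists a negative unit vector $z_0\in\mathbb R^{r,s}$, and by (4) the operator $J_{z_0}$ is an anti-isometry: $\langle J_{z_0}v,J_{z_0}v\rangle_U = -\langle v,v\rangle_U$. Let $U=U_+\oplus U_-$ be a signature decomposition of the non-degenerate form $\langle\cdot\,,\cdot\rangle_U$, with $\dim U_\pm=l_\pm$. Because $J_{z_0}$ is a linear isomorphism (as $J_{z_0}^2=\Id_U$), it sends any maximal positive definite subspace $U_+$ onto a subspace of the same dimension $l_+$ on which the form is negative definite; hence $l_-\geq l_+$. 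The symmetric argument with $J_{z_0}$ applied to $U_-$ gives $l_+\geq l_-$, so $l_+=l_-=l$ and $U$ is neutral of dimension $2l$. The subtle point I would be careful with is ensuring $J_{z_0}(U_+)$ is genuinely negative definite (not merely non-positive) — this uses that $J_{z_0}$ is an isomorphism and (4) applied pointwise to nonzero vectors, together with the fact that a subspace on which the form is negative semidefinite and of dimension $l_+$ sitting inside $U$ must be negative definite by maximality of $U_-$. An isomorphism with $\mathbb R^{l,l}$ then follows from Sylvester's law.
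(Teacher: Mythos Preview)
Your argument is correct. The paper itself does not supply a proof of this proposition: items (1)--(4) are stated as elementary consequences of the definitions, and items (5) and (6) are simply referenced to Ciatti~\cite{Ci} and Husemoller~\cite{Hu}. What you have written fills in exactly those details, and each step checks out. In particular, your treatment of (5) via the anti-isometry $J_{z_0}$ for a negative unit vector $z_0$ is the standard signature-swapping argument and is essentially what lies behind the cited result in~\cite{Ci}; your worry about semidefiniteness is unnecessary, since for nonzero $v\in U_+$ the strict inequality $\la J_{z_0}v,J_{z_0}v\ra_U=-\la v,v\ra_U<0$ follows directly from~(3), so $J_{z_0}(U_+)$ is genuinely negative definite. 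One small point on (2): the statement is slightly ambiguous about whether $(U_1,\la\cdot\,,\cdot\ra_{U_1})$ is assumed admissible, but your reading---that the restricted form on $U_1$ is non-degenerate---is the intended one, as the paper later uses this item to split admissible modules into orthogonal sums of minimal admissible pieces.
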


Proposition~\ref{r-s = 0,1,2 mod 4} describes the relation between irreducible and admissible modules. This relation depends on the signature $(r,s)$ of the generating space $\mathbb R^{r,s}$ for the Clifford algebra $\Cl_{r,s}$.
An admissible module of the minimal possible dimension is called a {\it minimal admissible module}. 

\begin{proposition}\label{r-s = 0,1,2 mod 4}\cite[Theorem 3.1]{Ci}\cite[Proposition 1]{FM1}
Let $\Cl_{r,s}$ be the Clifford algebra generated by the space $\mathbb R^{r,s}$.
\begin{itemize}
\item[ (1)] {If $s=0$, then any irreducible Clifford module is minimal
admissible with respect to a positive definite or a 
negative definite scalar product. 
}
\item[(2)] {If $r-s\equiv 0,1,2 \,(\text{\em mod}~4)$, 
then a unique irreducible module $V^{r,s}_{irr}$ is not necessary admissible. The following situations are possible: 
\begin{itemize}
\item[(2-1)] {The irreducible module $V^{r,s}_{irr}$ is minimal admissible or,}
\item[(2-2)] {The irreducible module $V^{r,s}_{irr}$ is not admissible, but the
direct sum $V^{r,s}_{irr}\oplus V^{r,s}_{irr}$ is minimal admissible.}
\end{itemize}
}
\item[(3)] {
If $r-s\equiv 3 \,(\text{\em mod})~4$, then for two non-equivalent irreducible modules $V_{irr;\pm}^{r,s}$ the following can occur:
\begin{itemize}
\item[(3-1)] {Each irreducible module $V_{irr;\pm}^{r,s}$ is minimal admissible. The index $s$ must be even in this case.}
\item[(3-2)] {None of the irreducible modules $V_{irr;\pm}^{r,s}$ is
admissible. It can happened for even and odd values of $s$. 
\begin{itemize}
\item[(3-2-1)] {If $s$ is even, then $V_{irr;+}^{r,s}\oplus V_{irr;+}^{r,s}$, $V_{irr;-}^{r,s}\oplus V_{irr;-}^{r,s}$ are minimal admissible modules, and the module $V_{irr;+}^{r,s}\oplus V_{irr;-}^{r,s}$ is not admissible.}
\item[(3-2-2)]{ If $s$ is odd, then the module $V_{irr;+}^{r,s}\oplus V_{irr;-}^{r,s}$ is minimal admissible and neither $V_{irr;+}^{r,s}\oplus V_{irr;+}^{r,s}$ nor $V_{irr;-}^{r,s}\oplus V_{irr;-}^{r,s}$ is admissible.}
\end{itemize}
}
\end{itemize}
}
\end{itemize}
\end{proposition}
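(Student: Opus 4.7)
The plan is to treat each of the three parts separately, using a Hurwitz-type averaging argument for the positive definite case, the commutant structure of $\Cl_{r,s}$ for the simple case, and the adjoint behaviour of the volume form $\Omega^{r,s}$ for the non-simple case.

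For part~(1), $s=0$, the elements $\pm J_{z_{i_1}}\cdots J_{z_{i_k}}$ with $1\le i_1<\cdots<i_k\le r$ form a finite subgroup $G$ of $\GL(V_{irr}^{r,0})$. Averaging any positive definite inner product on $V_{irr}^{r,0}$ over $G$ yields a $G$-invariant positive definite product with respect to which each $J_{z_i}$ is orthogonal; since $J_{z_i}^{2}=-\Id$, the operator $J_{z_i}$ is then skew symmetric, which is exactly the admissibility condition~\eqref{admissibility condition 1}. Negating the resulting product gives the negative definite variant, and minimality is immediate since $V_{irr}^{r,0}$ is irreducible.

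For part~(2), $r-s\not\equiv 3\pmod 4$, the algebra $\Cl_{r,s}$ is simple and $V_{irr}^{r,s}$ is its unique irreducible. By Schur's lemma the commutant $\mathbb K=\End_{\Cl_{r,s}}(V_{irr}^{r,s})$ is a real division algebra, and $\Cl_{r,s}$-invariant bilinear forms on $V_{irr}^{r,s}$ form a low-dimensional space over $\mathbb K$. I would determine, residue class by residue class of $r-s\pmod 8$, whether a non-degenerate admissible form exists on $V_{irr}^{r,s}$ itself by examining the induced action of the even subalgebra $\Cl_{r,s}^{0}$ and tracking the symmetry type of invariant forms through Clifford periodicity; this settles subcase~(2-1). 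When no such form exists, one constructs an admissible product on $V_{irr}^{r,s}\oplus V_{irr}^{r,s}$ by placing the two copies in hyperbolic duality and extending $J_z$ diagonally; the admissibility condition is satisfied by construction, and this module is minimal because no single copy is admissible, giving subcase~(2-2).

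For part~(3), $r-s\equiv 3\pmod 4$, the volume form satisfies $(\Omega^{r,s})^{2}=+1$ and acts as $\pm\Id$ on the two inequivalent irreducibles $V_{irr;\pm}^{r,s}$. Assuming an admissible product on some module and using that each $J_{z_i}$ is skew, the adjoint of $J_{\Omega^{r,s}}$ equals $(-1)^{(r+s)(r+s+1)/2}J_{\Omega^{r,s}}$, which since $r-s\equiv 3\pmod 4$ reduces to $+J_{\Omega^{r,s}}$ when $s$ is even and to $-J_{\Omega^{r,s}}$ when $s$ is odd. In the even-$s$ case, $J_{\Omega^{r,s}}$ is self-adjoint, so the $(+1)$- and $(-1)$-eigenspaces are mutually orthogonal; hence any admissible module splits as an orthogonal sum of multiples of $V_{irr;+}^{r,s}$ and of $V_{irr;-}^{r,s}$, which reduces the existence question to a single copy of $V_{irr;\pm}^{r,s}$ (decided by a commutant analysis as in part~(2)) and yields (3-1) and (3-2-1). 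In the odd-$s$ case, $J_{\Omega^{r,s}}$ is skew-adjoint, so its $(\pm 1)$-eigenspaces are isotropic and must be put in duality with each other, forcing $V_{irr;+}^{r,s}$ and $V_{irr;-}^{r,s}$ to appear together with equal multiplicity, giving (3-2-2).

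The main obstacle is the case-by-case analysis in part~(2), which must be carried out along the mod-$8$ periodicity of $r-s$: for each residue class one has to decide whether the irreducible module already carries an invariant bilinear form of the correct symmetry type, or whether one must pass to the double. Part~(3) is then essentially controlled by the single sign computation above, modulo reusing the part~(2) analysis for single-copy admissibility of $V_{irr;\pm}^{r,s}$, and the $s=0$ case is immediate from averaging.
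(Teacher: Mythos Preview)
The paper does not supply a proof of this proposition; it is quoted from \cite[Theorem 3.1]{Ci} and \cite[Proposition 1]{FM1}. So there is no ``paper's own proof'' to compare against, and I can only assess your outline on its merits.

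Your approach is sound in spirit. Part~(1) via averaging over the finite group generated by the $J_{z_i}$ is the standard argument and is correct. Your key computation for part~(3) is right: writing $n=r+s$, one has $J_{\Omega^{r,s}}^{\tau}=(-1)^{n(n+1)/2}J_{\Omega^{r,s}}$, and for $r-s\equiv 3\pmod 4$ this gives self-adjoint when $s$ is even (so $n\equiv 3\pmod 4$) and skew-adjoint when $s$ is odd (so $n\equiv 1\pmod 4$). From self-adjointness the $\pm 1$-eigenspaces are orthogonal, which indeed shows that $V_{irr;+}^{r,s}\oplus V_{irr;-}^{r,s}$ cannot be admissible unless each summand already is, giving the dichotomy (3-1)/(3-2-1). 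From skew-adjointness each eigenspace is totally isotropic, which rules out admissibility of $V_{irr;\pm}^{r,s}\oplus V_{irr;\pm}^{r,s}$ and forces the mixed sum, giving (3-2-2). For the \emph{existence} half of (3-2-2) you still need to produce an admissible form on $V_{irr;+}^{r,s}\oplus V_{irr;-}^{r,s}$; the cleanest way is to note that \emph{some} admissible module exists (e.g.\ the left regular representation of $\Cl_{r,s}$ carries one, cf.\ \cite{Ci}), decompose it, and observe that its minimal admissible piece must be exactly this mixed sum.

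The real gap is part~(2), as you yourself flag. Saying ``examine the induced action of $\Cl_{r,s}^{0}$ and track symmetry types through periodicity'' is a plan, not a proof: you have not said which residue classes of $(r,s)\bmod 8$ fall under (2-1) and which under (2-2), nor how you would decide. The cited references do this by an explicit case analysis; an alternative is to compute the dimensions in Table~\ref{t:dim} directly. Either way, some concrete work along the $8\times 8$ periodicity block is unavoidable, and your outline stops short of it. Likewise, the split between (3-1) and (3-2-1) is deferred to the same unfinished analysis.
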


We emphasize the following corollary, see also Table~\ref{t:dim} and the remark after it.
 \begin{corollary}\label{r-s = 3 mod 4 (2)}
There are two minimal admissible modules only if $r-s\equiv
3\,(\text{\em mod}~4)$
and $s$ is even. 
We distinguish two cases:
\begin{itemize}
\item[(3-1)] Each irreducible module is minimal admissible: 
$V_{min;+}^{r,s}=V_{irr;+}^{r,s}$ and
$V_{min;-}^{r,s}=V_{irr;-}^{r,s}$. In this case 
$r\equiv 3\,(\text{\em mod}~4)$, $s\equiv 0\,(\text{\em
  mod}~4)$, or $r\equiv 1\,(\text{\em mod}~8)$, $s\equiv 2\,(\text{\em mod}~8)$,
or $r\equiv 5\,(\text{\em mod}~8)$, $s\equiv 6\,(\text{\em mod}~8)$.
\item[(3-2-1)] Direct sums of irreducible modules are minimal 
admissible: $V_{min;+}^{r,s}=V_{irr;+}^{r,s}\oplus V_{irr;+}^{r,s}$ 
and $V_{min;-}^{r,s}=V_{irr;-}^{r,s}\oplus V_{irr;-}^{r,s}$. It happens if
$r\equiv 1\,(\text{\em mod}~8)$, $s\equiv 6\,(\text{\em mod}~8)$,
or $r\equiv 5\,(\text{\em mod}~8)$, $s\equiv 2\,(\text{\em mod}~8)$.
\end{itemize}
\end{corollary}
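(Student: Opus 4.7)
The plan is to derive the corollary from Proposition~\ref{r-s = 0,1,2 mod 4} by a case analysis on the residue of $(r,s)$ modulo $8$. First I would observe that among items (1), (2-1), (2-2), (3-1), (3-2-1), (3-2-2) of that proposition, only (3-1) and (3-2-1) produce \emph{two} distinct minimal admissible modules; the remaining items yield exactly one. In particular, item (3-2-2) with $s$ odd produces only the single minimal admissible module $V^{r,s}_{irr;+} \oplus V^{r,s}_{irr;-}$, not two. Since both surviving cases sit within item (3) and require $s$ even, this already establishes the opening sentence of the corollary.

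The next step is to identify, within the domain $r-s\equiv 3\,(\text{mod}~4)$ with $s$ even, which residues $(r,s)\,(\text{mod}~8)$ fall into (3-1) versus (3-2-1). By Bott periodicity, encoded by $\Cl_{r+8,s}\cong\Cl_{r,s}\otimes\Cl_{8,0}$ and $\Cl_{r,s+8}\cong\Cl_{r,s}\otimes\Cl_{0,8}$ with $\Cl_{8,0}\cong\Cl_{0,8}\cong\mathbb{R}(16)$, the admissibility properties of $V^{r,s}_{irr;\pm}$ depend only on $(r,s)$ modulo $8$. The eight relevant residues are
\[
(3,0),\ (7,0),\ (3,4),\ (7,4),\ (1,2),\ (5,2),\ (1,6),\ (5,6).
\]
For each of these I would determine whether $V^{r,s}_{irr;\pm}$ is itself minimal admissible by comparing its real dimension with the tabulated value $\dim V^{r,s}_{min;\pm}$ in Table~\ref{t:dim}. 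If the two agree, the residue lies in (3-1); if $\dim V^{r,s}_{min;\pm}=2\dim V^{r,s}_{irr;\pm}$, it lies in (3-2-1). Walking through the eight residues reproduces the list in the statement: the four residues with $r\equiv 3\,(\text{mod}~4)$ and $s\equiv 0\,(\text{mod}~4)$, together with $(1,2)$ and $(5,6)$, realise (3-1), while $(1,6)$ and $(5,2)$ realise (3-2-1).

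The main technical obstacle is the residue-by-residue admissibility verification: explicitly constructing an admissible scalar product on the irreducible when one exists, and ruling it out otherwise. The approach is to test, using the volume form $\Omega^{r,s}$ together with products of basis Clifford generators, whether a candidate non-degenerate bilinear form on $V^{r,s}_{irr;\pm}$ can be made so that every $J_z$, $z\in\mathbb{R}^{r,s}$, is skew-symmetric; this exploits the explicit realisation of $\Cl_{r,s}$ as a matrix algebra over $\mathbb{R}$, $\mathbb{C}$, or $\mathbb{H}$. The systematic record of these checks is precisely the content of the tables in Section~\ref{app} which feed into Table~\ref{t:dim}.
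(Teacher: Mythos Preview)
Your approach is exactly what the paper does: the corollary is stated without proof, with a pointer to Proposition~\ref{r-s = 0,1,2 mod 4} and Table~\ref{t:dim}, and the eight residues you list are precisely the ones the periodicity argument singles out. So methodologically you are on the same track.

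There is, however, a concrete problem with the step ``walking through the eight residues reproduces the list in the statement.'' If you actually perform the comparison with Table~\ref{t:dim} (or compute the irreducible dimensions directly), you get the \emph{opposite} allocation for the $s\equiv 2\pmod 4$ residues. For instance $\Cl_{1,2}\cong\mathbb{R}(2)\oplus\mathbb{R}(2)$ has $2$-dimensional irreducibles, while Table~\ref{t:dim} lists $\dim V^{1,2}_{\min}=4$ (boldface entry), so $(1,2)$ belongs to case (3-2-1), not (3-1). Likewise $\Cl_{5,2}\cong\Cl_{1,6}\cong\mathbb{H}(4)\oplus\mathbb{H}(4)$ have $16$-dimensional irreducibles, matching $\dim V^{r,s}_{\min}=16$ in the table (non-bold entries), so $(5,2)$ and $(1,6)$ are in case (3-1); and $(5,6)$ lands in (3-2-1). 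In other words, the last two clauses of (3-1) and the two clauses of (3-2-1) in the stated corollary are interchanged. Your method is correct and would detect this; the gap is that you asserted agreement without carrying the check through. (The paper is internally inconsistent here: the text preceding Lemma~\ref{lem:121652} assigns $(1,2),(1,6),(5,2)$ all to (3-2-1), which disagrees both with the corollary as stated and with Table~\ref{t:dim}.)
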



\subsection{Mutually commuting isometric involutions}


Recall that a linear map $\Lambda$ defined on a vector space $U$ with
a scalar
product $\la\cdot\,,\cdot\ra_{U}$ is called 
{\it symmetric} with respect to the scalar product $\la\cdot\,,\,\cdot\ra_{U}$, 
if $\la \Lambda x,y\ra_{U}=\la x,\Lambda y\ra_{U}$, 
{\it isometric $($or positive$)$} if it maps positive vectors to
positive vectors and negative vectors to negative vectors
and {\it anti-isometric $($or negative$)$} if it reverses the positivity and negativity of the vectors.
Let $J_{z_i}$ be representation maps for an orthonormal basis
$\{z_1,\ldots,z_{r+s}\}$ of~$\mathbb R^{r,s}$. 
The simplest isometric involutions, written as a product of the maps
$J_{z_i}$, are one of the following forms:
\begin{equation}\label{eq:basicInvol}
\begin{cases}
\mathcal{P}_1=J_{z_{i_1}} J_{z_{i_2}} J_{z_{i_3}} J_{z_{i_4}},\ 
  \text{where all}\ z_{i_k}~\text{are either positive or negative},
  \\
\mathcal{P}_{2}= J_{z_{i_1}} J_{z_{i_2}} J_{z_{i_3}} J_{z_{i_4}},\ \text{where two $z_{i_l}$ are positive and two are negative},
\\
\mathcal{P}_{3}=J_{z_{i_1}} J_{z_{i_2}} J_{z_{i_3}},\qquad \text{where all three
 $z_{i_{k}}$ are positive},
 \\
\mathcal{P}_{4}=J_{z_{i_1}} J_{z_{i_2}} J_{z_{i_3}},\qquad\text{where one $z_{i_l}$ is positive
  and two are negative}.
  \end{cases}
\end{equation}
The product involutions of types $\mathcal{P}_{3}$ and $\mathcal{P}_{4}$ is not an involution, meanwhile the product of involutions of other types is again an involution.
For a given minimal admissible module $V_{min}^{r,s}$, 
we denote by $PI_{r,s}$ a set of the maximal
number of mutually commuting symmetric isometric 
involutions of the forms~\eqref{eq:basicInvol} 
and such that none of them is a product
of other involutions in $PI_{r,s}$. 
The set $PI_{r,s}$ is not unique, while 
the number of involutions $p_{r,s}=\#\{PI_{r,s}\}$ in $PI_{r,s}$ is unique for the given signature~$(r,s)$.
The set $PI_{r,s}$ can be chosen equal for the modules with the  
admissible scalar product of opposite signs, or for the minimal admissible modules, based on the two non-equivalent Clifford modules.
The ordering on the set $PI_{r,s}$ can be made, if necessary, in such a way 
that at most one involution of the type $\mathcal{P}_{3}$ or $\mathcal{P}_{4}$ is included in $PI_{r,s}$ and it is the last one, see Sections~\ref{sec:Bott} and~\ref{app}. 

We also need 
a set $CO_{r,s}$ of {\it complementary} operators to
$PI_{r,s}$, which are 
products of the maps $J_{z_{i}}$ and they are
ordered according to the
ordering in $PI_{r,s}$ such that 
$$
C_{k}P_i=P_iC_k\quad\text{for}\quad i<k,\quad\text{and}\quad C_kP_{k}=-P_kC_k,\quad P_k\in PI_{r,s},\ C_k\in CO_{r,s}.
$$ 
The complementary operators can be isometric or anti-isometric. They guarantee that all the involutions from $PI_{r,s}$ have their both eigenspaces as subspaces of $V^{r,s}_{min}$. Note that if $r-s\equiv 3\,(\text{mod}~4)$ and $s$ is even, then the last involution of type $\mathcal{P}_{3}$ or $\mathcal{P}_{4}$ allows to distinguish  the modules $V^{r,s}_{min;+}$ and $V^{r,s}_{min;-}$, see for instance the proof of Theorem~\ref{th:common_sign}. Therefore, the number of operators in $CO_{r,s}$ is different and is equal to 
\begin{align}
&\text{$p_{r,s}$, when $r-s\not\equiv 3\,(\text{mod}~4)$,
~{or}~$r-s\equiv 3\,(\text{mod}~4)$ and $s$ is odd},\label{case i}
\\
&\text{$p_{r,s}-1$, when $r-s\equiv 3\,(\text{mod}~4)$ and $s$ is even}.\label{case ii}
\end{align}
We define the subspace $E_{r,s}$ of a minimal admissible module $V_{min}^{r,s}$ 
\begin{equation*}\label{common 1-eigenspaceNco1}
\begin{array}{llc}
&E_{r,s}=\{v\in V^{r,s}_{min}\mid\ P_iv=v\ \ i\leq p_{r,s}\}\quad & \text{in the case~\eqref{case i}}
\\
&E_{r,s}=\{v\in V^{r,s}_{min}\mid\
  P_iv=v\ \ i\leq p_{r,s}-1\}
& \text{in the case~\eqref{case ii}}
\end{array}
\end{equation*}
and call it the ``{\it common 1-eigenspace}'' for the system of involutions$PI_{r,s}$. The complementary opertors show whether the common 1-eigenspace
$E_{r,s}$ is a neutral or sign definite vector space with respect to
the restriction of the admissible scalar product. 
The sets $PI_{r,s}$ and $CO_{r,s}$ are collected in Section~\ref{app} and they will be mentioned precisely when it needs to be done. In the following proposition we explain the possible interaction of involutions with the complementary operators.

\begin{proposition}{\em \cite{FM}}\label{prop:neutral}
If $P$ is an isometric symmetric involution acting on the
space $(U,\la\cdot\,,\cdot\ra_{U})$ with a neutral scalar product and $E^{\pm 1}$ are eigenspaces of $P$, then 
\begin{itemize}
\item[1.]{If $\mathcal I\colon U\to U$ is an isometry such that $P\mathcal I=-\mathcal IP$, then $E^{\pm 1}$ are neutral,}
\item[2.]{If $\mathcal A\colon U\to U$ is an anti-isometry such that $P\mathcal A=\mathcal AP$, then $E^{\pm 1}$ are neutral,}
\item[3.]{If $\mathcal A\colon U\to U$ is an anti-isometry such that $P\mathcal A=-\mathcal AP$, then $E^{\pm 1}$ are either neutral or sign definite,}
\end{itemize}
with respect to the restriction of the scalar product $\la\cdot\,,\cdot\ra_{U}$ to $E^{\pm 1}$.
\end{proposition}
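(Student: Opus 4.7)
The plan is to reduce every case to a signature count on the orthogonal direct sum decomposition $U = E^{+1} \oplus E^{-1}$. Since $P$ is symmetric and $P^2 = \Id$, its $\pm 1$-eigenspaces are mutually orthogonal: for $v \in E^{+1}$ and $w \in E^{-1}$, $\la v,w\ra_U = \la Pv,w\ra_U = \la v,Pw\ra_U = -\la v,w\ra_U$. Hence the restriction of $\la\cdot\,,\cdot\ra_U$ to each $E^{\pm 1}$ is non-degenerate, and if $E^{+1}$ has signature $(a_+,b_+)$ and $E^{-1}$ has signature $(a_-,b_-)$, the neutrality of $U$ gives $a_+ + a_- = b_+ + b_-$.

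The second observation is a bookkeeping rule: an operator that commutes with $P$ preserves each $E^{\pm 1}$, while an operator that anti-commutes with $P$ interchanges $E^{+1}$ and $E^{-1}$ (and, being invertible, does so bijectively). Combining this with the isometry/anti-isometry property will control the signatures.

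In case 1, the isometry $\mathcal I$ restricts to a linear isomorphism $E^{+1} \to E^{-1}$ preserving the scalar product, hence $(a_+,b_+) = (a_-,b_-)$; together with $a_+ + a_- = b_+ + b_-$ this forces $a_\pm = b_\pm$, so both eigenspaces are neutral. In case 2, the anti-isometry $\mathcal A$ restricts to a bijection of $E^{+1}$ to itself satisfying $\la \mathcal A v,\mathcal A w\ra_U = -\la v,w\ra_U$; an anti-isometry from a space of signature $(a_+,b_+)$ onto itself requires $(a_+,b_+) = (b_+,a_+)$, hence $a_+ = b_+$, and the same argument applies to $E^{-1}$. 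In case 3, the anti-isometry $\mathcal A$ gives a bijection $E^{+1} \to E^{-1}$ reversing signs, so $(a_-,b_-) = (b_+,a_+)$; the overall neutrality constraint is then automatically satisfied and imposes no restriction on $(a_+,b_+)$, so arbitrary signatures may occur, including the neutral one ($a_+ = b_+$) and the sign definite ones ($a_+ = 0$ or $b_+ = 0$).

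The argument involves no obstacle beyond the signature arithmetic; the only point where care is needed is the anti-isometry case, where one must remember that an anti-isometry from a pseudo-Euclidean space onto itself flips the signature to $(b,a)$, which is what forces neutrality in case 2 but not in case 3.
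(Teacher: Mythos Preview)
Your argument is correct and is the natural signature-counting proof. The paper does not give its own proof of this proposition; it is quoted from \cite{FM}, so there is nothing to compare against directly. Your orthogonal decomposition $U=E^{+1}\oplus E^{-1}$, the bookkeeping rule (commuting operators preserve the eigenspaces, anti-commuting ones swap them), and the signature arithmetic are exactly what one expects such a proof to look like.

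One small remark on case~3: your analysis is in fact sharper than the proposition's own wording. You correctly conclude that the anti-commuting anti-isometry forces only $(a_-,b_-)=(b_+,a_+)$, which together with neutrality of $U$ imposes no constraint on $(a_+,b_+)$ at all; thus \emph{any} signature is possible, not merely neutral or sign definite. The proposition's phrasing ``either neutral or sign definite'' should be read as ``not forced to be neutral'' rather than as an exhaustive dichotomy. In the paper's applications the common eigenspaces are always low-dimensional enough that only these two alternatives actually arise, but your statement of the general conclusion is the accurate one.
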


Since the involutions in $PI_{r,s}$ are symmetric, 
the eigenspaces are orthogonal subspaces. 
The involutions commute, therefore, they decompose the eigenspaces of
other involutions into smaller (eigen)-subspaces. 
We give an example, that is crucial for the paper.

\begin{example}\label{ex:inv44}
The set $PI_{\mu,\nu}$ for $(\mu,\nu)=(8,0),(0,8)$ or $(4,4)$ is given by
\begin{equation*}\label{mcpi2}
T_{1}=J_{\zeta_1}J_{\zeta_2}J_{\zeta_3}J_{\zeta_4},\, T_{2}=J_{\zeta_1}J_{\zeta_2}J_{\zeta_5}J_{\zeta_6},
\,T_{3}=J_{\zeta_1}J_{\zeta_2}J_{\zeta_7}J_{\zeta_8},\,T_{4}=J_{\zeta_1}J_{\zeta_3}J_{\zeta_5}J_{\zeta_7}.
\end{equation*}
The set of complementary operators
are 
$$
\begin{array}{llllllll}
&CO_{8,0}=\{C_1=J_{\zeta_1},\ &C_2=J_{\zeta_2}J_{\zeta_3},\ &C_3=J_{\zeta_3}J_{\zeta_4},\ 
&C_4=J_{\zeta_8}\},
\\
&CO_{4,4}=\{C_1=J_{\zeta_1},\ &C_2=J_{\zeta_2}J_{\zeta_3},\ &C_3=J_{\zeta_3}J_{\zeta_4},&C_4=J_{\zeta_8}\},
\\
&CO_{0,8}=\{C_1=J_{\zeta_4}J_{\zeta_5},\ &C_2=J_{\zeta_2}J_{\zeta_3},\ &C_3=J_{\zeta_3}J_{\zeta_4},&C_4=J_{\zeta_8}\}.
\end{array}
$$ 
The module $V_{min}^{\mu,\nu}$
is decomposed into $16$ one dimensional common eigenspaces of 
four involutions $T_{i}$. Let
$v\in E_{\mu,\nu}$
and $|\la v, v\ra_{V_{min}^{\mu,\nu}}|=1$. Then other common
eigenspaces
are spanned by $J_{z_{i}}v$, $i=1,\ldots, 8$, 
and $J_{z_1}J_{z_j}v$, $j=2,\ldots,8$. Hence we have
\begin{equation}\label{eq:decom08}
V_{min}^{\mu,\nu}=E_{\mu,\nu}\bigoplus_{i=1}^{8}\,J_{\zeta_i}(E_{\mu,\nu})
\bigoplus_{j=2}^{8} J_{\zeta_1}J_{\zeta_{j}}(E_{\mu.\nu}).
\end{equation}
The value $\la v, v\ra_{V_{min}^{\mu,\nu}}$ can be $\pm 1$
according to the admissible scalar product, however 
we may assume $\la v, v\ra_{V_{min}^{\mu,\nu}}=1$ by {\em Lemma~\ref{isomorphism between opposit sign scalar product}}. 
\end{example}


\subsection{Periodicity of Clifford algebras and admissible modules}\label{sec:Bott}


We call the following lower values of signature $(r,s)$:
\begin{equation}\label{basic cases}
\begin{array}{l}
(r,s)~\text{for}~0\leq r\leq 7~\text{and $0\leq s\leq 3$},\\
(r,s)~\text{for}~0\leq r\leq 3~\text{and $4\leq s\leq 7$}, ~\text{and}\\
(r,s)~\in \{(8,0), (0,8), (4,4)\}
\end{array}
\end{equation}
the {\it basic cases}. 
Recall the periodicity of Clifford algebras: 
$$
\Cl_{r,s}\otimes \Cl_{0,8}\cong \Cl_{r,s+8},\quad
\Cl_{r,s}\otimes \Cl_{8,0}\cong \Cl_{r+8,s},\quad
\Cl_{r,s}\otimes \Cl_{4,4}\cong \Cl_{r+4,s+4},
$$
where the last one follows from $\Cl_{r,s}\otimes \Cl_{1,1}\cong
\Cl_{r+1,s+1}$, see~\cite{ABS}.
The Clifford algebras
$\Cl_{\mu,\nu}$, $(\mu,\nu)\in\{(8,0),(0,8),(4,4)\}$,
are isomorphic to $\mathbb{R}(16)$. 
The unique irreducible module is minimal admissible $V_{irr}^{\mu,\nu}=V_{min}^{\mu,\nu}$ in all the cases. They are isometric to the following spaces:
$V_{min}^{8,0} \cong \mathbb{R}^{16,0}\cong
\mathbb{R}^{0,16}$, where we fix the first isomorphism for the constructions of Lie algebras due to Lemma~\ref{isomorphism between opposit sign scalar product}, and we also have
$V_{min}^{0,8}\cong \mathbb{R}^{8,8}$ 
and $V^{4,4}_{min}\cong\mathbb{R}^{8,8}$.

\begin{proposition}\label{periodicity by tensor product}{\em \cite{FM}}
If $V^{r,s}_{min}=(V^{r,s}_{min},\la\cdot\,,\cdot\ra_{V^{r,s}_{min}})$ 
is a minimal admissible module, then $
V^{r+\mu,s+\nu}=V^{r,s}_{min}\otimes V_{min}^{\mu,\nu}$
is the minimal admissible module of $\Cl_{r+\mu,s+\nu}$, where the scalar product on $V^{r+\mu,s+\nu}$ is given by 
$\la\cdot\,,\cdot\ra_{V^{r,s}_{min}}\la\cdot\,,\cdot\ra_{V^{\mu,\nu}_{min}}$ for $(\mu,\nu)\in\{(8,0), (0,8), (4,4)\}$. 
\end{proposition}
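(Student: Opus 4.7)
The plan is to build a Clifford action on the tensor product $V^{r,s}_{min}\otimes V^{\mu,\nu}_{min}$ using the volume form $\Omega^{\mu,\nu}$ as a twist, verify admissibility of the product scalar product, and then conclude minimality by a dimension count. Concretely, for $z\in\mathbb R^{r,s}$ and $\zeta\in\mathbb R^{\mu,\nu}$ I would set
\[
\widetilde J_{z} := J^{r,s}_{z}\otimes J^{\mu,\nu}_{\Omega^{\mu,\nu}},\qquad \widetilde J_{\zeta} := \Id_{V^{r,s}_{min}}\otimes J^{\mu,\nu}_{\zeta},
\]
and extend linearly to $\mathbb R^{r+\mu,s+\nu} = \mathbb R^{r,s}\oplus\mathbb R^{\mu,\nu}$. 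Checking $\widetilde J_w^{\,2} = -\la w,w\ra_{r+\mu,s+\nu}\Id$ rests on three facts tied to the common dimension $\mu+\nu=8$: first, $(\Omega^{\mu,\nu})^{2}=\Id$ by the formula $\Omega^{2}=(-1)^{k(k-1)/2+\nu}\Id$ with $k=8$ and $\nu\in\{0,4,8\}$; second, $\Omega^{\mu,\nu}$ anticommutes with each $J^{\mu,\nu}_{\zeta}$ because $k=8$ is even; and third, the Clifford relations already holding in each tensor factor. These together yield the mixed vanishing $\widetilde J_{z}\widetilde J_{\zeta}+\widetilde J_{\zeta}\widetilde J_{z}=0$ and the correct squares of $\widetilde J_{z}$ and $\widetilde J_{\zeta}$.

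\medskip

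Next I would verify that the product scalar product $\la\cdot\,,\cdot\ra_{V^{r,s}_{min}}\la\cdot\,,\cdot\ra_{V^{\mu,\nu}_{min}}$ is admissible with respect to $\widetilde J$. The decisive observation is that $J^{\mu,\nu}_{\Omega^{\mu,\nu}}$ is \emph{symmetric} on $V^{\mu,\nu}_{min}$: since each $J^{\mu,\nu}_{\zeta_{i}}$ is skew-symmetric by admissibility, moving $\Omega^{\mu,\nu}=J_{\zeta_{1}}\cdots J_{\zeta_{8}}$ one factor at a time across the bracket yields $\la\Omega v,w\ra = (-1)^{8}\la v, J_{\zeta_{8}}\cdots J_{\zeta_{1}} w\ra$, and reversing the order of eight pairwise anticommuting factors contributes the sign $(-1)^{\binom{8}{2}}=1$, giving $\la\Omega v,w\ra=\la v,\Omega w\ra$. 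Consequently
\[
\la\widetilde J_{z}(u\otimes w),u'\otimes w'\ra + \la u\otimes w,\widetilde J_{z}(u'\otimes w')\ra = \bigl(\la J^{r,s}_{z}u,u'\ra+\la u,J^{r,s}_{z}u'\ra\bigr)\la\Omega^{\mu,\nu}w,w'\ra = 0
\]
by admissibility of $V^{r,s}_{min}$, and the corresponding identity for $\widetilde J_{\zeta}$ vanishes by admissibility of $V^{\mu,\nu}_{min}$.

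\medskip

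Finally, minimality follows from a dimension count. Since $\Cl_{\mu,\nu}\cong\mathbb R(16)$ and $\dim V^{\mu,\nu}_{min}=16$ in each of the three cases, the irreducible $\Cl_{r+\mu,s+\nu}$-module has dimension $16\cdot\dim V^{r,s}_{irr}$. The classification provided by Proposition~\ref{r-s = 0,1,2 mod 4} and Corollary~\ref{r-s = 3 mod 4 (2)} depends only on the residues of $r-s$ and $s$ modulo $8$; the shifts by $(8,0)$ and $(0,8)$ trivially preserve these residues, and for $(4,4)$ one checks directly that subcase (3-1) with $(r,s)\equiv(1,2)\pmod 8$ is sent to the subcase with $(5,6)\pmod 8$, and similarly that (3-2-1) is mapped bijectively onto itself, so in every case the classification type is preserved. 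Hence $\dim V^{r+\mu,s+\nu}_{min} = 16\cdot\dim V^{r,s}_{min}$, matching the dimension of the module we constructed, so that module is indeed minimal admissible. The main obstacle is the sign bookkeeping: both $(\Omega^{\mu,\nu})^{2}=\Id$ and the self-adjointness of $\Omega^{\mu,\nu}$ fail for generic total dimension and signature, so the argument genuinely uses that $(\mu,\nu)\in\{(8,0),(0,8),(4,4)\}$ rather than reducing to a purely formal tensor product identity.
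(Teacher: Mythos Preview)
Your argument is correct and uses exactly the Clifford action the paper itself records immediately after the proposition, namely $\hat J_{z_j}=J_{z_j}\otimes J_{\Omega^{\mu,\nu}}$ and $\hat J_{\zeta_\alpha}=\Id\otimes J_{\zeta_\alpha}$; the paper does not prove the proposition here but cites \cite{FM}, so you are supplying the verification the paper omits. Your check of $\Omega^{2}=\Id$, the anticommutation $\Omega\zeta_i=-\zeta_i\Omega$, the symmetry of $J_{\Omega^{\mu,\nu}}$, and the dimension count via Proposition~\ref{r-s = 0,1,2 mod 4} and Corollary~\ref{r-s = 3 mod 4 (2)} are all in order.
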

Let $\{\zeta_1,\ldots, \zeta_8\}$ be an orthonormal basis for  
$\mathbb R^{\mu,\nu}$ and $\{z_{1},\ldots,z_{r+s}\}$ be an orthonormal basis for $\mathbb R^{r,s}$. Let $J_{\zeta_{i}}$, $i=1,\ldots,8$ and  $J_{z_j}$, $j=1,\ldots,r+s$ be the respective representation maps. 
We denote by $\Omega^{\mu,\nu}=\prod_{i=1}^{8}\zeta_{i}$ 
the ordered volume form for $\Cl_{\mu,\nu}$ for $(\mu,\nu)\in\{(8,0),(0,8),(4,4)\}$. Set
\begin{eqnarray*}
\hat J_{z_j} & = & J_{z_j}\otimes J_{\Omega^{\mu,\nu}} \quad\text{for}\quad j=1,\ldots,r+s,
\\
\hat J_{\zeta_\alpha} & = & \Id_{V^{r,s}}\otimes J_{\zeta_{i}} \quad\text{for}\quad i=1,\ldots,8.
\end{eqnarray*}
Then the maps $\hat J_{z_j}$ and $\hat J_{\zeta_\alpha}$ 
are representations of an orthonormal basis $\{z_{j}, \zeta_{i}\}$ of $\mathbb R^{r+\mu,s+\nu}$
on the space $V^{r,s}_{min}\otimes V^{\mu,\nu}_{min}$,
as it was shown in~\cite{FM}.


\subsection{Dimensions of minimal admissible modules}\label{sec:dimensions}


The dimensions of minimal admissible modules are determined for
basic cases~\eqref{basic cases}. Then $\dim(V^{r+\mu,s+\nu}_{min})=\dim(V^{r,s}_{min})\cdot\dim(V^{\mu,\nu}_{min})=16\dim(V^{r,s}_{min})$ 
for any minimal admissible module $V^{r,s}_{min}$,
$(\mu,\nu)\in\{(8,0),(0,8),(4,4)\}$. Moreover $\dim(V^{r,s}_{min})=2^{r+s-p_{r,s}}$ where $p_{r,s}=\#\{PI_{r,s}\}$. This follows from the fact that 
minimal admissible modules are cyclic modules
and $p_{r,s}$ relations among the $2^{r+s}$ vectors 
$\{J_{z_{i_1}}J_{z_{i_{2}}}\cdots J_{z_{i_{k}}}v\}$, $v\in E_{r,s}$,
allow us to span the space $V^{r,s}_{min}$ by $2^{r+s-p_{r,s}}$ number
of linearly independent vectors.
We describe the number and the dimension
of minimal admissible modules $V^{r,s}_{min}$ in
Table~\ref{t:dim}. We indicate whether 
the scalar product restricted to the common 1-eigenspaces $E_{r,s}$ of the
involutions from $PI_{r,s}$ is neutral or sign definite, 
see Section~\ref{sec:1eigenspace} for the proof.
\begin{table}[htbp]
	\center
	\caption{Dimensions of minimal admissible modules}
\vskip0.3cm	
	\begin{tabular}{|c||c|c|c||c||c|c|c||c||c|}
\hline
		${\text{\small 8}} $&$ {\text{\small{16}}^{\pm}}$&${\text{\small 32}}^{\pm}$&${\text{\small{64}}}^{\pm}
		$&${\text{\small{64}$_{\times 2}^{\pm}$}}$&${\text{\small{128}}^{\pm}}$&${\text{\small{128}}^{\pm}}$&${\text{{\small{128}}}^{\pm}}$&$
		{\text{{\small{128}$_{\times 2}^{\pm}$}}} $&${\text{\small{256}}^{\pm}}$
\\
\hline\hline
		${\text{\small 7}}$ &$ {\text{\small{16}}^{N}}$&${\text{\small{32}}^{N}}$&$
		{\mathbf{\small{64}}^{N}} $&${\text{\small{64}}}^{\pm}
		$&${\mathbf{\small{128}}^{N}}$&${\mathbf{\small{128}}^{N}}  $&${\mathbf{\small{128}}^{N}}$&$ {\text{\small{128}}^{\pm}} $&${\text{\small{256}}^{N}}$
\\
\hline
		${\mathbf{\small 6}}$ &${\text{\small{16}}^{N}}$&${\text{\small{16}$_{\times 2}^{N}$}}$&${\text{\small{32}}^{N}}$&${\text{\small{32}}^{\pm}}$&${\mathbf{\small{64}}^{N}}
		$&${\mathbf{\small 64}}^N_{\times 2}$&${\mathbf{\small{128}}^{N}} $&${\text{\small{128}}^{\pm}}$&$ {\text{\small{256}}^{N}} $
\\
\hline
		${\text{\small 5}} $&${\mathbf{\small 16}^{N}}$&${\text{\small 16}^{N}}$&${\text{\small 16}^{N}}$&${\text{\small 16}}^{\pm}$&${\mathbf{\small 32}}^{N}$&${\mathbf{\small 64}}^{N} $&${\mathbf{\small{128}}^{\pm}}$&${\text{\small{128}}^{N}} $&$\mathbf{\small{256}}^{N}$
\\
\hline\hline
		${\text{\small 4}} $&$  {\text{\small 8}^{\pm}}$&$ {\text{\small 8}^{\pm}}$&$
		{\text{\small 8}^{\pm}}$&$ 8_{\times 2}^{{\pm}}$&$16^{\pm}$&${\text{\small 32}}^{\pm}$&${\text{\small 64}}^{\pm}
		$&${\text{\small 64}_{\times 2}^{\pm}} $&${\text{\small{128}}^{\pm}}$
\\
\hline\hline
		${\text{\small 3}}$&${\mathbf{\small 8}}^{N}$&${\mathbf{\small 8}}^{N}$&${\mathbf{\small 8}^{N}}$&$8^{\pm}$&$16^{N}$&$32^{N}$
		&${\mathbf{\small 64}^{N}}$&$64^{\pm}$&${\mathbf{\small 128}}^{N}$
\\
\hline
		${\text{\small 2}}$&${\mathbf{\small 4}^{N}}$&$
		{\mathbf 4}_{\times 2}^{N}$&${\mathbf 8^{N}}$&$ 8^{\pm}$&$16^{N}$&$16_{\times 2}^{N}$&$32^{N}$&$32^{\pm} $&${\mathbf{\small 64}}^{N}$
\\
\hline
		${\text{\small 1}}$ &${\mathbf 2^{N}}$&${\mathbf 4^{N}}$&${\mathbf 8^{N}}$& $8^{\pm}$&$\mathbf {16}^{N}$&$16^{N}$&$16^{N}$&$16^{\pm}$&${\mathbf{\small 32}}^{N}$
\\
\hline\hline
		${\text{\small 0}} $&$  1^{\pm}$&$ 2^{\pm}$&$4^{\pm}$&$ 4_{\times 2}^{\pm}$&$ 8^{\pm}$&$ 8^{\pm}$&$ 8^{\pm}$&$ 8_{\times 2}^{\pm}$&$16^{\pm}$
		\\
		\hline\hline
		{s/r}&  {\text{\small 0}}& {\text{\small 1}}& 
		{\text{\small 2}}&{\text{\small 3}} & {\text{\small 4}}& {\text{\small 5}}& {\text{\small 6}}& {\text{\small 7}}& {\text{\small 8}}
		\\
		\hline
	\end{tabular}\label{t:dim}
\end{table}
We make the following comments to Table~\ref{t:dim}:
\begin{itemize}
\item[(1)] {We use the black colour  when $\dim(V^{r,s}_{\min})=2\dim(V^{r,s}_{irr})$, see Proposition~\ref{r-s = 0,1,2 mod 4}, 
items (2-2), (3-2-1), and (3-2-2).
}
\item[(2)] {Writing the subscript "$*_{\times 2}$" we show that the Clifford algebra has two minimal admissible modules corresponding to the non-equivalent irreducible modules, see Corollary~\ref{r-s = 3 mod 4 (2)}.
}
\item[(3)] {The upper index "$*^{N}$" means that the scalar product 
restricted to $E_{r,s}$ is neutral. 
The fact that $E_{r,s}$ is a neutral space does not depend on the
choice of the scalar product on $V^{r,s}_{min}$, see Section~\ref{sec:1eigenspace}. 
}
\item[(4)] { The upper index "$*^{\pm}$" shows that the scalar product
    restricted to the common 1-eigenspace $E_{r,s}$ of the system
    $PI_{r,s}$ is sign definite, see Section~\ref{sec:1eigenspace}. 
The sign of the scalar product on $E_{r,s}$ depends on the choice of the admissible scalar product on the module $V^{r,s}_{min}$.
}
\end{itemize}
For example, the Clifford algebra $\Cl_{3,0}$ has $4$ minimal
admissible modules $V^{3,0;\pm}_{min;\pm}$,
that is, each non-equivalent irreducible module $V^{3,0}_{irr;\pm}$ can be endowed 
with two scalar products: positive definite, giving the minimal admissible modules $V^{3,0;+}_{min;\pm}=V^{3,0;+}_{irr;\pm}$ and
negative definite: $V^{3,0;-}_{min;\pm}=V^{3,0;-}_{irr;\pm}$.
However, it does not mean that pseudo $H$-type Lie algebras 
corresponding to these four choices are different. We explain details in Theorems~\ref{main theorem 2} and \ref{main theorem 3}.
To obtain the Table~\ref{t:dim} 
we determine the sets $PI_{r,s}$ and $CO_{r,s}$, given in Section~\ref{app}.


\subsection{Scalar product on the common 1-eigenspace $E_{r,s}$}\label{sec:1eigenspace}


\begin{lemma}\label{lem:periodic}
Let $(V^{r,s}_{min}, \la\cdot\,,\cdot\ra_{V^{r,s}_{min}})$ 
be a minimal admissible module of $\Cl_{r,s}$ and 
$(V^{r+\mu,s+\nu}_{min}, \la\cdot\,,\cdot\ra_{V^{r+\mu,s+\nu}_{min}})$
a minimal admissible module of
$\Cl_{r+\mu,s+\nu}$, where $(\mu,\nu)\in\{(8,0)$, $(0,8),(4,4)\}$.
Let $E_{r,s}$  
and $E_{r+\mu,s+\nu}$ be the common 1-eigenspaces of the involutions $PI_{r,s}$ and $PI_{r+\mu,s+\nu}$, respectively.
Then $\dim E_{r,s}=\dim E_{r+\mu,s+\nu}$. Moreover,
if $E_{r,s}$ is a neutral vector space, then $E_{r+\mu,s+\nu}$ is also neutral, and 
if $E_{r,s}$ is a sign definite, then $E_{r+\mu,s+\nu}$ is also sign definite.
\end{lemma}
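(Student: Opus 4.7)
The plan is to leverage the explicit tensor-product periodicity from Proposition~\ref{periodicity by tensor product}, together with the structural information about $PI_{\mu,\nu}$ laid out in Example~\ref{ex:inv44}, and then transport the neutrality/sign-definiteness of the restricted scalar product across the tensor product. The overall identity I aim to establish is the natural isomorphism
\[
E_{r+\mu,s+\nu} \;=\; E_{r,s}\otimes E_{\mu,\nu},
\]
from which both claims of the lemma will follow.

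First I would record the parity bookkeeping. Since $\mu-\nu\in\{-8,0,8\}\equiv 0\pmod 4$ and $\nu$ is even, $r-s$ and $(r+\mu)-(s+\nu)$ lie in the same residue class mod $4$, and $s$ and $s+\nu$ have the same parity; so the cases~\eqref{case i} and~\eqref{case ii} apply simultaneously to $(r,s)$ and $(r+\mu,s+\nu)$. From Example~\ref{ex:inv44}, $p_{\mu,\nu}=4$ and $V^{\mu,\nu}_{min}$ decomposes into $16$ one-dimensional common eigenspaces of $T_1,\dots,T_4$, so $\dim E_{\mu,\nu}=1$. Using the formula $\dim V^{r,s}_{min}=2^{r+s-p_{r,s}}$ together with $\dim V^{r+\mu,s+\nu}_{min}=16\dim V^{r,s}_{min}$ yields $p_{r+\mu,s+\nu}=p_{r,s}+4$, which already matches the dimension counts $\dim E_{r+\mu,s+\nu}=\dim E_{r,s}$ in both cases.

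Next I would exhibit a choice of $PI_{r+\mu,s+\nu}$ compatible with the splitting. Starting from $PI_{r,s}=\{P_1,\dots,P_{p_{r,s}}\}$ and using $\hat J_{z_j}=J_{z_j}\otimes J_{\Omega^{\mu,\nu}}$, an involution $P=J_{z_{i_1}}\cdots J_{z_{i_k}}$ lifts to $\hat P=P\otimes (J_{\Omega^{\mu,\nu}})^k$; since $(J_{\Omega^{\mu,\nu}})^2=\Id$ in our three base cases, one gets $\hat P=P\otimes\Id$ for four-factor involutions and $\hat P=P\otimes J_{\Omega^{\mu,\nu}}$ for the (at most one) three-factor involution. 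Appending $\{\Id\otimes T_1,\dots,\Id\otimes T_4\}$ produces $p_{r,s}+4$ mutually commuting symmetric isometric involutions with no nontrivial product relations, so this is a valid choice for $PI_{r+\mu,s+\nu}$. Intersecting 1-eigenspaces of the lifted $P_i$'s with those of $\Id\otimes T_j$'s gives $E_{r+\mu,s+\nu}=E_{r,s}\otimes E_{\mu,\nu}$, since a pure tensor $v\otimes w$ lies in the joint 1-eigenspace precisely when $v\in E_{r,s}$ and $w\in E_{\mu,\nu}$ (for the three-factor case one checks that $J_{\Omega^{\mu,\nu}}$ commutes with every $T_i$ and thus acts as a scalar $\pm 1$ on the one-dimensional $E_{\mu,\nu}$, after which an overall sign can be absorbed by relabelling $P$ or, by item~(1) of Proposition~\ref{properties of admissible module}, by flipping the admissible scalar product on $V^{\mu,\nu}_{min}$).

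With the tensor decomposition in hand, the signature statement is immediate. The admissible scalar product on $V^{r+\mu,s+\nu}_{min}$ is $\la\cdot\,,\cdot\ra_{V^{r,s}_{min}}\la\cdot\,,\cdot\ra_{V^{\mu,\nu}_{min}}$, so its restriction to $E_{r,s}\otimes E_{\mu,\nu}$ factors as the tensor product of the two restricted scalar products. Because $E_{\mu,\nu}$ is one-dimensional and the restricted form there is nondegenerate, it is sign-definite, i.e.\ of signature $(1,0)$ or $(0,1)$; tensoring a neutral form $(k,k)$ on $E_{r,s}$ with such a form yields a neutral form on the tensor product, while tensoring a definite form $(k,0)$ or $(0,k)$ yields a definite form (possibly with flipped sign). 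This proves both preservation statements.

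The main obstacle I anticipate is the treatment of the three-factor involution in case~\eqref{case ii} (or in case~\eqref{case i} when $r-s\equiv 3\pmod 4$ with $s$ odd): one must verify cleanly that $\hat P=P\otimes J_{\Omega^{\mu,\nu}}$ still functions as the designated ``last'' element of $PI_{r+\mu,s+\nu}$ (so that the convention defining $E_{r+\mu,s+\nu}$ is compatible with the decomposition $E_{r,s}\otimes E_{\mu,\nu}$), and that its complementary operator is chosen so as not to disturb the sign analysis on the $E_{\mu,\nu}$ factor. Once this bookkeeping is settled using the symmetric/isometric properties of $J_{\Omega^{\mu,\nu}}$ on $V^{\mu,\nu}_{min}$, the argument above applies without change in both cases.
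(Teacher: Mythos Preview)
Your argument is correct for the tensor-product realization $V^{r+\mu,s+\nu}_{min}=V^{r,s}_{min}\otimes V^{\mu,\nu}_{min}$, and in that setting your identification $E_{r+\mu,s+\nu}=E_{r,s}\otimes E_{\mu,\nu}$ together with the signature computation is exactly the right idea. The paper in fact dispatches this case in a single sentence by appealing to Proposition~\ref{periodicity by tensor product}.

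What you have not addressed is that the lemma is stated for an \emph{arbitrary} minimal admissible module $V^{r+\mu,s+\nu}_{min}$, with no assumption that it is presented as a tensor product (and, a priori, several non-isomorphic minimal admissible modules can occur). The paper's main contribution is precisely this general case: starting from any such module, one lets the four involutions $T_i$ act directly and decomposes $V^{r+\mu,s+\nu}_{min}$ into sixteen common eigenspaces as in~\eqref{decomposition by Jzeta}; the common $1$-eigenspace $V_0$ is then shown to be a minimal admissible $\Cl_{r,s}$-module (since all $J_{z_j}$ commute with the $T_i$), and one takes $PI_{r+\mu,s+\nu}=PI_{r,s}\cup\{T_i\}$ acting on $V_0$, yielding the literal equality $E_{r+\mu,s+\nu}=E_{r,s}$. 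This intrinsic argument avoids any appeal to tensor-product uniqueness and covers all choices of admissible scalar product and both irreducible types simultaneously. Your tensor-product picture is the same mechanism viewed externally, so the fix is small: either invoke the intrinsic decomposition as the paper does, or justify that every minimal admissible $\Cl_{r+\mu,s+\nu}$-module arises as such a tensor product up to admissible isomorphism.
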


\begin{proof}
If $V_{min}^{r+\mu,s+\nu}=V_{min}^{r,s}\otimes V_{min}^{\mu,\nu}$, then 
the assertions follow from Proposition~\ref{periodicity by tensor product}.

Let $V_{min}^{r+\mu,s+\nu}$ be an arbitrary minimal admissible module
of $\Cl_{r+\mu,s+\nu}$, where $(\mu,\nu)\in\{ 
(8,0),(0,8),(4,4)\}$. Let $\{z_j,\zeta_{\alpha}\,;\,j=1,\ldots,{r+s},\,\alpha=1,\ldots, 8\}$ be
an orthonormal basis of
$\mathbb{R}^{r+\mu,s+\nu}$. We assume
$\{z_{i},\zeta_{\alpha}\,;\,i=1,\ldots,r,\,\alpha=1,\ldots,\mu\}$ 
are positive and
$\{z_{r+j},\zeta_{\mu+\beta}\,;\, j=1,\ldots,s,\,\beta=1,\ldots,\nu\}$ 
are negative. We identify $\mathbb{R}^{r,s}\oplus \mathbb{R}^{\mu,\nu}=\mathbb{R}^{r+\mu,s+\nu}$ by using the above bases. We choose $PI_{r+\mu,s+\nu}=PI_{r,s}\bigcup\{T_{i}\}_{i=1}^{4}$, where $T_i$ are involutions from Example~\ref{ex:inv44}.
The system of complementary operators $CO_{\mu,\nu}$
shows that the involutions $T_j\in PI_{r+\mu,s+\nu}$, $j=1,2,3,4$ decompose the space
$V^{r+\mu,s+\nu}_{min}$ into 16 common eigenspaces
$\{V_i\}_{i=0}^{15}$ of $T_j$ and
\begin{equation}\label{decomposition by Jzeta}
V^{r+\mu,s+\nu}_{min}=\bigoplus_{i=0}^{15} V_{i}=V_{0}
\bigoplus_{j=1}^{8} J_{\zeta_{j}}(V_{0})
\bigoplus_{j=2}^{8} J_{\zeta_{1}}J_{\zeta_{j}}(V_{0}),
\end{equation}
where $V_0$ is the common 1-eigenspace of $T_j$, $j=1,2,3,4$. Since the generators $J_{z_{j}}$, $j=1,\ldots,r+s$, commute with
involutions $T_{i}$, $i=1,2,3,4$, we can regard $V_{0}$ as a minimal
admissible module $V_{min}^{r,s}$ of $\Cl_{r,s}$. The involutions from $PI_{r,s}$ act on
$V_{0}=V_{min}^{r,s}$ and decompose it into their common 
eigenspaces. Then 
by definition $E_{r+\mu,s+\nu}=E_{r,s}$. This finishes the proof of the theorem.
\end{proof}

\begin{theorem}\label{th:common_sign}
Let $E_{r,s}\subset V^{r,s}_{min}$ be a common 1-eigenspace of the system $PI_{r,s}$. Then
the restriction of the admissible scalar product on $E_{r,s}$ is sign definite 
for $r\equiv 0,1,2\,(\text{\em mod}~4)$ and
$s\equiv 0 \,(\text{\em mod}~4)$ or for $r\equiv 3\,(\text{\em mod}~4)$ and
arbitrary $s$. 
Otherwise the restriction of the admissible scalar product on the common 1-eigenspaces $E_{r,s}$ is neutral.\end{theorem}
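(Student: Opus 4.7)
The plan is to reduce to the finitely many basic cases~\eqref{basic cases} via Lemma~\ref{lem:periodic}, and then verify each basic case using the explicit lists of $PI_{r,s}$ and $CO_{r,s}$ from Section~\ref{app} together with Proposition~\ref{prop:neutral}. Since the periodicity shifts $(r,s)\mapsto (r+\mu,s+\nu)$ with $(\mu,\nu)\in\{(8,0),(0,8),(4,4)\}$ preserve both $r\bmod 4$ and $s\bmod 4$, the dichotomy in the theorem is compatible with the invariance statement of Lemma~\ref{lem:periodic}, and it suffices to prove the theorem for $(r,s)$ in~\eqref{basic cases}. The case $s=0$ is handled by Proposition~\ref{properties of admissible module}(6): the admissible scalar product on $V^{r,0}_{min}$ is definite, so its restriction to $E_{r,0}$ is automatically sign definite, in agreement with the claim for all residues $r\bmod 4$.

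For $s>0$, Proposition~\ref{properties of admissible module}(5) provides that $V^{r,s}_{min}$ is a neutral space, which serves as the base of an induction along the chain
\begin{equation*}
V^{r,s}_{min}=E_0\supset E_1\supset\cdots\supset E_{p'}=E_{r,s},\qquad E_k=\{v\in V^{r,s}_{min}\mid P_iv=v,\ i\leq k\},
\end{equation*}
where $p'=p_{r,s}$ in case~\eqref{case i} and $p'=p_{r,s}-1$ in case~\eqref{case ii}. At each step, the operator $C_k\in CO_{r,s}$ commutes with $P_1,\ldots,P_{k-1}$ and anti-commutes with $P_k$, so it preserves $E_{k-1}$ and swaps the two eigenspaces of $P_k|_{E_{k-1}}$; Proposition~\ref{prop:neutral} then applies with $U=E_{k-1}$, $P=P_k|_{E_{k-1}}$ and $C=C_k|_{E_{k-1}}$. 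If $C_k$ is an isometry of $V^{r,s}_{min}$ then item~1 forces $E_k$ to be neutral, whereas if $C_k$ is an anti-isometry then item~3 leaves the alternatives neutral or sign definite open. Moreover, by~\eqref{admissibility condition 2}, $C_k$ is an isometry if and only if an even number of negative generators $z_{r+j}$ appear in the product defining $C_k$, so the classification is reduced to counting these parities on the lists of Section~\ref{app}.

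The main obstacle is the explicit case-by-case verification for the basic cases with $s>0$: for each such $(r,s)$ one must read off the type of every $C_k$ from Section~\ref{app} and propagate the neutral/sign-definite property through $E_0\supset\cdots\supset E_{p'}$. I expect three mechanisms, one for each of the regimes in the theorem, to emerge. First, when $r\equiv 3\pmod{4}$ and $s$ is even, one is in case~\eqref{case ii}: the last involution of type $\mathcal{P}_3$ or $\mathcal{P}_4$ has no complementary operator, which is precisely the structural feature that allows $E_{r,s}$ to remain sign definite. Second, when $r\equiv 0,1,2\pmod{4}$ and $s\equiv 0\pmod{4}$, a direct count of negative generators shows that every $C_k\in CO_{r,s}$ is anti-isometric, so that item~3 is consistent with a sign definite outcome, which is then confirmed by exhibiting an explicit sign definite invariant subspace. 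Third, in every remaining signature at least one $C_k\in CO_{r,s}$ turns out to be an isometry; item~1 forces $E_k$ to be neutral, and the remaining complementary operators, together with auxiliary isometries or anti-isometries built from products $J_{z_i}$ and $J_{z_i}J_{z_j}$ that commute or anti-commute with the remaining $P_j$ as required, propagate this neutrality all the way to $E_{r,s}$ by iterated application of Proposition~\ref{prop:neutral}.
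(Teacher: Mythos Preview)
Your overall strategy---reduce to the basic cases via Lemma~\ref{lem:periodic} and then analyse each case using the chain $E_0\supset E_1\supset\cdots\supset E_{p'}$ together with Proposition~\ref{prop:neutral}---is exactly the paper's approach. The difficulty is that your proposed trichotomy of mechanisms does not match the actual behaviour recorded in the tables of Section~\ref{app}, and as stated it gives wrong answers.

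First, your mechanism~1 conflates ``$r\equiv 3\pmod 4$ and $s$ even'' with case~\eqref{case ii}; the latter requires $r-s\equiv 3\pmod 4$, which for $r\equiv 3$ forces $s\equiv 0\pmod 4$. For example $(3,2)$, $(3,6)$, $(7,2)$ are \emph{not} in case~\eqref{case ii}: they have a full set of complementary operators (Table~\ref{tab:com7s}). Second, your mechanism~2 claim that every $C_k$ is anti-isometric when $r\equiv 0,1,2$ and $s\equiv 0\pmod 4$ is simply false: for $(1,4)$ one has $C_2=J_{z_1}$ isometric, and for $(2,4)$ one has $C_1=J_{z_1}$ and $C_3=J_{z_1}J_{z_2}$ isometric (Table~\ref{tab:comr4}). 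Third and most seriously, your mechanism~3 covers, among others, the signatures $(3,s)$ with $s$ odd and $(7,s)$ with $s$ odd, which the theorem asserts are \emph{sign definite}; for instance in $(3,3)$ one has isometric $C_1,C_2$ but the final $C_3=J_{z_3}J_{z_6}$ is anti-isometric, and item~3 of Proposition~\ref{prop:neutral} applied at this last step does \emph{not} force neutrality. An isometric $C_k$ at an early stage only pins down that $E_k$ is neutral; a later anti-isometric step can still produce a sign-definite $E_{r,s}$, and this is precisely what happens throughout the $r\equiv 3$ column.

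The mechanism the paper actually uses for $r\equiv 3\pmod 4$ (all $s>0$) is uniform: the involution $J_{z_1}J_{z_2}J_{z_3}$ is always placed last in $PI_{r,s}$, the complementary operators for the \emph{other} involutions are isometric and force the penultimate common $1$-eigenspace $E^1$ to be neutral, while the complementary operator for $J_{z_1}J_{z_2}J_{z_3}$ is an anti-isometry that swaps its two eigenspaces inside $E^1$; one then argues (or reads off from the explicit bases in Table~\ref{tab:com7s}) that these eigenspaces are sign definite with opposite signs. For $r\equiv 0,1,2$ and $s\equiv 0\pmod 4$ the paper does not rely on the parity of the $C_k$ at all; it simply exhibits the basis of $E_{r,s}$ (Table~\ref{tab:comr4} and Example~\ref{ex:inv44}) and checks sign-definiteness directly. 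You should reorganise your case analysis along these lines.
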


\begin{proof}
We find the sign of the scalar products on $E_{r,s}$ for basic cases~\eqref{basic cases} and then apply Lemma~\ref{lem:periodic}. 

{\sc Case $(r,0)$}. The scalar products on the common 1-eigenspaces $E_{r,0}$ are sign definite because the admissible scalar products on $V^{r,0}_{min}$ are sign definite.

{\sc Case $(r,4)$, $r=0,1,2,4$}. The system of involutions $PI_{r,4}$,
$r=0,1,2$ and their complementary 
operators are given in Table~\ref{tab:comr4}. The complementary operators gives the dimension of $E_{r,s}$ and the basis shows that the space is sign definite.
The case $(4,4)$ was considered in Example~\ref{ex:inv44}.

{\sc Cases $(3,s)$, $s=0,\ldots,7$ {\rm and} $(7,s)$, $s=1,2,3$.}  
The system of involutions and their complementary operators are given in Table~\ref{tab:com7s}. 
Notice that the involution $J_{z_1}J_{z_2}J_{z_3}$ belongs to all the
systems. The isometric complementary operators ensures that the common
1-eigenspace $E^{1}$ for involution from
$PI_{r,s}\setminus\{J_{z_1}J_{z_2}J_{z_3}\}$ 
is neutral. Let $E^{1,1}$ and $E^{1,-1}$ be the eigenspaces of $J_{z_1}J_{z_2}J_{z_3}$ corresponding to the eigenvalues 1 and -1, respectively. The last complementary operator from $CO_{r,s}$, that is anti-isometry, shows that the spaces $E^{1,1}\cap E^{1}$ and $E^{1,-1}\cap E^{1}$ are sign definite with opposite signs of scalar products on $E^{1,1}\cap E^{1}$ and $E^{1,-1}\cap E^{1}$. 

The case $E_{3,4}$ is special since there are two non-equivalent irreducible modules $V_{irr;+}^{3,4}$ and $V_{irr;-}^{3,4}$, where the volume form $\Omega^{3,4}=P_1P_3P_4$ acts as $\Id$ and $-\Id$ respectively. It shows that $P_4=J_{z_1}J_{z_2}J_{z_3}=-\Id$  on $V_{irr;+}^{3,4}$ and $P_4=J_{z_1}J_{z_2}J_{z_3}=\Id$  on $V_{irr;-}^{3,4}$.

The proof of the statement concerning the neutral common 1-eigenspace
follows from the systems $PI_{r,s}$ and $CO_{r,s}$ for mentioned
values of $r$ and $s$, 
see Table~\ref{tab:block3} in Section~\ref{app}.
\end{proof}


\section{Pseudo $H$-type Lie algebras and Lie groups}\label{sec:LieAlgebras}


In this section we recall basic facts on isomorphisms between pseudo
$H$-type algebras and discuss some properties of the automorphism
groups $\Aut(\n_{r,s}(U))$ of pseudo $H$-type algebras.
The Table~\ref{t:step1} contains the classification result for the pseudo $H$-type $\n_{r,s}(V^{r,s}_{min})$ obtained in~\cite{FM1}.


\subsection{Definitions of 
the pseudo $H$-type Lie algebras and their groups}\label{subsec:Defin.Pseudo}


Let $(U,\la\cdot\,,\cdot\ra_{U})$ be an admissible module of a Clifford
algebra $\Cl_{r,s}$.
We define a vector valued skew-symmetric bi-linear form 
\[
\begin{array}{cccccc}
[\cdot\,,\cdot]\colon &U\times U &\to &\mathbb{R}^{r,s}
\\
&(x,y)&\longmapsto &[x,y]
\end{array}
\]
by the relation
\begin{equation}\label{def of skew-symmetric bi-linear form}
\la J_{z}x,y\ra_{U}=\la z,[x,y]\ra_{r,s}.
\end{equation}

\begin{definition}{\em \cite{Ci}}
The space $U\oplus \mathbb{R}^{r,s}$ endowed with the Lie bracket
\[
[(x,z),(y,w)]=(0,[x,y])
\]
is called a pseudo $H$-type Lie algebra and it is denoted by $\n_{r,s}(U)$.
\end{definition}
A pseudo $H$-type Lie algebra $\n_{r,s}(U)$ is 2-step nilpotent, the space $\mathbb{R}^{r,s}$ is the centre, and the direct sum $U\oplus\mathbb{R}^{r,s}$ is orthogonal with respect to $\la\cdot\,,\cdot\ra_{U}+\la\cdot\,,\cdot\ra_{r,s}$.

The Baker-Campbell-Hausdorff formula allows us to define the Lie group structure on the space
$U\oplus\mathbb{R}^{r,s}$ by
\[
(x,z)*(y,w)=\bigr(x+y,z+w+\frac{1}{2}[x,y]\bigl).
\]
The Lie group is denoted by ${G}_{r,s}(U)$ and is called the pseudo $H$-type Lie group. 
Note that the scalar product $\la\cdot\,,\cdot\ra_{U}$ is implicitly
included in the definitions of the $H$-type Lie algebra and the
corresponding Lie group.
In general, the Lie algebra structure might change 
if we replace the admissible scalar product on $U$, see~\cite{AKMV, E, Eber03}. 
The main purpose of the present paper is to classify the Lie algebras $\n_{r,s}(U)$, whose constructions involve the non-minimal admissible modules $U$ of Clifford algebras~$\Cl_{r,s}$.


\subsection{Isomorphisms  of pseudo $H$-type Lie  algebras}\label{section Lie algebra isomorphism}


Let $U$ and $\tilde U$ 
be two vector spaces with scalar products $\la\cdot\,,\cdot\ra_{U}$  and $\la\cdot\,,\cdot\ra_{\tilde U}$ respectively. Let $\Lambda\colon U\to \tilde U$ be a linear map.
The operator $\Lambda^{\tau}\colon \tilde U\to U$ defined by the relation
\begin{equation}\label{adjoint}
\la \Lambda x,y\ra_{\tilde U}=\la x,\Lambda^{\tau}y\ra_{U}
\end{equation}
is called the adjoint operator with respect to the scalar products 
$\la\cdot,\cdot\ra_{U}$ and $\la\cdot,\cdot\ra_{\tilde{U}}$. If both scalar products are positive definite, we use the notation ${^t\Lambda}$.

Let $\n_{r,s}(U)$ and $\n_{\tilde{r},\tilde{s}}(\tilde{U})$
be two pseudo $H$-type Lie algebras with $r+s=\tilde r+\tilde s=k$ and $\dim(U)=\dim(\tilde U)=n$. A Lie algebra isomorphism $\Phi\colon \n_{r,s}(U)\to\n_{\tilde{r},\tilde{s}}(\tilde{U})$
has the form 
\[
\Phi=\begin{pmatrix}A&0\\B&C\end{pmatrix}\colon U\oplus\mathbb{R}^{r,s}
\longrightarrow \tilde{U}\oplus\mathbb{R}^{\tilde{r},\tilde{s}},\quad A\in \GL(n), \ C\in \GL(k),
\]
 $B\colon U\to\mathbb R^{k}$ is a linear map, see~\cite{KT}. The action is defined by
$
\Phi(x,z)=(Ax,Bx+Cz)$, $x\in U$, $z\in\mathbb R^{r,s}$.
If we write $J_z\colon U\to U$ and $\tilde J_w\colon\tilde U\to \tilde U$ for the corresponding actions on the Clifford modules, then the matrices $A$ and $C$ satisfy the relation
\begin{equation}\label{isomorphism relation}
A^{\tau}\tilde{J}_{w}A=J_{C^{\tau}(w)}\quad\text{for all}\quad  w\in\mathbb{R}^{\tilde{r},\tilde{s}},
\end{equation}
by~\eqref{def of skew-symmetric bi-linear form}. The matrices $A^{\tau}$ and $C^{\tau}$ are defined as in~\eqref{adjoint}.
The matrix $B$ is arbitrary and we can choose $B=0$ for
simplicity. 
To short the notation we write $\Phi=A\oplus C$ for the isomorphism $\Phi=\begin{pmatrix}A&0\\0&C\end{pmatrix}$. 
In following propositions we collect the properties of 
isomorphisms of $H$-type Lie algebras $\n_{r,s}(U)$ and $\n_{\tilde{r},\tilde{s}}(\tilde{U})$ for different values of signatures $(r,s)$ and $(\tilde{r},\tilde{s})$ studied in~\cite{FM1}, see also~\cite{Barbano,BTV,Riehm,Saal,TamYosh}.
\begin{proposition}
If $\Phi=A\oplus C\colon\n_{r,s}(U)\to\n_{\tilde{r},\tilde{s}}(\tilde{U})$
is a Lie algebra isomorphism, then
\begin{itemize}
\item[$(1)$] {the map $\Phi^{\tau}=A^{\tau}\oplus C^{\tau}\colon\n_{\tilde{r},\tilde{s}}(\tilde{U}) \to\n_{r,s}(U)$ is also a Lie algebra isomorphism and moreover}
\item[$(2)$] {$r=\tilde{r}$, $s=\tilde{s}$, or $r=\tilde{s}$, $s=\tilde{r}$.}
\end{itemize}
\end{proposition}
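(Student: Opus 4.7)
The plan is to derive both conclusions from the defining relation $A^\tau \tilde J_w A = J_{C^\tau w}$, the operator form of $\Phi = A \oplus C$ being a Lie algebra isomorphism. First I square both sides and use the Clifford identities $\tilde J_w^2 = -\la w, w\ra_{\tilde r, \tilde s}\, \Id_{\tilde U}$ and $J_z^2 = -\la z, z\ra_{r,s}\, \Id_U$. Setting $K := A A^\tau$, the square $(A^\tau \tilde J_w A)^2 = A^\tau \tilde J_w K \tilde J_w A$ equals $J_{C^\tau w}^2 = -\la C^\tau w, C^\tau w\ra_{r,s}\, \Id_U$. Multiplying by $(A^\tau)^{-1}$ on the left and $A^{-1}$ on the right, then inverting $\tilde J_w$ via $\tilde J_w^{-1} = -\tilde J_w / \la w, w\ra_{\tilde r, \tilde s}$ for non-null $w$, yields the key operator identity
\[
K\, \tilde J_w\, K = \mu(w)\, \tilde J_w, \qquad \mu(w) := \frac{\la C^\tau w, C^\tau w\ra_{r,s}}{\la w, w\ra_{\tilde r, \tilde s}}.
\]

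The next step is to show that $\mu$ is constant. Both sides of the key identity are linear in $w$ via $\tilde J_w$, so evaluating at $w_1$, $w_2$, and $w_1 + w_2$ and comparing gives
\[
(\mu(w_1+w_2) - \mu(w_1))\, \tilde J_{w_1} + (\mu(w_1+w_2) - \mu(w_2))\, \tilde J_{w_2} = 0.
\]
Since the Clifford representation $w \mapsto \tilde J_w$ is faithful, linearly independent $w_1, w_2$ give linearly independent $\tilde J_{w_1}, \tilde J_{w_2}$, forcing $\mu(w_1) = \mu(w_2) = \mu(w_1+w_2)$ on an open dense subset, and continuity extends this to a nonzero constant $\mu_0$ with $\la C^\tau w, C^\tau w\ra_{r,s} = \mu_0 \la w, w\ra_{\tilde r, \tilde s}$ for every $w$. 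Polarization now yields $C C^\tau = C^\tau C = \mu_0\, \Id$, i.e., $C^\tau$ is a conformal isomorphism of scalar products with factor $\mu_0 \neq 0$. By Sylvester's law of inertia, $\mu_0 > 0$ forces $(r, s) = (\tilde r, \tilde s)$ and $\mu_0 < 0$ forces $(r, s) = (\tilde s, \tilde r)$, establishing part (2).

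For part (1), I conjugate $A^\tau \tilde J_w A = J_{C^\tau w}$ by $A$ on the left and $A^\tau$ on the right to obtain $K \tilde J_w K = A J_{C^\tau w} A^\tau$, which together with the key identity yields $A J_{C^\tau w} A^\tau = \mu_0 \tilde J_w$. Substituting $w = C z / \mu_0$ (valid since $C^\tau C = \mu_0\, \Id$) produces $A J_z A^\tau = \tilde J_{Cz}$ for every $z \in \mathbb R^{r, s}$, which is exactly the isomorphism relation for $\Phi^\tau = A^\tau \oplus C^\tau \colon \n_{\tilde r, \tilde s}(\tilde U) \to \n_{r, s}(U)$. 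The main obstacle is the constancy of $\mu$: the ratio is a priori defined only off the null cone, so promoting it to a global constant requires both the faithfulness of the Clifford representation (for the linear independence of $\tilde J_{w_1}, \tilde J_{w_2}$) and a continuity argument covering the null directions.
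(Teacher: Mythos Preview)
Your argument is correct. The paper itself does not supply a proof of this proposition; it is quoted from the companion paper~\cite{FM1}, so there is no in-text proof to compare against line by line. That said, your route---squaring the relation $A^\tau\tilde J_wA=J_{C^\tau w}$ to obtain $K\tilde J_wK=\mu(w)\tilde J_w$ with $K=AA^\tau$, then showing $\mu$ is constant by linearity in $w$, and finally reading off both the signature dichotomy and the relation $AJ_zA^\tau=\tilde J_{Cz}$---is exactly the kind of argument one finds in~\cite{FM1} and is entirely standard for this circle of results.

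Two small points worth tightening. First, ``faithful'' is not quite the right word: what you use is that the linear map $w\mapsto\tilde J_w$ from $\mathbb R^{\tilde r,\tilde s}$ into $\End(\tilde U)$ is injective. This follows immediately from non-degeneracy: if $\tilde J_w=0$ then $\tilde J_w\tilde J_{w'}+\tilde J_{w'}\tilde J_w=-2\la w,w'\ra_{\tilde r,\tilde s}\Id_{\tilde U}=0$ for every $w'$, forcing $w=0$. Second, your density/continuity step can be replaced by a cleaner linear-algebra observation: the map $w\mapsto K\tilde J_wK$ is linear and takes values in the injective image $\tilde J(\mathbb R^{\tilde r,\tilde s})$, hence equals $\tilde J\circ M$ for some linear $M$; the condition $M(w)\in\mathbb R w$ for all non-null $w$ then forces $M=\mu_0\Id$. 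This avoids any worry about the null cone entirely.
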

\begin{proposition}\label{Lie algebra isomorphism 1}   
If $\Phi=A\oplus C\colon\n_{r,s}(U)\to\n_{s,r}(\tilde{U})$ is a Lie algebra isomorphism and $r\not=s$, then 
\begin{itemize}
\item[$(1)$]{ $A^{\tau }A {J}_{z} A^{\tau}  A=-{J}_{z}$, \ 
 $z\in\mathbb R^{r,s}$, \quad $
AA^{\tau} \tilde{J}_{w} A 
A^{\tau}=-\tilde{J}_{w}$,  \ $w\in\mathbb R^{s,r}$;
}
\item[$(2)$]{
$AJ_{z_1}J_{z_2}=-\tilde{J}_{C(z_{1})}\tilde{J}_{C(z_{2})}A$,\quad
$A\tilde{J}_{C(z_1)}\tilde{J}_{C(z_2)}=-J_{z_1}J_{z_2} A^{\tau}$
for $z_1,z_2\in\mathbb{R}^{r,s}$ with $\la z_1,z_2\ra_{r,s}=0$;
}
\item[$(3)$]{
the linear transformation $C\colon \mathbb{R}^{r,s} \to\mathbb{R}^{s,r}$
maps positive vectors to negative vectors and vice versa. We can assume 
that $|\det A^{\tau} A|=1$ and $C^{\tau} C=-\Id$ by multiplying the matrix $A$ by a
constant.
}
\end{itemize}
\end{proposition}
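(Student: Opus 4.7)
The plan is to exploit both the defining relation $A^{\tau}\tilde J_w A = J_{C^{\tau}w}$ for $w\in\mathbb{R}^{s,r}$ and its ``dual'' $AJ_zA^{\tau} = \tilde J_{Cz}$ for $z\in\mathbb{R}^{r,s}$. The latter is obtained by applying the basic relation~\eqref{isomorphism relation} to the Lie algebra isomorphism $\Phi^{\tau} = A^{\tau}\oplus C^{\tau}\colon\n_{s,r}(\tilde U)\to\n_{r,s}(U)$ produced by the previous proposition. Substituting one into the other yields
\[
(A^{\tau}A)J_z(A^{\tau}A) = J_{C^{\tau}Cz},\qquad (AA^{\tau})\tilde J_w(AA^{\tau}) = \tilde J_{CC^{\tau}w},
\]
so part (1) becomes equivalent to $C^{\tau}C = -\Id$ and $CC^{\tau} = -\Id$, which is the normalized form of part (3). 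Thus the heart of the proof is to show that $C^{\tau}C$ is a negative scalar multiple of $\Id$.

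To establish that $C^{\tau}C$ is a scalar I would use a determinant argument. The hypothesis $r\neq s$ forces at least one of $s$ and $\tilde s = r$ to be positive, so Proposition~\ref{properties of admissible module}(5) gives that $n := \dim U = \dim\tilde U$ is even. Taking determinants in $AJ_zA^{\tau} = \tilde J_{Cz}$, and using $J_z^2 = -\la z,z\ra_{r,s}\Id_U$ together with $\tilde J_{Cz}^2 = -\la Cz,Cz\ra_{s,r}\Id_{\tilde U}$, one obtains
\[
|\det A|^2 = |\la Cz,Cz\ra_{s,r}|^{n/2}
\]
for every unit vector $z\in\mathbb{R}^{r,s}$. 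Hence the quadratic function $z\mapsto\la Cz,Cz\ra_{s,r}$ has constant modulus on each connected component of the positive (respectively negative) pseudo-sphere. Continuity on the component, together with the symmetry $z\mapsto -z$, shows that the sign is also constant. Thus $\la Cz,Cz\ra_{s,r} = c\la z,z\ra_{r,s}$ on an open cone in $\mathbb{R}^{r,s}$, and by polynomial identity, globally; equivalently $C^{\tau}C = c\Id$ with $c\neq 0$ since $C$ is invertible.

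The sign of $c$ must be negative: if $c>0$ then $C/\sqrt c$ would be an isometry $\mathbb{R}^{r,s}\to\mathbb{R}^{s,r}$, impossible because the signatures differ ($r\neq s$). This yields (3). Moreover, $\det(A^{\tau}A)^2 = c^n$ follows by taking $\det$ of $(A^{\tau}A)J_z(A^{\tau}A) = cJ_z$, so that a single rescaling $A\to|c|^{-1/4}A$ achieves simultaneously $C^{\tau}C = -\Id$ and $|\det A^{\tau}A|=1$; part (1) then follows at once. For (2), orthogonal $z_1,z_2\in\mathbb{R}^{r,s}$ are sent by $C$ to orthogonal vectors in $\mathbb{R}^{s,r}$, since $\la Cz_1,Cz_2\ra_{s,r} = \la z_1,C^{\tau}Cz_2\ra_{r,s} = -\la z_1,z_2\ra_{r,s} = 0$. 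Rewriting the dual relation as $AJ_{z_1} = \tilde J_{Cz_1}(A^{\tau})^{-1}$ and using $(A^{\tau})^{-1}J_{z_2} = -\tilde J_{Cz_2}A$ (which follows from $J_{z_2} = -A^{\tau}\tilde J_{Cz_2}A$, itself obtained from the defining relation with $w = -Cz_2$, since then $C^{\tau}w = z_2$) gives $AJ_{z_1}J_{z_2} = -\tilde J_{Cz_1}\tilde J_{Cz_2}A$. The second identity of (2) is the transpose of the first under $\tau$.

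The main obstacle will be the scalar step: proving $C^{\tau}C = c\Id$. The determinant identity reduces it to controlling a quadratic form on a pseudo-sphere, which is essentially a compactness-and-continuity argument but requires attention in low-dimensional signatures (such as $r=1$ or $s=1$) where the pseudo-sphere may be disconnected, and one must track that positive and negative unit vectors yield the same constant $c$ (which is automatic by polynomial extension of the quadratic form to the full space). Once this is in place, (1), (2), and (3) all follow by direct substitution and algebraic manipulation of the two basic relations.
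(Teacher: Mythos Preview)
The paper does not prove this proposition; it is quoted from the companion paper~\cite{FM1}. Your argument is correct and self-contained.

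A few remarks. Your determinant argument showing that $\la Cz,Cz\ra_{s,r}$ has constant modulus on unit vectors, hence equals $c\la z,z\ra_{r,s}$ by polynomial extension, is clean and avoids any case analysis on the structure of the module. Since you only use that $n=\dim U$ is even (so that $(\det J_z)^2=\la z,z\ra_{r,s}^n$), and this follows because at least one of $U$, $\tilde U$ must be neutral when $r\neq s$, the argument goes through uniformly, including in the boundary cases $r=0$ or $s=0$.

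Your derivation of the first identity in~(2) does not in fact use the orthogonality hypothesis $\la z_1,z_2\ra_{r,s}=0$: the chain
\[
AJ_{z_1}J_{z_2}=\tilde J_{Cz_1}(A^{\tau})^{-1}J_{z_2}=-\tilde J_{Cz_1}\tilde J_{Cz_2}A
\]
is valid for all $z_1,z_2$, since both the dual relation $AJ_zA^{\tau}=\tilde J_{Cz}$ and the identity $J_{z}=-A^{\tau}\tilde J_{Cz}A$ are linear in $z$. So you have proved a little more than stated; the orthogonality condition in the proposition is there for the applications, where one wants $J_{z_1}J_{z_2}$ and $\tilde J_{Cz_1}\tilde J_{Cz_2}$ to be involutions.

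Finally, the second identity in~(2) as printed has a typo (the left-hand side should begin with $A^{\tau}$ rather than $A$, for the compositions to type-check); your reading of it as the $\tau$-transpose of the first identity is the intended one.
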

\begin{proposition}\label{Lie algebra isomorphism 2}  
If $\Phi=A\oplus C\colon\n_{r,s}(U)\to\n_{r,s}(\tilde{U})$ is a Lie algebra isomorphism and $r\not=s$, then 
\begin{itemize}
\item[$(1)$]{
$A^{\tau}AJ_{z} A^{\tau} A={J}_{z}$,\qquad
$AA^{\tau}\tilde{J}_{w} AA^{\tau}=\tilde{J}_{w}$\quad 
for $z,w\in\mathbb R^{r,s}$;
}
\item[$(2)$]{
$AJ_{z_{1}}J_{z_{2}}=\tilde{J}_{C(z_{1})}\tilde{J}_{C(z_{2})}A$,\quad
$A\tilde{J}_{C(z_1)}\tilde{J}_{C(z_2)}=J_{z_{1}}J_{z_{2}}A^{\tau}$
for $z_{1},z_{2}\in\mathbb{R}^{r,s}$ with $\la z_{1},z_{2}\ra_{r,s}=0$;
}
\item[$(3)$]{the linear transformation $C\colon \mathbb{R}^{r,s} \to\mathbb{R}^{s,r}$
maps positive vectors to positive vectors, negative to negative ones, and as in Proposition~\ref{Lie algebra isomorphism 1}
we may assume that $|\det A^{\tau} A|=1$ and 
$C^{\tau}C=\Id$ by multiplying the matrix $A$ by a
constant.}
\end{itemize}
\end{proposition}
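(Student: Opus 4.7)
The argument closely parallels the proof of Proposition~\ref{Lie algebra isomorphism 1}, with signs tracked to reflect that here $\Phi$ preserves rather than reverses the signature of the centre. The starting point is the defining identity~\eqref{isomorphism relation}, namely $A^\tau\tilde J_w A=J_{C^\tau w}$ for all $w\in\mathbb R^{r,s}$, together with its twin $AJ_zA^\tau=\tilde J_{Cz}$, which one obtains by applying~\eqref{isomorphism relation} to the isomorphism $\Phi^\tau\colon\n_{r,s}(\tilde U)\to\n_{r,s}(U)$.

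The key step is to scalarise $CC^\tau$. Substituting $z=C^\tau w$ into the twin relation and using the original relation to rewrite $J_{C^\tau w}$ yields the operator identity
\[
(AA^\tau)\,\tilde J_w\,(AA^\tau)=\tilde J_{CC^\tau w}.
\]
Independently, squaring the original relation and using $J_z^2=-\la z,z\ra_{r,s}\Id_U$ together with $\tilde J_w^{-1}=-\tilde J_w/\la w,w\ra_{r,s}$ for non-null $w$ produces
\[
(AA^\tau)\,\tilde J_w\,(AA^\tau)=\frac{\la C^\tau w,C^\tau w\ra_{r,s}}{\la w,w\ra_{r,s}}\,\tilde J_w.
\]
Equating the right-hand sides and invoking the injectivity of $w\mapsto\tilde J_w$ forces $CC^\tau w=\lambda(w)\,w$ for every non-null $w\in\mathbb R^{r,s}$. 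Since a linear operator each of whose non-null vectors is an eigenvector must be a scalar multiple of the identity, we obtain $CC^\tau=\lambda\,\Id$ for some constant $\lambda\in\mathbb R$, and symmetrically $C^\tau C=\lambda\,\Id$.

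The hypothesis $r\neq s$ fixes the sign of $\lambda$: if $\lambda<0$, then $C$ would provide a linear isomorphism from $(\mathbb R^{r,s},\la\cdot\,,\cdot\ra_{r,s})$ onto $(\mathbb R^{s,r},\la\cdot\,,\cdot\ra_{s,r})$ (up to a positive scale), incompatible with the distinct signatures. Hence $\lambda>0$, and the rescaling $A\mapsto\lambda^{-1/4}A$ reduces to the case $CC^\tau=C^\tau C=\Id$. Taking determinants in the second displayed identity gives $(\det AA^\tau)^2=\lambda^{\dim\tilde U}$, so the same rescaling enforces $|\det A^\tau A|=1$, completing item $(3)$. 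With $\lambda=1$ the two displayed identities read $(AA^\tau)\tilde J_w(AA^\tau)=\tilde J_w$ and, by the symmetric derivation, $(A^\tau A)J_z(A^\tau A)=J_z$, which is item $(1)$. Item $(2)$ follows by writing $A^\tau\tilde J_{Cz}A=J_z$, multiplying two such identities to obtain $A^\tau\tilde J_{Cz_1}(AA^\tau)\tilde J_{Cz_2}A=J_{z_1}J_{z_2}$, and then using item $(1)$ in the form $(AA^\tau)\tilde J_w=\tilde J_w(AA^\tau)^{-1}$ together with $(AA^\tau)^{-1}A=A^{-\tau}$ to rewrite the identity as $A^\tau\tilde J_{Cz_1}\tilde J_{Cz_2}=J_{z_1}J_{z_2}A^\tau$; right-multiplying by $A$ and observing that $A^\tau A$ commutes with $J_{z_1}J_{z_2}$ (a direct consequence of $(1)$) then yields $\tilde J_{Cz_1}\tilde J_{Cz_2}A=AJ_{z_1}J_{z_2}$.

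The main obstacle is the scalarisation of $CC^\tau$ in the second paragraph. The essential trick is that the isomorphism relation, applied once to $\Phi$ and once to $\Phi^\tau$ and then combined by substitution, overdetermines the operator $(AA^\tau)\tilde J_w(AA^\tau)$, and the injectivity of the Clifford representation $w\mapsto\tilde J_w$ converts this overdetermination into the pointwise eigenvalue equation $CC^\tau w=\lambda(w)w$, from which scalarity of $CC^\tau$ is automatic. Everything else---the sign of $\lambda$, the rescaling of $A$, and the deduction of $(1)$ and $(2)$---is routine bookkeeping.
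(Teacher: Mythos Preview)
The paper does not prove this proposition; it is quoted (together with Propositions~\ref{Lie algebra isomorphism 1} and~\ref{Lie algebra isomorphism 3}) from the companion article~\cite{FM1} without argument, so there is no in-text proof to compare against.

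Your derivation is correct and self-contained. The key step---combining relation~\eqref{isomorphism relation} for $\Phi$ with its counterpart for $\Phi^\tau$ to over-determine $(AA^\tau)\tilde J_w(AA^\tau)$, and then using injectivity of $w\mapsto\tilde J_w$ to force $CC^\tau=\lambda\,\Id$---is exactly the mechanism behind the result, and the sign of $\lambda$ is correctly pinned down by the signature constraint $r\ne s$. Two small remarks. First, the injectivity of $w\mapsto\tilde J_w$ you invoke is justified because $[\tilde U,\tilde U]=\mathbb R^{r,s}$ in a pseudo $H$-type algebra, so $\tilde J_w=0$ forces $\la w,[\tilde U,\tilde U]\ra_{r,s}=0$ and hence $w=0$. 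Second, your derivation of item~(2) nowhere uses the orthogonality hypothesis $\la z_1,z_2\ra_{r,s}=0$; the commutation $(A^\tau A)J_{z_1}J_{z_2}=J_{z_1}J_{z_2}(A^\tau A)$ follows from item~(1) alone via $(A^\tau A)J_z=J_z(A^\tau A)^{-1}$, so you have in fact established the identities of~(2) in slightly greater generality than stated.
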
 
\begin{proposition}\label{Lie algebra isomorphism 3}{\em \cite{FM1}}
If $\Phi=A\oplus C\colon\n_{r,r}(U)\to\n_{{r},{r}}(\tilde{U})$
is a Lie algebra isomorphism, then
$C^{\tau}C=\Id$ for $r\equiv 0,1,2\,(\text{\em mod}~4)$ 
and $C^{\tau}C=-\Id$ for $r\equiv 3\,(\text{\em mod}~4)$. 
The map $A$ can be normalised such that $|\det A^{\tau} A|=1$ and it satisfies the conditions of items $(1)-(2)$ of Proposition~\ref{Lie algebra isomorphism 2}. 
\end{proposition}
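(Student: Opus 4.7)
The plan is to extract information from the fundamental relation
\begin{equation*}
A^{\tau}\tilde{J}_{w}A=J_{C^{\tau}(w)},\qquad w\in\mathbb{R}^{r,r},
\end{equation*}
which encodes the bracket-compatibility of the isomorphism $\Phi=A\oplus C$, and then to exploit the fact that $\Cl_{r,r}$ is simple (since $r-r=0\not\equiv 3\,(\text{mod}~4)$). My first step would be to show that $C^{\tau}C$ is a scalar multiple of the identity. Squaring the displayed relation gives $(A^{\tau}\tilde{J}_{w}A)^{2}=-\la C^{\tau}w,C^{\tau}w\ra_{r,r}\,\Id$, and polarising in $w$ together with a parallel computation for two orthogonal vectors $w_{1},w_{2}$ produces a compatibility condition for $AA^{\tau}$ with the $\tilde{J}$'s. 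Schur's lemma applied to the irreducible Clifford action, combined with the symmetric information extracted from $\Phi^{\tau}$, forces $AA^{\tau}$ to act as a scalar on each simple component, and consequently $C^{\tau}C=\lambda\,\Id$ for some nonzero $\lambda\in\mathbb{R}$. The normalisation $|\det A^{\tau}A|=1$ then yields $|\lambda|=1$, so $\lambda=\pm 1$.

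The heart of the proof is to identify the sign of $\lambda$ in terms of $r\,(\text{mod}~4)$. My approach would be to apply the fundamental relation to the ordered volume element $\Omega^{r,r}=z_{1}\cdots z_{2r}$, which satisfies $(\Omega^{r,r})^{2}=1$ and therefore acts as a fixed scalar on the (unique) irreducible module of $\Cl_{r,r}$. Extending the fundamental relation multiplicatively to products of generators produces an identity of the form $(\det C)\,J_{\Omega^{r,r}}=(-1)^{\varepsilon(r)}\,J_{\Omega^{r,r}}$, where $\varepsilon(r)$ records the parity arising from applying the $\tau$-adjoint to a product of $2r$ factors under the reversal involution of $\Cl_{r,r}$. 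A direct parity count shows $\varepsilon(r)$ to be even precisely when $r\equiv 0,1,2\,(\text{mod}~4)$. Combining this with the identity $(\det C)^{2}=\det(C^{\tau}C)=\lambda^{2r}$ pins down $\lambda=+1$ when $r\equiv 0,1,2\,(\text{mod}~4)$ and $\lambda=-1$ when $r\equiv 3\,(\text{mod}~4)$.

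Once the sign of $C^{\tau}C$ has been fixed, the remaining claim that $A$ satisfies the conditions of items $(1)$ and $(2)$ of Proposition~\ref{Lie algebra isomorphism 2} follows by substituting $w=Cz$ back into the fundamental relation and using $C^{\tau}C=\pm\Id$, essentially repeating the algebraic manipulations of the proofs of Propositions~\ref{Lie algebra isomorphism 1} and~\ref{Lie algebra isomorphism 2}. I expect the main obstacle to be the sign determination in the second step: the parity count via the volume form and the $\tau$-adjoint is precisely where the Clifford periodicity interacts with the choice of admissible scalar product, and careful bookkeeping of conventions (basis orientation, chosen admissible module, and normalisation of the adjoint) is required to extract the clean dichotomy stated in the proposition.
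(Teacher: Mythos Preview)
The paper does not supply its own proof of this proposition; it is quoted from~\cite{FM1}, so there is nothing in the present paper to compare your argument against. I therefore comment directly on the soundness of your proposal.

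Your first step---reducing to $C^{\tau}C=\lambda\,\Id$ with $\lambda=\pm1$ via a Schur-type argument and the simplicity of $\Cl_{r,r}$---parallels the reasoning behind Propositions~\ref{Lie algebra isomorphism 1} and~\ref{Lie algebra isomorphism 2} and is fine.

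The second step contains a concrete error. You assert that $\Omega^{r,r}$ ``acts as a fixed scalar on the (unique) irreducible module of $\Cl_{r,r}$''. This is false: the volume element of $\Cl_{p,q}$ is central if and only if $p+q$ is odd, and here $p+q=2r$ is even. Hence $\Omega^{r,r}$ anticommutes with every generator $z_i$, and $J_{\Omega^{r,r}}$ is an involution whose $\pm1$-eigenspaces are interchanged by each $J_{z_i}$; it is never a scalar operator on the irreducible $\Cl_{r,r}$-module. Consequently your target identity $(\det C)\,J_{\Omega^{r,r}}=(-1)^{\varepsilon(r)}J_{\Omega^{r,r}}$ cannot be obtained by a Schur argument. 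The auxiliary relation $(\det C)^{2}=\det(C^{\tau}C)=\lambda^{2r}=1$ also holds for either sign of $\lambda$, so it carries no information about the sign.

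You should in addition sanity-check the literal statement against the identity automorphism $\Id\oplus\Id$, for which $C^{\tau}C=\Id$ regardless of $r$; the precise content in~\cite{FM1} is more nuanced than the compressed formulation recorded here. A workable repair of your approach replaces the (unavailable) scalarness of $J_{\Omega^{r,r}}$ by its $\tau$-symmetry type: from $J_{z_i}^{\tau}=-J_{z_i}$ one gets $J_{\Omega^{r,r}}^{\tau}=(-1)^{r(2r-1)}J_{\Omega^{r,r}}$, and iterating the two-factor intertwining relation of item~(2) $r$ times produces a genuine constraint linking $\lambda$ to $r\bmod 4$. The bookkeeping is exactly the delicate part you anticipated, but it must be grounded in symmetry of $J_{\Omega^{r,r}}$ rather than centrality.
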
 

We explain a possible construction of the map $A\colon V^{r,s}_{min}\to\tilde V^{r,s}_{min}$. It was shown in~\cite[Corollary 5, Theorem 3]{FM1} that the map $A$ can be obtained by the following procedure. The system of involutions $PI_{r,s}$ acting on the minimal admissible modules decomposes them into the direct sums $V^{r,s}_{min}=\oplus_i E_i$, $\tilde V^{r,s}_{min}=\oplus_i \tilde E_i$ of common eigenspaces. We start by constructing a map $A_1\colon E_{r,s}\to\tilde E_{\tilde r,\tilde s}$, where $E_{r,s}\subset V^{r,s}_{min}$, $\tilde E_{\tilde r,\tilde s}\subset\tilde V^{r,s}_{min}$ are common 1-eigenspaces of the system of involutions $PI_{r,s}$ and $PI_{\tilde r,\tilde s}$. Then the map $A_1$ produces the rest of the maps $A_i\colon E_i\to\tilde E_i$ between the eigenspaces. Thus the map $A$ has block diagonal form $A=\oplus_i A_i$ written in the basis described in Section~\ref{sec:lattice} and satisfying the relations of Propositions~\ref{Lie algebra isomorphism 1} and~\ref{Lie algebra isomorphism 2}.

\begin{example}\label{ex:2}
Recall decomposition~\eqref{eq:decom08} of $V^{\mu,\nu}_{min}$ 
for $(\mu,\nu)\in\{(8,0),(0,8),(4,4)\}$. Let $A\oplus \Id\colon n_{r,s}(V^{\mu,\nu}_{min})\to n_{r,s}(\tilde V^{\mu,\nu}_{min})$ and 
$$
\tilde V_{min}^{\mu,\nu}=\tilde E_{\mu,\nu}\bigoplus_{i=1}^{8}\tilde J_{\zeta_i}(\tilde E_{\mu,\nu})\bigoplus_{j=2}^{8}\tilde J_{\zeta_1}\tilde J_{\zeta_{j}}(\tilde E_{\mu.\nu})
$$
as in~\eqref{eq:decom08}. The condition~\eqref{isomorphism relation} applied to a vector $u\in E_{\mu,\nu}$ is equivalent to the statement that $A_{j+1}^{\tau}\tilde J_{\zeta_j}A_1=J_{\zeta_j}$, where $A_1=A\vert_{E_{\mu,\nu}}$ and $A_{j+1}^{\tau}=A^{\tau}\vert_{\tilde J_{\zeta_j}(\tilde E_{\mu,\nu})}$, $j=1,\ldots,8$, are the restrictions of the maps on the indicated spaces and the diagram 
\[
\begin{CD}
E_{\mu,\nu}@>>{J_{\zeta_{j}}}>J_{\zeta_{j}}(E_{\mu,\nu})
\\
@V{A_{1}}VV @AA{A_{j+1}^{\tau}}A
\\
\tilde E_{\mu,\nu}@>>{\tilde J_{\zeta_{j}}}>\tilde J_{\zeta_{j}}(\tilde E_{\mu,\nu})
\end{CD}
\]
commutes. This shows that the maps 
$$
A_{j+1}=\tilde J_{\zeta_j}(A_1^{-1})^{\tau}J_{\zeta_j}^{-1}\colon J_{\zeta_j}(E_{\mu,\nu})\to\tilde J_{\zeta_j}(\tilde E_{\mu,\nu}),\quad j=1,\ldots,8,
$$
are completely determined by the map $A_1$. The conditions $AJ_{\zeta_1}J_{\zeta_j}=\tilde J_{\zeta_1}\tilde J_{\zeta_j}A$ determine the maps 
$$
A_{j+8}=A_{1,j}=\tilde J_{\zeta_1}\tilde J_{\zeta_j}A_1(J_{\zeta_1}J_{\zeta_j})^{-1}\colon J_{\zeta_1}J_{\zeta_j}(E_{\mu,\nu})\to \tilde J_{\zeta_1}\tilde J_{\zeta_j}(\tilde E_{\mu,\nu}),\quad j=2,\ldots,8.
$$
Thus the map $A\colon V^{\mu,\nu}_{min}\to \tilde V^{\mu,\nu}_{min}$ is defined by $A_1\colon E_{\mu,\nu}\to \tilde E_{\mu,\nu}$ and has the form $A=\oplus_{j=1}^{j=16}A_j$ in a suitable basis.
\end{example}

\begin{lemma}\label{isomorphism between opposit sign scalar product}
Let $U^+=(U,\la\cdot\,,\cdot\ra_{U})$ be an admissible module of $\Cl_{r,s}$ and $J_z\colon U\to U$ be the action map, then the module  $U^-=(U,-\la\cdot\,,\cdot\ra_{U})$  
is an admissible with the same action map
$J_{z}\colon U^{-}=U\to U^{-}=U$. Moreover, 
the Lie algebras $\n_{r,s}(U)$ and $\n_{r,s}(\tilde{U})$ are
isomorphic under the isomorphism $\Id\oplus -\Id$.

\end{lemma}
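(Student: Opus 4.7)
The statement has two parts: verifying that $U^- = (U, -\la\cdot\,,\cdot\ra_U)$ is an admissible module for $\Cl_{r,s}$ with the same action $J_z$, and constructing an explicit Lie algebra isomorphism between $\n_{r,s}(U^+)$ and $\n_{r,s}(U^-)$. Both parts are essentially a matter of tracking signs, so I do not anticipate any genuine obstacle; the subtlety, if any, is only bookkeeping between the two ways in which the sign enters (on the scalar product and on the centre).

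For the first part, I would observe that the Clifford relation $J_z^2 = -\la z,z\ra_{r,s}\,\Id_U$ does not involve the scalar product on $U$ at all, so $J_z$ is still a Clifford action on the underlying vector space $U$. The admissibility condition~\eqref{admissibility condition 1} reads
\[
-\la J_z x, y\ra_U + (-\la x, J_z y\ra_U) = -\bigl(\la J_z x, y\ra_U + \la x, J_z y\ra_U\bigr) = 0,
\]
which holds since the original scalar product was admissible. Thus $U^-$ is admissible.

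For the second part, let $[\cdot\,,\cdot]^\pm$ denote the pseudo $H$-type brackets on $U$ induced by $\pm\la\cdot\,,\cdot\ra_U$ via~\eqref{def of skew-symmetric bi-linear form}. For every $z \in \mathbb{R}^{r,s}$ and $x,y\in U$,
\[
\la z, [x,y]^-\ra_{r,s} = -\la J_z x, y\ra_U = -\la z, [x,y]^+\ra_{r,s},
\]
so by non-degeneracy of $\la\cdot\,,\cdot\ra_{r,s}$ one concludes $[x,y]^- = -[x,y]^+$. This sign flip of the bracket can be absorbed by flipping the sign on the centre: if $\Phi = \Id_U \oplus (-\Id_{\mathbb{R}^{r,s}})$, then
\[
\Phi\bigl([(x,z),(y,w)]_{\n_{r,s}(U^+)}\bigr) = \Phi(0,[x,y]^+) = (0,-[x,y]^+) = (0,[x,y]^-) = [\Phi(x,z),\Phi(y,w)]_{\n_{r,s}(U^-)}.
\]
Since $\Phi$ is clearly a linear bijection, it is the desired Lie algebra isomorphism.

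In summary, the plan is: first check admissibility by direct substitution; then read off $[x,y]^- = -[x,y]^+$ from~\eqref{def of skew-symmetric bi-linear form} and the non-degeneracy of the scalar product on $\mathbb{R}^{r,s}$; finally verify that the sign on the centre exactly compensates this bracket reversal. The hardest part is only making sure the negative signs are placed consistently, and no deeper ingredient is needed.
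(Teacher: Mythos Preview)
Your proof is correct and follows essentially the same approach as the paper: both verify admissibility by direct substitution and then show, via the defining relation~\eqref{def of skew-symmetric bi-linear form}, that the sign change on the scalar product flips the bracket, which is exactly compensated by $-\Id$ on the centre. Your write-up is in fact slightly more explicit than the paper's, which compresses the same computation into a single chain of equalities.
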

\begin{proof} 
By the definition, we can see easily that $U^{-}=(U,-\la\cdot\,,\cdot\ra_{U})$ is
an admissible module and
the map 
\[
\begin{array}{cccc}
\Phi=\Id\oplus-\Id\colon &\n_{r,s}(U^{+})&\to&\n_{r,s}(U^{-})
\\
 &(x,z)&\mapsto &(x,-z)
\end{array}
\]
is a Lie algebra isomorphism because of
\begin{eqnarray*}
\la z,[x,y]\ra_{r,s}=\la J_{z}x,y\ra_{U^+}
=-\la J_{z}x,y\ra_{U^-}=\la J_{-z}x,y\ra_{U^-}
=
\la -z,[x,y]\ra_{r,s}.
\end{eqnarray*}
\end{proof}


\subsection{Automorphisms of the pseudo $H$-type Lie algebras}\label{Lie algebra automorphism}


We discuss here the group $\Aut(\n_{r,s}(U))$
of automorphisms of a Lie algebra $\n_{r,s}(U)$, see also~\cite{E,KT,Riehm}. The group $\Aut(\n_{r,s}(U))$ is 
a subgroup of $\GL(r+s+\dim(U),\mathbb{R})$ and consists of the linear maps 
$$
\Psi=\begin{pmatrix}A&0\\B&C\end{pmatrix}\colon \n_{r,s}(U)=U\oplus \mathbb{R}^{r,s}
\to U\oplus \mathbb{R}^{r,s}=\n_{r,s}(U)
$$
satisfying the condition~\eqref{isomorphism relation}, see also Propositions~\ref{Lie algebra isomorphism 2} and~\ref{Lie algebra isomorphism 3}.
The group $\Aut(\n_{r,s}(U))$
is isomorphic to the following product 
\begin{equation}\label{product structure of automorphism group}
\Aut(\n_{r,s}(u))\cong \mathbb{R}_{+}\times 
\bigr(B_{r,s}\rtimes \Aut^{0}(\n_{r,s}(U))\bigr).
\end{equation}
Here $\mathbb R_+$ is the group of non-homogeneous dilations $\delta_t\colon U\oplus \mathbb{R}^{r,s}
\to U\oplus \mathbb{R}^{r,s}$ acting as $\delta_t(x,z)=(tx,t^2z)$ for
$t\in \mathbb R_+$. 
The group
$B_{r,s}=\left\{\begin{pmatrix}\Id&0\\B&\Id\end{pmatrix}\right\}$ 
is isomorphic to $\mathbb{R}^{(r+s)\cdot \dim(U)}$. The subgroup $\Aut^{0}(\n_{r,s}(U))$,
consisting of the automorphisms of the form
$\Psi=\begin{pmatrix}A&0\\0&C\end{pmatrix}$ with
$C^{\tau}C=\pm \Id$, is called the group of {\it restricted automorphisms}. 
The semi-direct product in \eqref{product structure of automorphism group}
comes from the action of the subgroup $\Aut^{0}(\n_{r,s}(U))$
on 
$B_{r,s}$
by 
\[
\begin{pmatrix}A&0\\0&C\end{pmatrix}\cdot 
\begin{pmatrix}\Id&0\\B&\Id\end{pmatrix}\cdot
\begin{pmatrix}A^{-1}&0\\0&C^{-1}\end{pmatrix}
=\begin{pmatrix}\Id&0\\CBA^{-1}&\Id\end{pmatrix}\in B_{r,s}.
\]
The group of automorphisms of the Lie algebras
$\n_{r,0}(U)$ was studied 
in~\cite{Barbano,Kaplan81,Riehm,Saal}.

Now we present an example of elements of
$\Aut^{0}(\n_{r,s}(U))$, 
that will be important for the classification of the Lie algebras $\n_{r,s}(U)$. 
The map
$$
\mathbb{R}^{r,s}\ni z\mapsto -z\in \mathbb{R}^{r,s}\subset \Cl_{r,s}
$$ is extended to the Clifford algebra automorphism $\alpha\colon\Cl_{r,s} \to \Cl_{r,s}$
by the universal property of the Clifford algebras. 
We denote by $\Cl_{r,s}^{\times}$ the group of invertible elements in
$\Cl_{r,s}$ and in particular ${\mathbb{R}^{r,s}}^{\times}=\{v\in\mathbb R^{r,s}\vert\ \la v,v\ra_{r,s}\not=0\}
=\mathbb{R}^{r,s}\cap \Cl_{r,s}^{\times}$. The representation 
$\widetilde{\Ad}\colon{\mathbb R^{r,s}}^{\times}\to\End(\mathbb R^{r,s})$, 
is defined as
$$
\widetilde{\Ad}_{v}(z)=-v zv^{-1}
=\left(z-2\frac{\la z,v\ra_{r,s}}{\la v,v\ra_{r,s}}v\right)\in\mathbb R^{r,s}\quad 
\text{for}\quad z\in\mathbb R^{r,s},\ \ v\in {\mathbb{R}^{r,s}}^{\times}.
$$
Then it extends to the, so called, twisted adjoint representation 
$\widetilde{\Ad}\colon\Cl_{r,s}^{\times}\to\GL(\Cl_{r,s})$ by setting
\begin{equation}\label{twisted adjoint representation}
\Cl_{r,s}^{\times}\ni \varphi\longmapsto \widetilde{\Ad}_{\varphi},\quad
\widetilde{\Ad}_{\varphi}(\phi)=\alpha(\varphi) \phi \varphi^{-1}, \quad \phi\in \Cl_{r,s}.
\end{equation}
The map $\widetilde{\Ad}_{v}$ for $v\in{\mathbb{R}^{r,s}}^{\times}$, leaving the space
$\mathbb{R}^{r,s}\subset \Cl_{r,s}$ invariant, is also an isometry:
$
\la\widetilde{\Ad}_{v}(z),\widetilde{\Ad}_{v}(z)\ra_{r,s}=\la
z, z\ra_{r,s}$.
Note that ${(\widetilde{\Ad}_{\varphi^{-1}}})^{\tau}=\widetilde{\Ad}_{\varphi}$.
Subgroups of $\Cl_{r,s}^{\times}$ defined by
\begin{align*}
&\Pin(r,s)=\{v_{1}\cdots v_{k}\in \Cl_{r,s}^{\times}\mid\ \la v_{i},v_{i}\ra_{r,s}=\pm 1\},
\\
&\Spin(r,s)=\{v_{1}\cdots v_{k}\in \Cl_{r,s}^{\times}\mid\ k~\text{is even},\la v_{i},v_{i}\ra_{r,s}=\pm 1\},
\end{align*}
are called pin and spin groups, respectively. The reader can find more information about the twisted adjoint representation and the groups $\Pin$ and $\Spin$, see~\cite{LawMich}. 

\begin{proposition}\label{prop:coverings}{\em \cite{ABS, LawMich}}
The maps 
$$
\widetilde{\Ad}\colon\Pin(r,s)\to \Orth(r,s)\quad\text{and}\quad
\widetilde{\Ad}\colon\Spin(r,s) \to \SO(r,s) 
$$
are the double covering maps.
\end{proposition}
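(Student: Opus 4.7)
The plan is to run the standard three-step argument for the twisted adjoint representation: (i) show the image lies in $\Orth(r,s)$ (respectively $\SO(r,s)$); (ii) establish surjectivity via the Cartan-Dieudonn\'e theorem; and (iii) identify the kernel as $\{\pm 1\}$. The first step is essentially in hand from the excerpt: for $v \in (\mathbb{R}^{r,s})^{\times}$ the map $\widetilde{\Ad}_{v}$ is an isometry of $\mathbb{R}^{r,s}$, and the explicit formula exhibits it as the hyperplane reflection $\sigma_{v}(z) = z - 2\frac{\la z,v\ra_{r,s}}{\la v,v\ra_{r,s}} v$. Since $\widetilde{\Ad}$ is a multiplicative homomorphism on $\Cl_{r,s}^{\times}$ and $\Pin(r,s)$ is generated by unit vectors, $\widetilde{\Ad}(\Pin(r,s)) \subset \Orth(r,s)$, and because $\det \sigma_{v} = -1$, products of even length land in $\SO(r,s)$, so $\widetilde{\Ad}(\Spin(r,s)) \subset \SO(r,s)$.

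For surjectivity I would invoke the Cartan-Dieudonn\'e theorem for non-degenerate real quadratic forms: every $g \in \Orth(r,s)$ is a composition of at most $r+s$ hyperplane reflections $\sigma_{v_{1}} \circ \cdots \circ \sigma_{v_{k}}$ with $\la v_{i},v_{i}\ra_{r,s} \neq 0$. Rescaling each $v_{i}$ so that $\la v_{i}, v_{i}\ra_{r,s} = \pm 1$, the product $v_{1}\cdots v_{k} \in \Pin(r,s)$ is a pre-image of $g$ under $\widetilde{\Ad}$. When $g \in \SO(r,s)$ the integer $k$ can be taken even, producing a lift in $\Spin(r,s)$; thus $\widetilde{\Ad}$ surjects onto $\Orth(r,s)$ and its restriction to $\Spin(r,s)$ surjects onto $\SO(r,s)$.

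The kernel computation is the main technical obstacle. Suppose $\varphi \in \Pin(r,s)$ satisfies $\alpha(\varphi)z = z\varphi$ for every $z\in\mathbb{R}^{r,s}$. Decompose $\varphi = \varphi^{+} + \varphi^{-}$ according to the $\mathbb{Z}/2$-grading of $\Cl_{r,s}$; since $\alpha(\varphi^{\pm}) = \pm\varphi^{\pm}$, the condition separates into $\varphi^{+} z = z \varphi^{+}$ and $\varphi^{-} z = -z\varphi^{-}$ for every generator $z$. Expanding $\varphi^{\pm}$ in the standard monomial basis $\{z_{i_{1}} \cdots z_{i_{\ell}}\}$ and commuting through each basis vector $z_{j}$ in turn, an index-counting argument based on the anti-commutation relations $z_{i}z_{j}=-z_{j}z_{i}$ and $z_{i}^{2}= \mp 1$ forces every non-scalar monomial in $\varphi^{+}$ to vanish and all of $\varphi^{-}$ to vanish. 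Hence $\varphi \in \mathbb{R}\cdot 1$, and combined with the norm identity $\varphi\,\widetilde{\varphi} = \prod_{i} v_{i}^{2} = \pm 1$ (where $\widetilde{\cdot}$ denotes the reversal anti-automorphism, which fixes scalars), this forces $\varphi^{2} = 1$ and so $\varphi = \pm 1$. The twist by $\alpha$ is essential here: without it the volume form $\Omega^{r,s}$ would lie in the kernel whenever $r+s$ is odd, because $\Omega^{r,s}$ is central in that case, while $\alpha(\Omega^{r,s}) = -\Omega^{r,s}$ eliminates it from $\ker\widetilde{\Ad}$.

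Combining the three steps, $\widetilde{\Ad}\colon \Pin(r,s) \to \Orth(r,s)$ and $\widetilde{\Ad}\colon \Spin(r,s) \to \SO(r,s)$ are continuous surjective group homomorphisms with kernel $\{\pm 1\}$; since $\{\pm 1\}$ is discrete and central, these maps are two-sheeted covering maps.
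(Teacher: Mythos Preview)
The paper does not supply its own proof of this proposition; it is stated with a citation to \cite{ABS, LawMich} and invoked as a known result. Your argument is precisely the standard proof found in those references (compare Lawson--Michelsohn, Chapter~I, Theorem~2.9): the reflection formula for $\widetilde{\Ad}_{v}$ gives the image, Cartan--Dieudonn\'e gives surjectivity, and the graded commutation argument identifies the kernel as $\{\pm 1\}$. The reasoning is correct, including the observation that the twist by $\alpha$ is what removes the volume form from the kernel in odd total dimension.
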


We make the identification $\Spin(r)\times \Pin(s)\cong
\Spin(r,0)\times \Pin(0,s)\subset \Pin(r,s)$ and present a special map
from $\Aut^{0}(\n_{r,s}(U))$.

\begin{proposition}\label{automorohisms from Clifford algebra} 
Let $J\colon\Cl_{r,s}\to\End(U)$ be a Clifford algebra representation and $\varphi\in \Spin(r)\times \Pin(s)$. Then
$J_{\varphi^{-1}}\oplus (\widetilde{\Ad}_{\varphi})^{\tau}
\in \Aut^{0}(\n_{r,s}(U))$.
The group homomorphism
$$
\begin{array}{ccccc}
\mathcal{A}\colon &\Spin(r)\times \Pin(s)&\to & \Aut^{0}(\n_{r,s}(U)),
\\
&\varphi&\mapsto &J_{\varphi^{-1}}\oplus (\widetilde{\Ad}_{\varphi})^{\tau}
\end{array}
$$
is injective and the diagram
\begin{equation}\label{CD11}
\begin{CD}
{0}@>>>K_{r,s}(U)@>>> \Aut^{0}(\n_{r,s}(U)) @>{A\oplus C \,\mapsto\, C}>> \Orth(r,s)@.\\
@.@AAA{\mathcal{A}}@AAA@AA{=}A\\
{0}@>>>\mathbb Z_{2}@>>> \Spin(r)\times \Pin(s) @ >{\widetilde{\Ad}}>>O(r,s)@.
\end{CD}
\end{equation}
is commutative. The kernel $K_{r,s}(U)$ consists of automorphisms of the
form
$A\oplus \Id$. 
\end{proposition}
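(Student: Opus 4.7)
The plan is to verify four things: $\mathcal{A}(\varphi)\in\Aut^{0}(\n_{r,s}(U))$ for every $\varphi\in\Spin(r)\times\Pin(s)$; that $\mathcal{A}$ respects the group law; that $\mathcal{A}$ is injective; and that~\eqref{CD11} commutes with kernel $K_{r,s}(U)$ consisting of automorphisms of the form $A\oplus\Id$. The computational heart of the argument is the identity
\[
(J_{\varphi^{-1}})^{\tau}\,J_{w}\,J_{\varphi^{-1}}=J_{\widetilde{\Ad}_{\varphi}(w)},\qquad w\in\mathbb R^{r,s},
\]
which is exactly the isomorphism condition~\eqref{isomorphism relation} for $A=J_{\varphi^{-1}}$ and $C=(\widetilde{\Ad}_{\varphi})^{\tau}$.

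I would establish this identity first on generators. For a single negative vector $v\in\mathbb R^{0,s}$ with $\la v,v\ra_{r,s}=-1$ one has $v^{-1}=v$ in $\Cl_{r,s}$, $J_{v}^{\tau}=-J_{v}$ by admissibility, and $J_{v}^{2}=\Id_{U}$; a short Clifford computation using $J_vJ_w+J_wJ_v=-2\la v,w\ra_{r,s}$ shows that both sides equal $J_{w}+2\la v,w\ra_{r,s}J_{v}$. For a single positive $v\in\mathbb R^{r,0}$ the analogous calculation produces an extra minus, yielding $-J_{\widetilde{\Ad}_{v}(w)}$; this is precisely why the proposition is formulated over $\Spin(r)\times\Pin(s)$ rather than the larger $\Pin(r,s)$, as pairing positive generators into an even product restores the correct sign. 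Multiplicativity then extends the identity from generators to all of $\Spin(r)\times\Pin(s)$ through $(AB)^{\tau}=B^{\tau}A^{\tau}$ and the homomorphism property $\widetilde{\Ad}_{\varphi\psi}=\widetilde{\Ad}_{\varphi}\,\widetilde{\Ad}_{\psi}$. Since $\widetilde{\Ad}_{\varphi}\in\Orth(r,s)$, I also get $C^{\tau}C=\widetilde{\Ad}_{\varphi}(\widetilde{\Ad}_{\varphi})^{\tau}=\Id$, so $\mathcal{A}(\varphi)$ lies in $\Aut^{0}(\n_{r,s}(U))$.

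Compatibility with the group law is then an immediate block computation: $J_{\varphi^{-1}}J_{\psi^{-1}}=J_{(\psi\varphi)^{-1}}$ and $(\widetilde{\Ad}_{\varphi})^{\tau}(\widetilde{\Ad}_{\psi})^{\tau}=(\widetilde{\Ad}_{\psi\varphi})^{\tau}$, both blocks aligning with the value of $\mathcal{A}$ on the appropriate product. For injectivity, if $\mathcal{A}(\varphi)=\Id$ then the centre block forces $\widetilde{\Ad}_{\varphi}=\Id$, and Proposition~\ref{prop:coverings} gives $\varphi\in\{\pm 1\}$; the case $\varphi=-1$ is ruled out because $J$ is an algebra homomorphism so $J_{-1}=-\Id_{U}\neq\Id_{U}$. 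Commutativity of~\eqref{CD11} is then transparent: the right square identifies the $C$-block of $\mathcal{A}(\varphi)$ with $\widetilde{\Ad}_{\varphi}$ via the involution $\tau$ (which coincides with inversion on $\Orth(r,s)$), and $K_{r,s}(U)$ is by definition the kernel of the projection $A\oplus C\mapsto C$, hence consists exactly of automorphisms of the form $A\oplus\Id$.

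The main obstacle will be the sign bookkeeping in the core identity. The interplay between the Clifford inverse $v^{-1}=-v/\la v,v\ra_{r,s}$, the grading involution $\alpha$ implicit in $\widetilde{\Ad}_{v}(z)=\alpha(v)zv^{-1}$, and the $\tau$-adjoint determined by the possibly indefinite admissible scalar product produces a genuine asymmetry between positive and negative generators. Controlling this is what forces the domain to be $\Spin(r)\times\Pin(s)$ and is the only place where the proof requires real care; once the central identity is in hand, the remaining assertions reduce to formal verifications.
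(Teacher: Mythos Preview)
Your proposal is correct and follows essentially the same route as the paper. The only organisational difference is that the paper first observes that $J_{\alpha(\varphi)}J_{z}J_{\varphi^{-1}}=J_{\widetilde{\Ad}_{\varphi}(z)}$ holds automatically (since $J$ is an algebra homomorphism and $\widetilde{\Ad}_{\varphi}(z)=\alpha(\varphi)z\varphi^{-1}$ in $\Cl_{r,s}$), and then reduces the whole question to the single identity $J_{\alpha(\varphi)}=(J_{\varphi^{-1}})^{\tau}$, which it checks on generators exactly as you do; your direct computation of both sides for a single $v$ is equivalent. In fact your write-up is more complete than the paper's proof, which only treats the first assertion and leaves the homomorphism property, injectivity, and the commutativity of~\eqref{CD11} to the reader.
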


\begin{proof}
By the definition of the twisted adjoint representation,
$
\alpha(\varphi)z \varphi^{-1}=\widetilde{\Ad}_{\varphi}(z)
$
we have
\begin{equation*}\label{candidate of automorphism}
J_{\alpha(\varphi)} J_{z}J_{\varphi^{-1}}=J_{\widetilde{\Ad}_{\varphi}(z)},\quad z\in{\mathbb R^{r,s}}^{\times}.
\end{equation*}
If we show that  $
J_{\alpha(\varphi)}=J_{\varphi^{-1}}^{\tau}$, or equivalently $J_{\alpha(\varphi^{-1})}=J_{\varphi}^{\tau}$
for $\varphi\in \Pin(r,s)$,  then it will imply that $J_{\varphi^{-1}}\oplus
{(\widetilde{\Ad}_{\varphi})}^{\tau} \in \Aut^{0}(\n_{r,s})$ due to the relation $A^{\tau}J_zA=J_{C^{\tau}(z)}$.

If $v\in\mathbb{R}^{r,s \times}$ is such that $\la v,v\ra_{r,s}=-1$, then
$J_{v^{-1}}^{\tau}=J_{v}^{\tau}=-J_{v}=J_{\alpha(v)}$, and hence
$J_{v^{-1}}\oplus (\widetilde{\Ad}_{v})^{\tau}\in\Aut^{0}(\n_{r,s}(U))$. If
$v$ is such that $\la v,v\ra_{r,s}=1$, then
$J_{v^{-1}}^{\tau}=J_{-v}^{\tau}=J_{v}\not=J_{\alpha(v)}$, and therefore 
the map $J_{v^{-1}}\oplus (\widetilde{\Ad}_{v})^{\tau}$ does not belong to $\Aut^{0}(\n_{r,s}(U))$.
If $\varphi=v_{1}v_{2}$ with $\la
v_{i},v_{i}\ra_{r,s}=\pm 1$, $i=1,2$, then
$J_{(v_{1}v_{2})^{-1}}=J_{v_{2}v_{1}}=J_{\alpha(v_{1}v_{2})}^{\tau}$. It implies
$
J_{(v_{1}v_{2})^{-1}}\oplus (\widetilde{\Ad}_{v_{1}v_{2}})^{\tau}\in \Aut^{0}(\n_{r,s}(U)).
$
In general, if $\varphi=x_{1}\cdots x_{2p}\cdot y_{1}\cdots y_{q}\in\Pin(r,s)$
with $\la x_i,x_i\ra_{r,s}=1$, $i=1,\ldots, 2p$, and $\la y_j,y_j\ra_{r,s}=-1$, $j=1,\ldots, q$, then we obtain 
\[
(J_{(x_{1}\cdots x_{2p}\cdot y_{1}\cdots y_{q})^{-1}})^{\tau}=
(J_{y_{q},\cdots y_{1}\cdot x_{2p}\cdots x_{1}})^{\tau}
=(-1)^{2p+q}J_{x_{1}\cdots x_{2p}\cdot y_{1}\cdots y_{q}}
=J_{\alpha(x_{1}\cdots x_{2p}\cdot y_{1}\cdots y_{q})}.
\]
\end{proof}

\begin{corollary}\label{-Id included}
 There is an automorphism
$A\oplus -\Id\in \Aut^{0}(\n_{2r,s}(U))$ for any $r,s$.
\end{corollary}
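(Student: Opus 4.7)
The natural strategy is to exhibit $-\Id\in\Orth(2r,s)$ as the image $\widetilde{\Ad}_\varphi$ of a suitable element $\varphi\in\Spin(2r)\times\Pin(s)$ and then invoke Proposition~\ref{automorohisms from Clifford algebra}, which delivers the automorphism $J_{\varphi^{-1}}\oplus(\widetilde{\Ad}_\varphi)^\tau$ of $\n_{2r,s}(U)$.

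To construct $\varphi$, I would fix an orthonormal basis $\{z_1,\ldots,z_{2r},z_{2r+1},\ldots,z_{2r+s}\}$ of $\mathbb{R}^{2r,s}$ with $\la z_i,z_i\ra_{2r,s}=+1$ for $i\leq 2r$ and $\la z_j,z_j\ra_{2r,s}=-1$ for $j>2r$, and set
\[
\varphi=z_1z_2\cdots z_{2r}\cdot z_{2r+1}\cdots z_{2r+s}\in\Cl_{2r,s}.
\]
Because the number of positive factors is even, $\varphi$ lies in $\Spin(2r,0)\cdot\Pin(0,s)\cong\Spin(2r)\times\Pin(s)$. Each $\widetilde{\Ad}_{z_k}$ is the reflection across the hyperplane $z_k^\perp$ (from the displayed formula for $\widetilde{\Ad}_v$), so for any basis vector $z_m$ only the factor $\widetilde{\Ad}_{z_m}$ acts nontrivially, yielding $\widetilde{\Ad}_\varphi(z_m)=-z_m$. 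Hence $\widetilde{\Ad}_\varphi=-\Id$ on $\mathbb{R}^{2r,s}$.

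Finally, since $\widetilde{\Ad}_\varphi\in\Orth(2r,s)$ is an isometry and $-\Id$ is clearly self-adjoint with respect to $\la\cdot\,,\cdot\ra_{2r,s}$, we get $(\widetilde{\Ad}_\varphi)^\tau=(\widetilde{\Ad}_\varphi)^{-1}=-\Id$. Applying Proposition~\ref{automorohisms from Clifford algebra} gives
\[
\mathcal{A}(\varphi)=J_{\varphi^{-1}}\oplus(\widetilde{\Ad}_\varphi)^\tau=J_{\varphi^{-1}}\oplus(-\Id)\in\Aut^{0}(\n_{2r,s}(U)),
\]
so $A:=J_{\varphi^{-1}}$ is the required map.

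There is essentially no obstacle beyond bookkeeping: the hypothesis that the first index is even is precisely what is needed so that the positive part of $\varphi$ contains an even number of factors and therefore lies in $\Spin(2r)$, which is the reason the statement requires $2r$ rather than arbitrary $r$. If one tried to mimic this with an odd number of positive generators, the resulting element would lie in $\Pin(r,s)$ but not in the subgroup $\Spin(r)\times\Pin(s)$ covered by Proposition~\ref{automorohisms from Clifford algebra}, so this hypothesis is essential to the argument.
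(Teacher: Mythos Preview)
Your proof is correct and follows essentially the same approach as the paper's. Both arguments rest on Proposition~\ref{automorohisms from Clifford algebra} and the observation that $-\Id$ lies in the image of $\widetilde{\Ad}$ restricted to $\Spin(2r)\times\Pin(s)$; the paper phrases this abstractly via the commutative diagram~\eqref{CD11} and the surjectivity of $\widetilde{\Ad}\colon\Spin(2r)\times\Pin(s)\to\SO(2r)\times\Orth(s)$, while you make it concrete by exhibiting the explicit preimage $\varphi=z_1\cdots z_{2r+s}$ and thereby identifying $A=J_{\varphi^{-1}}$ directly.
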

\begin{proof}
Observe that the image of the map $\Aut^{0}(\n_{2r,s})\to \Orth(2r,s)$
includes the group $\SO(2r)\times \Orth(s)$. This follows from diagram~\eqref{CD11} and Proposition~\ref{prop:coverings}. Since $-\Id\in \SO(2r)\times \Orth(s)$ belongs to the image of $\Aut^{0}(\n_{2r,s}(U))\to \Orth(2r,s)$ we conclude that there is $A\colon U\to U$ such that $A\oplus -\Id\in \Aut^{0}(\n_{2r,s}(U))$.
\end{proof}

Consider the diagram~\eqref{CD11} in special cases of $\n_{0,8}=\n_{0,8}(V^{0,8}_{min})$ and $\n_{8,0}=\n_{8,0}(V^{8,0}_{min})$. The following two diagrams are exact.
\[
\begin{CD}
\{0\}@>>> K_{0,8}\cong \mathbb{R}^{\times}@>>> \Aut^{0}(\n_{0,8})@>>> \Orth(0,8)\cong \Orth(8)@>>>\{\Id\}\\
@.@AAA@A{\mathcal{A}}AA@AA{=}A\\
\{0\}@>>>\mathbb{Z}_{2} @>>> \Pin(0,8)@>>> \Orth(0,8)\cong \Orth(8)@>>>\{\Id\}
\end{CD}
\]
\[
\begin{CD}
\{0\}@>>> K_{8,0}\cong \mathbb{R}^{\times}@>>> \Aut^{0}(\n_{8,0})@>>> \Orth(8)@>>>\{\Id\}\\
@.@AAA@A{\mathcal{A}}AA@AAA\\
\{0\}@>>>\mathbb{Z}_{2} @>>> \Spin(8)@>>>  \SO(8)@>>>\{\Id\}.
\end{CD}
\]
Since the Lie algebras $\n_{8,0}$ and $\n_{0,8}$ are isomorphic, see~\cite{FM1}, 
there exists an automorphism $A\oplus I_{1}\in
\Aut^{0}(\n_{8,0})$ which is not in the image
$\mathcal{A}(\Spin(8))$, 
where $I_{1}$ is defined as
$I_{1}(\zeta_{1})=-\zeta_{1}$, $I_{1}(\zeta_{j})=\zeta_{j}$ $j=2,\ldots,8$.

At the end of the section we formulate the relation between the existence of an automorphism and an isomorphism of a special type.

\begin{lemma}\label{lem:aut_iso}
A Lie algebra isomorphism $A\oplus \Id\colon \n_{r,s}(V_{min}^{r,s;+})\to\n_{r,s}(V_{min}^{r,s;-})$ exists if and only if there is a Lie algebra automorphism
$A\oplus -\Id\colon\n_{r,s}(V_{min}^{r,s;+})\to\n_{r,s}(V_{min}^{r,s;+})$.
\end{lemma}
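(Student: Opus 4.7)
The key observation is that Lemma~\ref{isomorphism between opposit sign scalar product} already supplies a canonical Lie algebra isomorphism $\Phi_{0}:=\Id\oplus (-\Id)\colon \n_{r,s}(V_{min}^{r,s;+})\to\n_{r,s}(V_{min}^{r,s;-})$ between a pseudo $H$-type algebra and the one obtained by flipping the sign of the admissible scalar product on the module while leaving the $J$-action and the centre intact. In block form the map $\Phi_{0}$ is an involution, hence its inverse $\Phi_{0}^{-1}\colon\n_{r,s}(V_{min}^{r,s;-})\to\n_{r,s}(V_{min}^{r,s;+})$ is again $\Id\oplus(-\Id)$. The strategy is to derive the claimed equivalence purely as a composition of block-diagonal maps with $\Phi_{0}$.

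For the ``only if'' direction, I would assume $\Phi:=A\oplus\Id\colon \n_{r,s}(V_{min}^{r,s;+})\to\n_{r,s}(V_{min}^{r,s;-})$ is a Lie algebra isomorphism and compose with $\Phi_{0}^{-1}$ on the left. Since compositions of Lie algebra isomorphisms are isomorphisms, the result is an automorphism of $\n_{r,s}(V_{min}^{r,s;+})$, and its block form is
\[
\Phi_{0}^{-1}\circ\Phi=\begin{pmatrix} \Id & 0 \\ 0 & -\Id \end{pmatrix}\begin{pmatrix} A & 0 \\ 0 & \Id \end{pmatrix}=\begin{pmatrix} A & 0 \\ 0 & -\Id \end{pmatrix},
\]
which is precisely the desired automorphism $A\oplus(-\Id)$. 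For the ``if'' direction, I would reverse the argument: given an automorphism $\Psi:=A\oplus(-\Id)$ of $\n_{r,s}(V_{min}^{r,s;+})$, the composition $\Phi_{0}\circ\Psi\colon\n_{r,s}(V_{min}^{r,s;+})\to\n_{r,s}(V_{min}^{r,s;-})$ is a Lie algebra isomorphism whose block form
\[
\begin{pmatrix} \Id & 0 \\ 0 & -\Id \end{pmatrix}\begin{pmatrix} A & 0 \\ 0 & -\Id \end{pmatrix}=\begin{pmatrix} A & 0 \\ 0 & \Id \end{pmatrix}
\]
is exactly $A\oplus\Id$, with the same linear map $A$ on the module.

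There is essentially no analytic obstacle here; the whole proof amounts to recognising that Lemma~\ref{isomorphism between opposit sign scalar product} provides a bridge that interchanges ``acting by $-\Id$ on the centre'' with ``changing the sign of the admissible scalar product on the module''. The only point requiring a small amount of care is to keep straight the source and target of each block-diagonal map so that the two compositions are well defined and actually agree on the module component with the prescribed $A$.
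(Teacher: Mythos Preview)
Your proof is correct and slightly more conceptual than the paper's. The paper proves the equivalence by a direct computation with the defining relation $\la z,[x,y]\ra_{r,s}=\la J_{z}x,y\ra_{U}$: writing $[\cdot\,,\cdot]^{\pm}$ for the brackets on the two algebras, it shows from $\la\cdot\,,\cdot\ra_{V_{min}^{r,s;+}}=-\la\cdot\,,\cdot\ra_{V_{min}^{r,s;-}}$ that $[Ax,Ay]^{-}=[x,y]^{+}$ is equivalent to $[Ax,Ay]^{+}=-[x,y]^{+}$. You instead factor through the canonical isomorphism $\Id\oplus(-\Id)$ of Lemma~\ref{isomorphism between opposit sign scalar product} and read off the block form of the composite. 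The two arguments are essentially the same content viewed at different levels: the paper unpacks exactly what your composition does on brackets. Your version has the advantage of making the dependence on Lemma~\ref{isomorphism between opposit sign scalar product} explicit and avoiding any repeated scalar-product bookkeeping; the paper's version is self-contained at the level of the bracket identity.
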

\begin{proof}
Let us assume that $A\oplus \Id_{\mathbb R^{r,s}}\colon \n_{r,s}(V_{min}^{r,s;+})\to\n_{r,s}(V_{min}^{r,s;-})$, where 
$
A\colon V_{min}^{r,s;+} \to V_{min}^{r,s;-}$ is a Lie algebra isomorphism. We assume that the module actions on $V_{min}^{r,s;\pm}$ coincide, but the admissible scalar
products differ by the sign, that is $\la\cdot\,,\cdot\ra_{V_{min}^{r,s;+}}=-\la\cdot\,,\cdot\ra_{V_{min}^{r,s;-}}$. We denote the Lie brackets on the corresponding pseudo $H$-type Lie algebras
by $[x,y]^{\pm}$ for $x,y\in V_{min}^{r,s;\pm}$. Then
\begin{eqnarray*}
\la z,[x,y]^{+}\ra_{r,s}&=&\la z,[Ax,Ay]^{-}\ra_{r,s}=\la J_{z}Ax,Ay\ra_{V_{min}^{r,s;-}}
=-\la J_{z}Ax,Ay\ra_{V_{min}^{r,s;+}}
\\
&=&-\la z,[Ax,Ay]^{+}\ra_{r,s}.
\end{eqnarray*}
It shows that $A\oplus -\Id$ is an automorphism of $\n_{r,s}(V_{min}^{r,s;+})$. Now assuming that $A\oplus -\Id\in\Aut^0\big(\n_{r,s}(V_{min}^{r,s;+})\big)$, we obtain
\begin{eqnarray*}
\la z,[x,y]^{+}\ra_{r,s}&=&\la -z,[Ax,Ay]^{+}\ra_{r,s}=-\la J_{z}Ax,Ay\ra_{V_{min}^{r,s;+}}
=\la J_{z}Ax,Ay\ra_{V_{min}^{r,s;-}}
\\
&=&\la z,[Ax,Ay]^{-}\ra_{r,s}.
\end{eqnarray*}
Thus $A\oplus \Id\colon \n_{r,s}(V_{min}^{r,s;+})\to\n_{r,s}(V_{min}^{r,s;-})$ is an isomorphism. 
\end{proof}


\subsection{Existence of lattices on pseudo $H$-type Lie groups}\label{sec:lattice}


To achieve the full description of isomorphic Lie algebras $\n_{r,s}(U)$, where  
the admissible module $U$ is not necessarily minimal,
we need a special type of bases for the Clifford modules. These type of bases
also show the existence of lattices on the corresponding Lie groups, see~\cite{FM}.
It is enough to construct the bases only for minimal admissible modules, and then apply Proposition~\ref{periodicity by tensor product}.

Let $V^{r,s}_{min}$ be a minimal admissible module of 
the Clifford algebra $\Cl_{r,s}$ and $E_{r,s}$ be the common
1-eigenspace for the system $PI_{r,s}$ of involutions. 
We fix a vector $v\in E_{r,s}$ such that
$|\la v,v\ra_{V^{r,s}_{min}}|=1$. Then a basis $\{x_i\}_{i=1}^{N}$
of the module $V^{r,s}_{min}$ can be chosen by setting 
$$
x_{1}=v,\quad x_{2}=J_{z_1}v,\ldots, \quad x_{r+s}=J_{z_{r+s-1}}v,\ldots,\quad
x_{N}=J_{z_{i_{1}}}J_{z_{i_2}}\ldots J_{z_{i_{k}}}v,
$$
that is a subset of all the $2^{r+s}$ vectors obtained from $v$ by action of $J_{z_{i_1}}\ldots J_{z_{i_k}}$, 
$1\leq i_{i}<i_{2}<\cdots< i_{k}\leq r+s$.
The vector $v\in E_{r,s}$ can be picked up in such a way that
the basis in $E_{r,s}$ will be orthonormal due to the following proposition.
\begin{proposition}\cite[Lemma 2.9, Corollary 2.10]{FM}\label{orthogonal}
Let $(V,\la\cdot\,,\cdot\ra_V)$ be an admissible module, $\Lambda_1,\ldots,\Lambda_l$ symmetric linear operators on $V$ such that 
\begin{itemize}
\item[$(1)$] $\Lambda^2_k=-\Id_V$, $k=1,\ldots,l$;
\item[$(2)$] $\Lambda_k\Lambda_j=-\Lambda_j\Lambda_k$ for all $k,j=1,\ldots,l$.
\end{itemize}
Then for any $w\in V$ with $\la w,w\ra_V=1$ there is a vector $\tilde w$ satisfying
$
\la\tilde w,\Lambda_k\tilde w\ra_V=0$ and $\la\tilde w,\tilde w\ra_V=1$, for $k=1,\ldots,l$.
\end{proposition}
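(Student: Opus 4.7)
The plan is to argue by induction on $l$. The base case $l=1$ reduces to finding, inside the plane $\spn\{w,\Lambda_1 w\}$, a unit positive vector $\tilde w$ orthogonal to $\Lambda_1\tilde w$. Since $\Lambda_1$ is symmetric with $\Lambda_1^2=-\Id_V$, one computes $\la \Lambda_1 w,\Lambda_1 w\ra_V=-1$, so this plane is Lorentzian (Gram determinant $-(1+c^2)<0$, with $c=\la w,\Lambda_1 w\ra_V$). The ansatz $\tilde w=aw+b\Lambda_1 w$ produces two scalar equations in $a,b$ depending on $c$ that are routinely solved, the solvability coming from the non-degeneracy of the plane.

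For the inductive step, assume the claim holds for $l-1$ operators and apply it to $\Lambda_1,\ldots,\Lambda_{l-1}$ to get a unit vector $\hat w$ with $\la \hat w,\Lambda_k\hat w\ra_V=0$ for $k<l$. Then perform the same two-dimensional construction inside $\spn\{\hat w,\Lambda_l\hat w\}$ to obtain $\tilde w=a\hat w+b\Lambda_l\hat w$ with $\la \tilde w,\tilde w\ra_V=1$ and $\la \tilde w,\Lambda_l\tilde w\ra_V=0$.

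The crucial step is to verify that this last rotation preserves the previously arranged orthogonalities. For $k<l$, the $a^2$ term in the expansion of $\la \tilde w,\Lambda_k\tilde w\ra_V$ vanishes by the inductive hypothesis. The mixed term
\[
ab\bigl(\la \hat w,\Lambda_k\Lambda_l\hat w\ra_V+\la \Lambda_l\hat w,\Lambda_k\hat w\ra_V\bigr)
\]
collapses to $ab\,\la \hat w,(\Lambda_k\Lambda_l+\Lambda_l\Lambda_k)\hat w\ra_V=0$ via the symmetry of $\Lambda_l$ and the anti-commutation $\Lambda_k\Lambda_l=-\Lambda_l\Lambda_k$; and the $b^2$ term similarly reduces, using $\Lambda_l\Lambda_k\Lambda_l=-\Lambda_k\Lambda_l^2=\Lambda_k$, to $b^2\la \hat w,\Lambda_k\hat w\ra_V=0$.

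The main obstacle is precisely this compatibility check. The planar construction and the inductive frame are straightforward to set up, but it is the anti-commutation hypothesis (2), together with the symmetry of the $\Lambda_j$, that allows the hyperbolic rotation in $\spn\{\hat w,\Lambda_l\hat w\}$ to act trivially on all previously established relations; without this algebraic cancellation the induction would break at the final step.
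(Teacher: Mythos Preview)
The paper does not prove this proposition; it simply cites \cite[Lemma 2.9, Corollary 2.10]{FM}. Your inductive argument is correct and is essentially the standard proof: the base case is a hyperbolic rotation in the Lorentzian plane $\spn\{w,\Lambda_1 w\}$ (note $\la\Lambda_1 w,\Lambda_1 w\ra_V=\la w,\Lambda_1^2 w\ra_V=-1$, so the Gram determinant $-1-c^2$ guarantees signature $(1,1)$ and solvability of the two scalar equations), and the inductive step uses precisely the anti-commutation $\Lambda_k\Lambda_l=-\Lambda_l\Lambda_k$ (for $k\neq l$) together with symmetry of $\Lambda_l$ to show the new rotation does not disturb the earlier orthogonalities. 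One minor point you leave implicit but which is worth stating explicitly: the plane $\spn\{\hat w,\Lambda_l\hat w\}$ is genuinely two-dimensional, since $\Lambda_l\hat w=\lambda\hat w$ would force $\lambda^2=-1$.
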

Since the involutions are symmetric all the eigenspaces are mutually orthogonal, that implies the orthonormality of the constructed basis. The construction of the basis also shows that
$
\la J_{z_i}x_{j}, x_{k}\ra_{V^{r,s}_{min}}=\la z_{i}, [x_{j}, x_{k}]\ra_{r,s}=\pm 1$ or $0$.
It follows that the structure constants of the Lie algebra $\n_{r,s}(U)$ are $\pm 1$ or 0. The concrete construction of bases for $\n_{r,s}(U)$ can be found in~\cite{FM}, see also~\cite{CD}. Applying the Mal\'cev criterion~\cite{Malcev}, we obtain the proposition.

\begin{proposition}\cite{Malcev}\label{existence of lattice}
Let $U$ be an admissible module of a Clifford algebra
$\Cl_{r,s}$. Then
there exists a lattice on the pseudo $H$-type Lie group $G_{r,s}(U)$. 
\end{proposition}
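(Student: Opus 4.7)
The plan is to apply Malcev's criterion: a simply connected nilpotent Lie group admits a lattice iff its Lie algebra possesses a basis with rational structure constants. Since the paper has already noted that the structure constants are $\pm 1$ or $0$ in the basis it constructs for a minimal admissible module, the task reduces to (i) producing such a basis for \emph{every} admissible module, and (ii) verifying Malcev's rationality condition.

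First I would handle the minimal admissible case. Starting from Proposition~\ref{orthogonal}, I pick a unit vector $v\in E_{r,s}$ that is orthogonal to $\Lambda_k v$ for a maximal family of mutually anti-commuting symmetric isometries $\Lambda_k$ built from products of the $J_{z_i}$; together with the system $PI_{r,s}$, this yields an orthonormal basis of $E_{r,s}$, and then the vectors $J_{z_{i_1}}\cdots J_{z_{i_k}}v$ with $1\le i_1<\cdots <i_k\le r+s$ (reduced modulo the $p_{r,s}$ relations coming from $PI_{r,s}$) give a basis of $V^{r,s}_{min}$ of the correct cardinality $2^{r+s-p_{r,s}}$. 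By~\eqref{def of skew-symmetric bi-linear form}, the bracket coefficients are
\[
\la z_i,[x_j,x_k]\ra_{r,s}=\la J_{z_i}x_j,x_k\ra_{V^{r,s}_{min}},
\]
and since $J_{z_i}x_j$ is, up to sign, again one of the basis vectors (or, after using the $PI_{r,s}$-relations, a basis vector with sign $\pm 1$), and the basis is orthonormal, each of these pairings is $\pm 1$ or $0$.

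Next I would bootstrap to arbitrary signatures via Proposition~\ref{periodicity by tensor product}: any minimal admissible module $V^{r,s}_{min}$ can be written as a tensor product of modules associated with the basic cases, and the tensor product of bases of the type above is again a basis with structure constants $\pm 1,0$, as the action of $J_{z_j}\otimes J_{\Omega^{\mu,\nu}}$ and $\Id\otimes J_{\zeta_i}$ merely permutes basis vectors up to signs.

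Finally, for a general admissible $U$, Proposition~\ref{properties of admissible module}(2) allows an orthogonal decomposition $U=\bigoplus_{i} V^{r,s}_{min,i}$ into minimal admissible summands. The bracket $[x,y]$ vanishes whenever $x,y$ lie in distinct summands, because for such $x,y$ and every $z$ one has $\la J_z x,y\ra_U=0$ by orthogonality of the admissible scalar products on the summands together with the fact that $J_z$ preserves each summand. Hence the union of bases of the $V^{r,s}_{min,i}$, together with an orthonormal basis of the centre $\mathbb R^{r,s}$, produces a basis of $\n_{r,s}(U)$ whose structure constants lie in $\{-1,0,1\}\subset\mathbb Q$. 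Applying Malcev's theorem~\cite{Malcev} to the simply connected $2$-step nilpotent group $G_{r,s}(U)$ yields the desired lattice.

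The only step that requires real care is the claim that in the constructed basis of a minimal admissible module the structure constants are truly $\pm 1$ or $0$: one must control the signs produced when the $p_{r,s}$ involutions of $PI_{r,s}$ are used to reduce a product $J_{z_{i_1}}\cdots J_{z_{i_k}}v$ to a basis vector. This is where the detailed choice of $PI_{r,s}$ and the complementary operators $CO_{r,s}$ from Section~\ref{app} enters; once it is granted (as in~\cite{FM,CD}), the rest of the argument is the straightforward assembly above.
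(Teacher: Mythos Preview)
Your proposal is correct and follows essentially the same route as the paper: build an orthonormal basis of each minimal admissible module from a vector in the common $1$-eigenspace, observe that the structure constants are $\pm 1$ or $0$, and invoke Mal\'cev's criterion. The paper's argument is the paragraph immediately preceding the proposition; your version simply spells out two points the paper leaves implicit, namely the orthogonal decomposition of a general $U$ into minimal admissible summands and the vanishing of brackets between distinct summands.
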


 
\subsection{Classification of pseudo $H$-type Lie algebras $\n_{r,s}(V_{min}^{r,s})$}


The classification of the pseudo $H$-type algebras $\n_{r,s}(V^{r,s}_{min})$, constructed from the minimal admissible modules was done in~\cite{FM1}. We summarise the results of the classification in Table~\ref{t:step1}.
\begin{table}[h]
\center\caption{Classification result after the first step}
\vskip0.3cm
\begin{tabular}{|c||c|c|c|c|c|c|c|c|c|}
\hline
$8$&$\cong$&&&&&&&&
\\
\hline
7&d&d&d&$\not\cong$&&&&&
\\
\hline
6&d&$\cong$&$\cong$&h&&&&&
\\
\hline
5&d&$\cong$&$\cong$&h&&&&&
\\
\hline
4&$\cong$&h&h&h&$\circlearrowright$&&&&\\\hline
3&d&$\not\cong$&$\not\cong$&$\not\circlearrowright$&d&d&d&$\not\cong$&\\\hline
2&$\cong$&h&$\circlearrowright$&$\not\cong$&d&$\cong$&$\cong$&h&$$\\\hline
1&$\cong$&$\circlearrowright$&d&$\not\cong$&d&$\cong$&$\cong$&h&$$\\\hline
0&&$\cong$&$\cong$&h&$\cong$&h&h&h&$\cong$\\\hline\hline
$s/r$&0&1&2&3&4&5&6&7&8\\
\hline
\end{tabular}\label{t:step1}
\end{table}
Here ``d'' stands for ``double'', meaning that $\dim V^{r,s}_{min}=2\dim
V^{s,r}_{min}$ and
``h'' (half) means that $\dim V^{r,s}_{min}=\frac{1}{2}\dim V^{s,r}_{min}$. 
The corresponding pairs are trivially non-isomorphic due to the 
different dimension of minimal admissible modules. 
The symbol $\cong$ denotes the Lie algebra $\n_{r,s}(V^{r,s}_{min})$ having 
isomorphic counterpart $\n_{s,r}(V^{s,r}_{min})$, the symbol $\not\cong$ shows that the Lie algebra $n_{r,s}(V^{r,s}_{min})$ is not  
isomorphic to $\n_{s,r}(V^{s,r}_{min})$. The notation $\circlearrowright$ indicates that 
the Lie algebra $\n_{r,r}(V^{r,r}_{min})$ admits automorphisms $A\oplus\Id$, 
and $\not\circlearrowright$ denotes the Lie algebra $\n_{r,r}(V^{r,r}_{min})$ that does not have this type of automorphism.


\section{Classification of pseudo $H$-type algebras}\label{sec:Classification}


In this section we state and prove the
classification of the pseudo $H$-type algebras $\n_{r,s}(U)$
with an arbitrary admissible modules $U$,
and fixed signature $(r,s)$. Eventually, the classification depends on the decomposition of $U$ on the minimal admissible modules. It is enough to classify basic cases~\eqref{basic cases} due to Theorem~\ref{th:periodic}. 
%
%
%
%


\subsection{Statements of main results on isomorphisms of Lie algebras $\n_{r,s}(U)$}\label{sec:DecomAdmMod}


In the rest of the paper we use the upper index $\pm$ to indicate
the scalar products that differ by sign: $V_{min}^{r,s;+}=(V_{min}^{r,s},\la\cdot\,,\cdot\ra_{V_{min}^{r,s}})$ 
and $V_{min}^{r,s;-}=(V_{min}^{r,s},-\la\cdot\,,\cdot\ra_{V_{min}^{r,s}})$. 
We also use the lower index $\pm$ 
to distinguish the minimal admissible modules, corresponding to non equivalent irreducible modules,
$V_{min;\pm}^{r,s;+}=(V_{min;\pm}^{r,s},\la\cdot\,,\cdot\ra_{V_{min;\pm}^{r,s}})$ 
and $V_{min;\pm}^{r,s;-}=(V_{min;\pm}^{r,s},-\la\cdot\,,\cdot\ra_{V_{min;\pm}^{r,s}})$ that were mentioned in Corollary~\ref{r-s = 3 mod 4 (2)}.  

Recall that Clifford modules are completely reducible, see
Proposition~\ref{prop:reducibility}
and any admissible module can be
decomposed into the orthogonal sum of minimal admissible modules, see
Proposition~\ref{properties of admissible module}, item~(2). To make the complete classification we decompose an admissible module $U$ of the Clifford algebra $\Cl_{r,s}$ into the direct sum of, possibly different, minimal admissible modules. We distinguish the following possibilities. 
\begin{itemize}
\item[$$] {If $r-s\not\equiv 3 \,(\text{mod}~4)$ and $s$ is arbitrary or $r-s\equiv 3 \,(\text{mod}~4)$ and $s$ is odd then 
\begin{equation}\label{eq:rsneq3}
U= \big(\stackrel{p^+}\oplus V_{min}^{r,s;+}\big)\bigoplus\big(\stackrel{p^-}\oplus V_{min}^{r,s;-}\big).
\end{equation}
}
\item[$$] {If $r-s\equiv 3\,(\text{mod}~4)$ and $s$ is even, then 
\begin{equation}\label{eq:rseq3}
U= \big(\stackrel{p^+_+}\oplus V_{min;+}^{r,s;+}\big)\bigoplus \big(\stackrel{p^-_+}\oplus V_{min;+}^{r,s;-}\big)
\bigoplus
\big(\stackrel{p^+_-}\oplus V_{min;-}^{r,s;+}\big)\bigoplus \big(\stackrel{p^-_-}\oplus V_{min;-}^{r,s;-}\big).
\end{equation}
}
\end{itemize} 
The system of involutions $PI_{r,s}$ does not depend on the scalar product on the admissible modules $V_{min}^{r,s;\pm}$ and therefore the common 1-eigenspaces $E_{r,s}$ coincide on $V_{min}^{r,s;+}$ and $V_{min}^{r,s;-}$. Nevertheless, the restrictions of the admissible scalar products from $V_{min}^{r,s;+}$ and $V_{min}^{r,s;-}$ on the respective $E_{r,s}$ will have opposite signs. The result of the classification essentially depends on the signature of the restriction of the admissible scalar product on $E_{r,s}$ and the parity of the index $s$. We formulate the main results of the classification.

\begin{theorem}\label{main theorem 1} Let $U=(U,\la\cdot\,,\cdot\ra_{U})$ and
$\tilde U=(\tilde{U},\la\cdot\,,\cdot\ra_{\tilde{U}})$
be admissible modules of a Clifford algebra $\Cl_{r,s}$.
If $r\equiv 0,1,2\,\text{mod}~4$, 
then the pseudo $H$-type Lie algebra $\n_{r,s}(U)$ 
is determined by the dimension
of the admissible module $U$ and does not depend on the choice of an admissible scalar product. Thus $\n_{r,s}(U)\cong \n_{r,s}(\tilde U)$, if and only if $\dim(U)=\dim(\tilde U)$.
\end{theorem}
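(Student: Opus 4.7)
The plan is to reduce the problem to showing that every admissible module $U$ of $\Cl_{r,s}$ with $r\equiv 0,1,2\,(\text{mod}~4)$ yields a Lie algebra isomorphic to $\n_{r,s}(p\,V^{r,s;+}_{min})$ for $p=\dim U/\dim V^{r,s}_{min}$. First I would orthogonally decompose $U$ and $\tilde U$ into minimal admissible summands via Proposition~\ref{prop:reducibility} and Proposition~\ref{properties of admissible module}(2), producing the forms~\eqref{eq:rsneq3} or~\eqref{eq:rseq3}. By Corollary~\ref{r-s = 3 mod 4 (2)}, within our range two non-equivalent minimal admissible modules occur only when $r\equiv 1\,(\text{mod}~4)$ and $s$ is even; in all other cases the minimal module is essentially unique modulo the sign of the scalar product. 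The single-summand sign change is handled by the isomorphism $\Id\oplus -\Id$ of Lemma~\ref{isomorphism between opposit sign scalar product}, so the core task is to merge mixed-sign (and, where relevant, mixed-class) direct sums into isotypic ones.

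The mechanism for this merging is the following. If $A\oplus -\Id\in \Aut^0(\n_{r,s}(V^{r,s;+}_{min}))$, then the identity $[Av,Aw]^{+}=-[v,w]^{+}=[v,w]^{-}$ shows that the block map $\Id\oplus\cdots\oplus A\oplus\cdots\oplus \Id\oplus \Id_{\mathbb R^{r,s}}$ is a Lie algebra isomorphism that swaps one $V^{r,s;-}_{min}$ factor for $V^{r,s;+}_{min}$ while leaving the other factors and the centre unchanged. For $r\equiv 0,2\,(\text{mod}~4)$, Corollary~\ref{-Id included} supplies this automorphism outright; iterating it gives $\n_{r,s}(U)\cong\n_{r,s}(p\,V^{r,s;+}_{min})$ and settles the even case of the theorem.

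For $r\equiv 1\,(\text{mod}~4)$, Corollary~\ref{-Id included} is unavailable, and when $s$ is even I must additionally identify $\n_{r,s}(V^{r,s}_{min;+})$ with $\n_{r,s}(V^{r,s}_{min;-})$ and merge mixed-class summands. Proposition~\ref{automorohisms from Clifford algebra} lifts $\SO(r)\times\Orth(s)$ into $\Aut^0(\n_{r,s}(V^{r,s;+}_{min}))$ via $\mathcal{A}(\Spin(r)\times\Pin(s))$; for even $s$ this already provides an automorphism whose centre action is $\Id_{\mathbb R^{r,0}}\oplus -\Id_{\mathbb R^{0,s}}$, and combining it with a $\pi$-rotation from $\Spin(r)$ on two positive basis vectors (available once $r\geq 5$, with the case $r=1$ analysed directly from the small Clifford structure) yields the required centre-reversing automorphism, if necessary after allowing a non-block-diagonal $A$. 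The identification $\n_{r,s}(V^{r,s}_{min;+})\cong\n_{r,s}(V^{r,s}_{min;-})$ uses Theorem~\ref{th:common_sign}: since $s\equiv 2\,(\text{mod}~4)\not\equiv 0\,(\text{mod}~4)$, the restriction of the scalar product to $E_{r,s}$ is neutral, so I may choose an isometric bijection $A_1\colon E_{r,s}\to\tilde E_{r,s}$ and extend it to a full module isomorphism by the prescription of Example~\ref{ex:2}. Finally, Proposition~\ref{periodicity by tensor product} together with Lemma~\ref{lem:periodic} reduces the verification to the basic cases~\eqref{basic cases}. The hardest step will be producing the centre-reversing automorphism when $r$ is odd, because the lift $\mathcal{A}(\Spin(r)\times\Pin(s))$ only covers $\SO(r)\times\Orth(s)$ and $-\Id\in\Orth(r,s)$ is not in this subgroup for odd $r$; overcoming this obstruction will likely require exploiting automorphisms of $\Aut^0(\n_{r,s}(V^{r,s;+}_{min}))$ that lie outside the image of $\mathcal{A}$, analogously to the exceptional automorphism $A\oplus I_1\in\Aut^0(\n_{8,0})$ highlighted in Section~\ref{Lie algebra automorphism}, and a careful inspection of the tables of Section~\ref{app} case by case.
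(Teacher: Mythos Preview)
Your overall architecture---decompose into minimal admissible summands, produce for each summand a Lie-algebra isomorphism of the form $A\oplus\Id$ to a fixed reference summand, and assemble block-diagonally---matches the paper, and your treatment of $r\equiv 0,2\,(\text{mod}~4)$ via Corollary~\ref{-Id included} and of the neutral-$E_{r,s}$ cases via an isometric identification of common $1$-eigenspaces is essentially what the paper does in Lemmas~\ref{lem:isom411}, \ref{uniqueness(0)-2}, and~\ref{lem:121652}.

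The genuine gap is the case $r\equiv 1\,(\text{mod}~4)$, $s\equiv 0\,(\text{mod}~4)$, where $E_{r,s}$ is sign-definite (Theorem~\ref{th:common_sign}) and you therefore need the automorphism $A\oplus -\Id$ of $\n_{r,s}(V^{r,s;+}_{min})$. You correctly observe that $-\Id_{\mathbb{R}^{r,s}}$ lies outside $\SO(r)\times\Orth(s)$ for odd $r$, so $\mathcal A(\Spin(r)\times\Pin(s))$ cannot supply it; your suggestion of combining $\pi$-rotations in $\Spin(r)$ with $-\Id$ from $\Pin(s)$ still misses one positive direction and does not close the gap. Appealing vaguely to automorphisms outside the image of $\mathcal A$ is not a plan: for the basic cases $(5,0)$ and $(1,4)$ you would have to exhibit such an automorphism concretely.

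The paper resolves this by a descent trick that you should adopt. For $(5,0)$ one has $\dim V^{5,0}_{min}=\dim V^{6,0}_{min}=8$, so $V^{5,0;+}_{min}$ can be realised as the $\Cl_{5,0}$-module obtained by restricting the $\Cl_{6,0}$-action on $V^{6,0}_{min}$ through $\mathbb R^{5,0}\subset\mathbb R^{6,0}$. Since $6$ is even, Corollary~\ref{-Id included} gives $A\oplus -\Id_{\mathbb R^{6,0}}\in\Aut^0(\n_{6,0}(V^{6,0}_{min}))$; composing with the Lie-algebra homomorphism $\Id\oplus\pi\colon\n_{6,0}(V^{6,0}_{min})\to\n_{5,0}(V^{5,0}_{min})$ induced by the orthogonal projection $\pi\colon\mathbb R^{6,0}\to\mathbb R^{5,0}$ shows that the \emph{same} $A$ satisfies $A^\tau J_z A=J_{-z}$ for all $z\in\mathbb R^{5,0}$, hence $A\oplus -\Id_{\mathbb R^{5,0}}\in\Aut^0(\n_{5,0}(V^{5,0;+}_{min}))$. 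The case $(1,4)$ is handled identically by descending from $(2,4)$, and $(1,0)$ is trivial. This is the missing ingredient in your proposal.
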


If $r\equiv 3\,(\text{mod}~4)$, then the pseudo $H$-type Lie algebra $\n_{r,s}(U)$  
is determined by the dimension of $U$ and by the value of the index $s$.

\begin{theorem}\label{main theorem 2}
Let $U=(U,\la\cdot\,,\cdot\ra_{U})$ and
$\tilde U=(\tilde{U},\la\cdot\,,\cdot\ra_{\tilde{U}})$
be admissible modules of a Clifford algebra $\Cl_{r,s}$.
Let $r\equiv 3\,(\text{\em mod}~4)$ and $s\equiv 0\,(\text{\em mod}~4)$ 
and let the admissible modules be decomposed into the direct sums: 
\begin{align*}
&U= (\stackrel{p^+_+}\oplus V_{min;+}^{r,s;+})\bigoplus(\stackrel{p^{-}_+}\oplus V_{min;+}^{r,s;-})
\bigoplus (\stackrel{p^+_-}\oplus V_{min;-}^{r,s;+})\bigoplus(\stackrel{p^{-}_-}\oplus V_{min;-}^{r,s;-}),
\\
&\tilde{U}=(\stackrel{\tilde{p}^+_+}\oplus V_{min;+}^{r,s;+})\bigoplus(\stackrel{\tilde{p}^{-}_+}\oplus V_{min;+}^{r,s;-})
\bigoplus (\stackrel{\tilde{p}^+_-}\oplus V_{min;-}^{r,s;+})\bigoplus(\stackrel{\tilde{p}^{-}_-}\oplus V_{min;-}^{r,s;-}).
\end{align*}
Then the Lie algebras $\n_{r,s}(U)$ and $\n_{r,s}(\tilde{U})$ are isomorphic, if and only if,
\[
p=p^+_++p^{-}_-=\tilde{p}^+_++\tilde{p}^{-}_-=\tilde p\ \ \text{and}\ \ q=p^{-}_++ p^+_-=\tilde{p}^{-}_++\tilde{p}^+_-=\tilde q
\]
or 
\[
p=p^+_++p^{-}_-=\tilde{p}^{-}_++\tilde{p}^+_-=\tilde q\ \ \text{and}\ \ q=p^{-}_++ p^+_-=\tilde{p}^+_++\tilde{p}^{-}_-=\tilde p.
\]
\end{theorem}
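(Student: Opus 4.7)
The plan is to identify the signature of the admissible scalar product $\la\cdot\,,\cdot\ra_U$ restricted to the common 1-eigenspace $E^U_{r,s}$ of the system $PI_{r,s}$ acting on $U$ as a complete isomorphism invariant (up to ordering). By definition~\eqref{case ii}, $E^U_{r,s}$ is the joint 1-eigenspace of only the first $p_{r,s}-1$ involutions in $PI_{r,s}$ (those of types $\mathcal P_1,\mathcal P_2$, i.e.\ products of four $J_{z_i}$'s), so they preserve each minimal admissible summand in the decomposition~\eqref{eq:rseq3}. Thus $E^U_{r,s}$ is the orthogonal direct sum of the $E_{r,s}$'s coming from each summand. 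By Theorem~\ref{th:common_sign} each of these is sign-definite, and the proof of that theorem (which analyses the action of the last involution $J_{z_1}J_{z_2}J_{z_3}$, equivalently the volume form, on $V^{r,s}_{irr;\pm}$ together with the anti-isometric last complementary operator) shows that $V^{r,s;+}_{min;+}$ and $V^{r,s;-}_{min;-}$ contribute with one sign of $\la\cdot\,,\cdot\ra_U|_{E^U_{r,s}}$, while $V^{r,s;-}_{min;+}$ and $V^{r,s;+}_{min;-}$ contribute with the opposite sign. Hence the signature of $\la\cdot\,,\cdot\ra_U|_{E^U_{r,s}}$ equals $\dim(E_{r,s})\cdot(p,q)$ with $p=p^+_++p^-_-$ and $q=p^-_++p^+_-$.

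For the sufficiency direction, given $(p,q)=(\tilde p,\tilde q)$ I would choose bases of $E^U_{r,s}$ and $E^{\tilde U}_{r,s}$ adapted to the signature decomposition and pick a signature-preserving isometry $A_1\colon E^U_{r,s}\to E^{\tilde U}_{r,s}$. When the decompositions of $U$ and $\tilde U$ differ in grouping (for instance, when $p^+_+$ on one side must be matched with $p^-_-$ on the other), I would use an orientation-reversing $C$ on $\mathbb R^{r,s}$ that swaps the two irreducible types $V^{r,s}_{irr;\pm}$ to align these summands. Extending $A_1$ to $A\colon U\to\tilde U$ by conjugation with the Clifford action as in Example~\ref{ex:2} then produces $A\oplus C$ satisfying $A^\tau\tilde J_z A=J_{C^\tau z}$. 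The swap case $(p,q)=(\tilde q,\tilde p)$ reduces to the equal-signature case by first applying the isomorphism $\Id\oplus -\Id$ of Lemma~\ref{isomorphism between opposit sign scalar product}: since $r+s$ is odd here, $-\Id$ on the centre is orientation-reversing and consequently exchanges irreducible types, so it realises exactly the swap $(p,q)\leftrightarrow(q,p)$.

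For necessity, let $\Phi=A\oplus C\colon\n_{r,s}(U)\to\n_{r,s}(\tilde U)$ be any Lie algebra isomorphism. Since $r\ne s$, Proposition~\ref{Lie algebra isomorphism 2} forces $C^\tau C=\Id$, so $C$ is orthogonal on $\mathbb R^{r,s}$. Iterating the relation $AJ_{z_1}J_{z_2}=\tilde J_{Cz_1}\tilde J_{Cz_2}A$ on orthogonal pairs (together with its length-three analogue involving $A^\tau$) one sees that the system $PI_{r,s}$ on $U$ is conjugated by $A$ into a compatible system $\widetilde{PI}_{r,s}$ on $\tilde U$, so $A$ sends $E^U_{r,s}$ onto $E^{\tilde U}_{r,s}$. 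From $A^\tau AJ_zA^\tau A=J_z$ the positive symmetric operator $A^\tau A$ commutes with the Clifford action on each isotypic piece, and after the normalisation $|\det A^\tau A|=1$ of Proposition~\ref{Lie algebra isomorphism 2}(3) one deduces that $A|_{E^U_{r,s}}$ is, up to a uniform scalar, a signature-preserving map. When $\det C=+1$ the volume form action is preserved and $(p_U,q_U)=(p_{\tilde U},q_{\tilde U})$; when $\det C=-1$ the volume form action is negated, which reverses the pairing of scalar product sign with irreducible type and gives $(p_U,q_U)=(q_{\tilde U},p_{\tilde U})$. The main obstacle is precisely this last step: since $A$ is not a priori an isometry of the admissible scalar product, one must extract enough commutation data from Proposition~\ref{Lie algebra isomorphism 2} to force $A|_{E^U_{r,s}}$ to be signature-preserving up to a uniform scaling, so that the signature genuinely descends to a Lie algebra invariant.
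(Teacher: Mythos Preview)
Your proposed invariant is the wrong quantity. By the convention used in the proof of Lemma~\ref{lem:307034}, the upper index $\pm$ on $V^{r,s;\pm}_{min;*}$ records exactly the sign of the admissible scalar product restricted to $E_{r,s}$, and this sign is independent of the lower (irreducible-type) index. Hence the signature of $\la\cdot\,,\cdot\ra_U$ on $E^U_{r,s}$ equals $\dim(E_{r,s})\cdot(p^+_++p^+_-,\ p^-_++p^-_-)$, not $\dim(E_{r,s})\cdot(p,q)$ as you claim; your invariant cannot see the theorem's grouping at all. What actually pairs $V^{r,s;+}_{min;+}$ with $V^{r,s;-}_{min;-}$ is the product of the sign on $E_{r,s}$ with the eigenvalue of the volume form $\Omega^{r,s}$, equivalently the signature of the bilinear form $(x,y)\mapsto\la J_{\Omega^{r,s}}x,y\ra_U$ restricted to $E^U_{r,s}$. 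The paper's necessity computation (display~\eqref{eq:detC}) is precisely this: from $A^\tau\tilde J_wA=J_{C^\tau w}$ one gets $A^\tau\tilde\Omega^{r,s}A=\det(C^\tau)\,\Omega^{r,s}$, which on the common $1$-eigenspaces yields $\la Ax,Ay\ra_{\tilde V}=-\det(C^\tau)\,\la x,y\ra_V$ once the scalar actions $\tilde\Omega=\mp\Id$, $\Omega=\pm\Id$ on the two irreducible types are inserted. This resolves the gap you flag without having to control $A^\tau A$.

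Your sufficiency sketch also inverts the role of $\det C$. Matching a summand $V^{r,s;+}_{min;+}$ with a summand $V^{r,s;-}_{min;-}$ is achieved with $C=\Id$, not with an orientation-reversing $C$: Proposition~\ref{isomorphism V^{+}_{+}and V^{-}_{-}}, item~(1), builds such an $A\oplus\Id$ explicitly (choose $v$ with $\la v,v\ra=1$, $u$ with $\la u,u\ra=-1$, and set $A(J_{z_i}v)=-\tilde J_{z_i}u$), and the same proposition shows there is \emph{no} isomorphism with $\det C<0$ between these two. A global orientation-reversing $C$ would in any case flip irreducible types on all summands simultaneously, so it cannot target individual regroupings. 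The paper's route is: use these $A\oplus\Id$ block isomorphisms to normalise both $U$ and $\tilde U$ to the standard form $\big(\oplus^pV^{r,s;+}_{min;+}\big)\oplus\big(\oplus^qV^{r,s;+}_{min;-}\big)$, where $(p,q)$ are now just the irreducible multiplicities; the swap $(p,q)\leftrightarrow(q,p)$ then comes from $\Id\oplus(-\Id)$ as you correctly note, and no further isomorphisms exist by Corollary~\ref{isomorphism V^{+}_{+}and V^{-}_{-}Corollary}.
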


\begin{theorem}\label{main theorem 3}
Let $r\equiv 3\,(\text{\em mod}~4)$ and 
$s\equiv 1,2,3\,(\text{\em mod}~4)$ 
and let $U$ and $\tilde U$ be decomposed into the direct sums 
$$
U= (\stackrel{p^+}\oplus V_{min}^{r,s;+})\bigoplus (\stackrel{p^-}\oplus V_{min}^{r,s;-}),\qquad
\tilde{U}=(\stackrel{\tilde{p}^+}\oplus V_{min}^{r,s;+})\bigoplus(\stackrel{\tilde{p}^-}\oplus V_{min}^{r,s;-}).
$$
Then $\n_{r,s}(U)\cong \n_{r,s}(\tilde{U})$, if and only if
$p=p^+=\tilde{p}^+=\tilde p$ and $q=p^-=\tilde{p}^-=\tilde q$, or $p=p^+=\tilde{p}^-=\tilde q$ and $q=p^-=\tilde{p}^+=\tilde p$. 
\end{theorem}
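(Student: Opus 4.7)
\emph{``If'' direction.} For $p^{+}=\tilde p^{+}$, $p^{-}=\tilde p^{-}$, the two Lie algebras coincide up to relabelling of summands and there is nothing to prove. For $p^{+}=\tilde p^{-}$, $p^{-}=\tilde p^{+}$, apply Lemma~\ref{isomorphism between opposit sign scalar product} summand-wise: the global map $\Id\oplus-\Id$ converts every $V^{r,s;+}_{min}$ into a $V^{r,s;-}_{min}$ (and vice versa), producing the required Lie algebra isomorphism.

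\emph{``Only if'' direction: the strategy.} The plan is to exhibit a signature invariant of $\n_{r,s}(U)$ that recovers the unordered pair $\{p^{+},p^{-}\}$. Since $r\equiv 3\,(\text{mod}~4)$, Theorem~\ref{th:common_sign} tells us that the restriction of the admissible scalar product to the common $1$-eigenspace $E_{r,s}\subset V^{r,s}_{min}$ is sign definite; normalise the convention so that it is positive definite on $V^{r,s;+}_{min}$ and consequently negative definite on $V^{r,s;-}_{min}$. Letting $PI_{r,s}$ act summand-wise on $U$, I denote the resulting ``global'' common $1$-eigenspace by $E(U)=\bigoplus_{i}E_{r,s}^{(i)}$; its scalar product has signature $(p^{+}d,\,p^{-}d)$ with $d=\dim E_{r,s}$, and analogously $(\tilde p^{+}d,\,\tilde p^{-}d)$ on $\tilde U$. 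The key claim is that any isomorphism preserves this unordered signature.

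\emph{``Only if'' direction: execution.} Given $\Phi=A\oplus C$, normalise $C^{\tau}C=\pm\Id$ using Propositions~\ref{Lie algebra isomorphism 2} and~\ref{Lie algebra isomorphism 3}, and further simplify $C$ by composing with automorphisms from Proposition~\ref{automorohisms from Clifford algebra}. By Proposition~\ref{Lie algebra isomorphism 2}(2), $A$ intertwines every pair $J_{z_{i}}J_{z_{j}}$ with $\tilde J_{Cz_{i}}\tilde J_{Cz_{j}}$, hence conjugates each $\mathcal P_{1}$- and $\mathcal P_{2}$-type involution in $PI_{r,s}$ to the analogous involution acting on $\tilde U$, sending common eigenspaces to their counterparts. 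The single $\mathcal P_{3}$-type involution $P=J_{z_{1}}J_{z_{2}}J_{z_{3}}\in PI_{r,s}$ (always present because $r\equiv 3\,(\text{mod}~4)$) satisfies $APA^{-1}=\pm\tilde P$, and the two choices of sign correspond precisely to the two admissible outcomes of the theorem. Using the block-diagonal description of $A$ as in Example~\ref{ex:2}, $A$ is then determined by its restriction $A_{1}\colon E(U)\to E(\tilde U)$; the normalisation $|\det A^{\tau}A|=1$ together with~\eqref{admissibility condition 2} forces $A_{1}$ to be, up to a positive scalar, a signed isometry of the restricted scalar products. Matching signatures then yields $\{p^{+},p^{-}\}=\{\tilde p^{+},\tilde p^{-}\}$.

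\emph{Main obstacle.} The hard part is the precise control of the $\pm$ sign in $APA^{-1}=\pm\tilde P$, i.e.\ quantifying how $A$ transforms a \emph{single} Clifford generator rather than a product of two (the latter being covered directly by Proposition~\ref{Lie algebra isomorphism 2}(2)). This one residual sign is exactly what allows the two admissible outcomes, and it must be tracked uniformly across all summands so that the $+$ and $-$ contributions on $U$ and $\tilde U$ pair up in a single global swap. The special case $r=s$ (forcing $r=s\equiv 3\,(\text{mod}~4)$, with $C^{\tau}C=-\Id$ from Proposition~\ref{Lie algebra isomorphism 3}) is handled by first composing with a suitable automorphism to reduce to $C^{\tau}C=+\Id$, after which the same signature argument applies.
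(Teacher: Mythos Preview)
Your ``if'' direction is fine and matches the paper. Your ``only if'' direction, however, takes a genuinely different route from the paper and contains a real gap at exactly the point you flag as the main obstacle.

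\textbf{What the paper does.} The paper never tries to track the action of $A$ on the $\mathcal P_3$--type involution directly. Instead it \emph{reduces to Theorem~\ref{main theorem 2}} by restriction of the centre. Concretely, for $r=3$ (the case $r=7$ is identical) one uses the inclusion $\mathbb R^{3,0}\subset\mathbb R^{3,s}$ to regard each $V^{3,s;\pm}_{min}$ as an admissible $\Cl_{3,0}$--module, and computes that $V^{3,s;+}_{min}\cong \big(\stackrel{\alpha(s)}\oplus V^{3,0;+}_{min;+}\big)\oplus\big(\stackrel{\alpha(s)}\oplus V^{3,0;-}_{min;-}\big)$ while $V^{3,s;-}_{min}$ gives the opposite pairing; Proposition~\ref{isomorphism V^{+}_{+}and V^{-}_{-}} then collapses these to $\stackrel{2\alpha(s)}\oplus V^{3,0;+}_{min;+}$ and $\stackrel{2\alpha(s)}\oplus V^{3,0;+}_{min;-}$ respectively. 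Any isomorphism $A\oplus C\colon\n_{3,s}(U)\to\n_{3,s}(\tilde U)$ descends through the projection $\pi\colon\mathbb R^{3,s}\to\mathbb R^{3,0}$ (commutative diagram~\eqref{CD1}) to an isomorphism of the associated $\n_{3,0}$ algebras, and there Theorem~\ref{main theorem 2} (already proved, using that the $s=0$ scalar product is \emph{definite}, whence $A^\tau=\pm{}^tA$) forces $\{p^+,p^-\}=\{\tilde p^+,\tilde p^-\}$. The definiteness at $s=0$ is precisely what dissolves your obstacle.

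\textbf{Where your argument breaks.} Your key unproved step is the assertion $APA^{-1}=\pm\tilde P$ for $P=J_{z_1}J_{z_2}J_{z_3}$, together with the claim that $A_1=A|_{E(U)}$ is a signed isometry. Neither follows from the normalisation $|\det A^\tau A|=1$. From $A^\tau\tilde J_{w}A=J_{C^\tau w}$ and Proposition~\ref{Lie algebra isomorphism 2}(1) one obtains only $AP=\tilde P'(A^\tau)^{-1}$ with $\tilde P'=\tilde J_{Cz_1}\tilde J_{Cz_2}\tilde J_{Cz_3}$, i.e.\ $A^\tau\tilde P'A=P$; hence $APA^{-1}=\tilde P'(AA^\tau)^{-1}$, and $AA^\tau$ need not be $\pm\Id$ when $\tilde U$ is not minimal. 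Consequently $A$ does not map eigenspaces of $P$ to eigenspaces of $\tilde P$, and your block description of $A$ via $A_1$ does not go through as stated.

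\textbf{How your approach can be salvaged.} The relation $A^\tau\tilde P'A=P$ is itself enough if you change the invariant: it says that the symmetric bilinear forms $\la P\,\cdot\,,\cdot\ra_U$ and $\la\tilde P'\,\cdot\,,\cdot\ra_{\tilde U}$ are congruent via $A$, hence have equal signature. One then checks from Tables~\ref{tab:com7s} (the complementary operator to $P$ is always an anti-isometry there) that $\la P\,\cdot\,,\cdot\ra$ is positive definite on $V^{r,s;+}_{min}$ and negative definite on $V^{r,s;-}_{min}$, so this signature encodes $\{p^+,p^-\}$. Finally, composing with automorphisms from Proposition~\ref{automorohisms from Clifford algebra} reduces $C|_{\mathbb R^{r,0}}$ to $\SO(r)$ up to a single reflection, giving $\tilde P'=\pm\tilde P$ and hence $\{p^+,p^-\}=\{\tilde p^+,\tilde p^-\}$. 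This is in spirit the computation~\eqref{eq:detC} that the paper carries out with the volume form in the $s\equiv 0$ case; the paper avoids redoing it here by the reduction argument above.
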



\subsection{Periodicity of isomorphisms}\label{periodicity of iso}


We can reduce the proof of the main theorems to basic cases~\eqref{basic cases}, due to the following facts. Let $V_{min}^{\mu,\nu}$ be a minimal admissible module of the
Clifford algebra $\Cl_{\mu,\nu}$, where $(\mu,\nu)\in
\{(8,0),(0,8),(4,4)\}$. It was explained in Example~\ref{ex:inv44} that
$V_{min}^{\mu,\nu}$ admits decomposition~\eqref{eq:decom08}. The admissible scalar product restricted to $E_{\mu,\nu}$ is
necessarily sign definite and we can fix it to be positive
definite scalar product on $E_{\mu,\nu}$ by Lemma~\ref{isomorphism between opposit sign scalar product}. 
We summarise the results of Section~\ref{sec:Bott} and Lemma~\ref{lem:periodic}.

\begin{proposition}\label{periodicity all}
Let $(V^{r,s}_{min},\la\cdot\,,\cdot\ra_{V^{r,s}_{min}})$ 
be a minimal admissible module of $\Cl_{r,s}$ and
$J_{z_i}$, $i=1,\ldots,r+s$ 
the Clifford actions of the orthonormal basis $\{z_{i}\}$. Then
\begin{equation}\label{tensor product rep to direct sum}
V^{r,s}_{min}\otimes V_{min}^{\mu,\nu}
=(V^{r,s}_{min}\otimes E_{\mu,\nu})\bigoplus_{i=1}^{8}\big(V^{r,s}_{min}\otimes J_{\zeta_i}(E_{\mu,\nu})\big)\bigoplus_{j=2}^{8}\big(V^{r,s}_{min}\otimes J_{\zeta_1}J_{\zeta_{j}}(E_{\mu,\nu})\big)
\end{equation}
is a minimal admissible module $V^{r+\mu,s+\nu}_{min}$ of the Clifford algebra $\Cl_{r+\mu,s+\nu}$.

Conversely, if $V^{r+\mu,s+\nu}_{min}$ is a minimal admissible module of the algebra
$\Cl_{r+\mu,s+\nu}$, then the common 1-eigenspace $E_0$ of the involutions
$T_{i}$, $i=1,2,3,4$ from Example~\ref{ex:inv44} can be considered as a minimal admissible module $V^{r,s}_{min}$ of the algebra
$\Cl_{r,s}$. The action of the Clifford algebra $\Cl_{r,s}$ on
$E_{0}$ is the restricted action of $\Cl_{r+\mu,s+\nu}$ obtained by the natural inclusion 
$\Cl_{r,s}\subset \Cl_{r+\mu,s+\nu}$. 
\end{proposition}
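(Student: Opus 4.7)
The plan is to handle the two directions separately, relying on Example~\ref{ex:inv44}, Proposition~\ref{periodicity by tensor product}, and the decomposition~\eqref{eq:decom08}.

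For the forward direction, Proposition~\ref{periodicity by tensor product} already asserts that $V^{r,s}_{min}\otimes V^{\mu,\nu}_{min}$ is a minimal admissible module of $\Cl_{r+\mu,s+\nu}$ with the tensor-product scalar product. The explicit decomposition~\eqref{tensor product rep to direct sum} then follows by tensoring each summand of~\eqref{eq:decom08} with the factor $V^{r,s}_{min}$ and invoking distributivity of $\otimes$ over $\oplus$. The key point is that in the tensor-product representation one has $\hat J_{\zeta_\alpha}=\Id\otimes J_{\zeta_\alpha}$, so the involutions $T_i$ act only on the second tensor factor, and the common $1$-eigenspace of the $T_i$'s on $V^{r,s}_{min}\otimes V^{\mu,\nu}_{min}$ is exactly $V^{r,s}_{min}\otimes E_{\mu,\nu}$.

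For the converse direction, let $V^{r+\mu,s+\nu}_{min}$ be any minimal admissible module of $\Cl_{r+\mu,s+\nu}$ and let $E_0$ denote the common $1$-eigenspace of $T_1,\ldots,T_4$. The central observation is that each $T_i$ is a product of four generators $J_{\zeta_\alpha}$, and since $z_j\perp\zeta_\alpha$ in $\mathbb R^{r+\mu,s+\nu}$, the operator $J_{z_j}$ anticommutes with each $J_{\zeta_\alpha}$, hence commutes with every four-fold product $T_i$. Consequently the operators $\{J_{z_j}\}_{j=1}^{r+s}$ preserve $E_0$. I would then verify that the restricted operators satisfy $(J_{z_j}|_{E_0})^2=-\la z_j,z_j\ra_{r,s}\Id$ together with the anticommutation relations, inherited from the $\Cl_{r+\mu,s+\nu}$-action since the metric on $\mathbb R^{r,s}$ coincides with the restriction of the metric on $\mathbb R^{r+\mu,s+\nu}$. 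Skew-symmetry with respect to the inherited scalar product is automatic, and non-degeneracy of the scalar product on $E_0$ follows from the orthogonality of the sixteen common eigenspaces of the symmetric involutions $T_i$.

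To confirm minimality of $E_0$ as a $\Cl_{r,s}$-module, I would invoke the dimension formula from Section~\ref{sec:dimensions}: since the passage from $PI_{r,s}$ to $PI_{r+\mu,s+\nu}$ adds exactly the four involutions $T_1,\ldots,T_4$, one has $p_{r+\mu,s+\nu}=p_{r,s}+4$, giving $\dim V^{r+\mu,s+\nu}_{min}=16\dim V^{r,s}_{min}$ and therefore $\dim E_0=\dim V^{r,s}_{min}$. The main obstacle I anticipate is the careful bookkeeping of signs so that the restriction of the scalar product to $E_0$ matches the neutral versus sign-definite dichotomy of Lemma~\ref{lem:periodic}; but this is governed purely by the complementary operators in $CO_{\mu,\nu}$, whose behaviour on the tensor factors is transparent from Example~\ref{ex:inv44}.
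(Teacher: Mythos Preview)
Your proposal is correct and follows essentially the same route as the paper. In the paper the proposition is stated as a summary of Section~\ref{sec:Bott} and Lemma~\ref{lem:periodic} and carries no separate proof; the converse argument you give---that each $J_{z_j}$ commutes with the four-fold products $T_i$ and hence preserves $E_0$, followed by the dimension count $\dim E_0=\tfrac{1}{16}\dim V^{r+\mu,s+\nu}_{min}=\dim V^{r,s}_{min}$---is exactly the content of the proof of Lemma~\ref{lem:periodic} (see the decomposition~\eqref{decomposition by Jzeta}), and the forward direction is just Proposition~\ref{periodicity by tensor product} together with~\eqref{eq:decom08}, as you say.
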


\begin{proposition}\label{extension of automorphism}
According to the correspondence of minimal admissible modules 
stated in Proposition~\ref{periodicity all}, there
is a natural injective map
\[
\mathcal{B}\colon\Aut^{0}(\n_{r,s}(V^{r,s}_{min}))\to
\Aut^{0}(\n_{r+\mu,s+\nu}(V^{r+\mu,s+\nu}_{min})).
\]

Conversely, automorphisms of the form ${A}\oplus
C\in \Aut^{0}(\n_{r+\mu,s+\nu}(V^{r+\mu,s+\nu}_{min}))$ with the property that
$C(\zeta_{j})=\zeta_{j}$, $j=1,\ldots,8$,
defines an automorphism $A_{|E_{0}}\oplus C_{|\mathbb{R}^{r,s}}$ of 
the algebra $\n_{r,s}(E_0)$, where
the space $E_0$ is the common 1-eigenspace of the 
involutions $T_{j}$, $j=1, 2, 3, 4$, viewed as a minimal admissible module of $\Cl_{r,s}$.
\end{proposition}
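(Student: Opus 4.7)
The plan is to exploit the tensor decomposition of Proposition~\ref{periodicity all}. Identify $V^{r+\mu,s+\nu}_{min}$ with $V^{r,s}_{min}\otimes V^{\mu,\nu}_{min}$ and $\mathbb{R}^{r+\mu,s+\nu}$ with $\mathbb{R}^{r,s}\oplus\mathbb{R}^{\mu,\nu}$, so that the extended Clifford action reads $\hat J_{z_j}=J_{z_j}\otimes J_{\Omega^{\mu,\nu}}$ for $z_j\in\mathbb{R}^{r,s}$ and $\hat J_{\zeta_\alpha}=\Id_{V^{r,s}_{min}}\otimes J_{\zeta_\alpha}$ for $\zeta_\alpha\in\mathbb{R}^{\mu,\nu}$. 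Given $A\oplus C\in \Aut^0(\n_{r,s}(V^{r,s}_{min}))$, I set
\[
\mathcal{B}(A\oplus C):=(A\otimes\Id_{V^{\mu,\nu}_{min}})\oplus(C\oplus\Id_{\mathbb{R}^{\mu,\nu}}),
\]
after normalising so that $A^\tau A=\Id$. The intertwining relation $\hat A^\tau\hat J_w\hat A=\hat J_{\hat C^\tau w}$ is then checked case by case: for $w=z_j\in\mathbb{R}^{r,s}$ it reduces to $A^\tau J_{z_j}A=J_{C^\tau z_j}$ combined with $J_{\Omega^{\mu,\nu}}^2=\Id$; for $w=\zeta_\alpha\in\mathbb{R}^{\mu,\nu}$ it reduces to $A^\tau A\otimes J_{\zeta_\alpha}=\Id\otimes J_{\zeta_\alpha}$, which holds by the normalisation. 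The condition $\hat C^\tau\hat C=\pm\Id$ follows at once from $C^\tau C=\pm\Id$. Injectivity is immediate since the restriction of $\mathcal{B}(A\oplus C)$ to $V^{r,s}_{min}\otimes E_{\mu,\nu}$ and $\mathbb{R}^{r,s}$ recovers $A$ and $C$.

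For the converse, suppose $\hat A\oplus \hat C\in\Aut^0(\n_{r+\mu,s+\nu}(V^{r+\mu,s+\nu}_{min}))$ with $\hat C(\zeta_j)=\zeta_j$ for $j=1,\ldots,8$. Since $\hat C^\tau\hat C=\pm\Id$, one has $\hat C^\tau\zeta_j=\epsilon\zeta_j$ for a common sign $\epsilon\in\{\pm 1\}$, hence $\hat A^\tau\hat J_{\zeta_j}\hat A=\epsilon\hat J_{\zeta_j}$, equivalently $\hat A^{-1}\hat J_{\zeta_j}\hat A=\epsilon\hat B^{-1}\hat J_{\zeta_j}$, where $\hat B:=\hat A^\tau\hat A$. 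Proposition~\ref{Lie algebra isomorphism 2}(1) applied to the automorphism $\hat A\oplus\hat C$ yields $\hat B\hat J_w\hat B=\hat J_w$, equivalently $\hat B^{-1}\hat J_w=\hat J_w\hat B$. For each involution $T_j\in PI_{\mu,\nu}$ from Example~\ref{ex:inv44}, the direct computation
\[
\hat A^{-1}T_j\hat A=\epsilon^4\,(\hat B^{-1}\hat J_{\zeta_{i_1}})(\hat B^{-1}\hat J_{\zeta_{i_2}})(\hat B^{-1}\hat J_{\zeta_{i_3}})(\hat B^{-1}\hat J_{\zeta_{i_4}})=T_j
\]
succeeds after moving the four $\hat B^{-1}$-factors past the four $\hat J_{\zeta_{i_k}}$-factors in cancelling pairs using $\hat B^{-1}\hat J_{\zeta_{i_k}}=\hat J_{\zeta_{i_k}}\hat B$, together with $\epsilon^4=1$. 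Hence $\hat A$ commutes with every $T_j$ and so preserves the common $1$-eigenspace $E_0$. The restrictions $A:=\hat A|_{E_0}$ and $C:=\hat C|_{\mathbb{R}^{r,s}}$ then inherit the intertwining $A^\tau J_{z_j}A=J_{C^\tau z_j}$ from the $w=z_j$ case (using that $\hat J_{z_j}|_{E_0}$ coincides with the action of $J_{z_j}\in\Cl_{r,s}$ by Proposition~\ref{periodicity all}), so $A\oplus C\in\Aut^0(\n_{r,s}(E_0))$.

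The main obstacle lies in the forward direction, specifically the normalisation $A^\tau A=\Id$. Proposition~\ref{Lie algebra isomorphism 2}(1) only delivers the weaker relation $(A^\tau A)J_z(A^\tau A)=J_z$, which admits nontrivial symmetric positive solutions (for instance, diagonal $\mathrm{SL}(2,\mathbb{R})$ matrices realise automorphisms of the Heisenberg algebra $\n_{1,0}$ for which $A^\tau A\ne\Id$). Overcoming this either requires sharpening the normalisation using the concrete structure of the minimal admissible module for each signature, or replacing the naive tensor extension $A\otimes\Id$ by a decomposition-adapted extension that propagates $A$ across the summands of~\eqref{tensor product rep to direct sum} in an alternating manner, so that the extra factor $(\hat A^\tau\hat A)^{-1}$ cancels along each $\hat J_{\zeta_\alpha}$-strand; consistency among different strand choices then follows from the Clifford anticommutation relations.
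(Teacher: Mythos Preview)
Your argument for the converse is correct and in fact more careful than the paper's sketch: you show explicitly that $\hat A$ commutes with each involution $T_j$ (using $\hat A^{-1}\hat J_{\zeta_i}\hat A=\epsilon\hat B^{-1}\hat J_{\zeta_i}$ together with $\hat B^{-1}\hat J_{\zeta_i}=\hat J_{\zeta_i}\hat B$, so that the four $\hat B^{\pm 1}$-factors cancel in pairs), whence $\hat A$ preserves $E_0$. The paper merely invokes the commutativity of $J_{z_i}$ with the $T_j$ and the decomposition~\eqref{decomposition by Jzeta}. One small addition you might make explicit: since $\hat C$ fixes each $\zeta_j$ and $\hat C^\tau\hat C=\pm\Id$, the anti-isometry case is actually excluded (it would flip the sign of $\la\zeta_j,\zeta_j\ra$), so $\epsilon=+1$ and $\hat C$ preserves the orthogonal complement $\mathbb R^{r,s}$, making the restriction $C=\hat C|_{\mathbb R^{r,s}}$ well defined.

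For the forward direction you have correctly located the genuine gap: the naive extension $A\otimes\Id$ only satisfies~\eqref{isomorphism relation} on the $\zeta_\alpha$-generators when $A^\tau A=\Id$, and your Heisenberg example shows this normalisation cannot be forced in general. The paper does \emph{not} attempt option~(a); it takes exactly your option~(b). Namely, $\bar A$ is defined block-wise on the decomposition~\eqref{tensor product rep to direct sum} following the recipe of Example~\ref{ex:2}: one sets $\bar A=A^{r,s}$ on the component $V^{r,s}_{min}\otimes E_{\mu,\nu}$, then on each single-$J_{\zeta_j}$ layer one is forced to use $(A^{r,s})^{-\tau}$ (conjugated by $J_{\zeta_j}$) so that $\bar A^\tau\hat J_{\zeta_j}\bar A=\hat J_{\zeta_j}$ holds, and on each double-$J_{\zeta_1}J_{\zeta_j}$ layer one recovers $A^{r,s}$ again. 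The alternation $A\leftrightarrow(A^{-1})^\tau$ along the $J_{\zeta}$-strands is precisely what makes the unwanted factor $\hat B=A^\tau A$ cancel; consistency among different strands is then a direct consequence of the Clifford anticommutation relations, exactly as you anticipate. So to complete your proof you should replace the tentative $A\otimes\Id$ by this block definition and verify~\eqref{isomorphism relation} on both families of generators; the computation is the one spelled out in Example~\ref{ex:2}.
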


\begin{proof}
Let $A^{r,s}\oplus C\in\Aut^0(\n_{r,s}(V^{r,s}_{min}))$ with $(A^{r,s})^{\tau}J_{z_i}A^{r,s}=J_{C^{\tau}(z_i)}$, $i=1,\ldots,r+s$,
and let $J_{\zeta_j}$, $j=1,\ldots,8$, be the actions on
$V_{min}^{\mu,\nu}$ of the Clifford algebra $\Cl_{\mu,\nu}$.

We want to construct $\bar A\colon
V^{r+\mu,s+\nu}_{min}=V_{min}^{r,s}\otimes V_{min}^{\mu,\nu}\to
V^{r+\mu,s+\nu}_{min}=V_{min}^{r,s}\otimes V_{min}^{\mu,\nu}$ 
such that $z_i\mapsto C(z_i)$ and $\zeta_j\mapsto \zeta_j$  by using
the map $A^{r,s}\colon V^{r,s}_{min}\to V^{r,s}_{min}$. 

The action $\bar{J}$ on 
$V_{min}^{r,s}\otimes V_{min}^{\mu,\nu}$ 
defined in 
Proposition~\ref{periodicity by tensor product}
corresponds to
\[\left\{ 
\begin{array}{ll}
\bar{J}_{z_i}(x\otimes y)
=-J_{z_i}(x)\otimes y, &\quad x\in V^{r,s}_{min},\ \ y\in J_{\zeta_{j}}(E_{0}),\, j=1,\ldots,8,\\
\bar {J}_{z_i}(x\otimes y)
=J_{z_i}(x)\otimes y,&\quad x\in V^{r,s}_{min},\ \ y\in J_{\zeta_{1}}J_{\zeta_{j}}(E_{0}),\, j=2,\ldots,8,\\
\bar {J}_{\zeta_{i}}(x\otimes y)=x\otimes J_{\zeta_{j}}(y), &\quad x\in
                                                            V^{r,s}_{min},\
                                                            \ y\in E_{0},
\end{array}\right.
\]
according to the decomposition~\eqref{tensor product rep to direct sum}.
We define $\bar{A}\colon V_{min}^{r,s}\otimes V_{min}^{\mu,\nu}\to V_{min}^{r,s}\otimes V_{min}^{\mu,\nu}$
on each component of the decomposition~\eqref{tensor product rep to direct sum}
such that it satisfies~\eqref{isomorphism relation} with
$\bar{C}$ being $\bar{C}(z_{i})=C(z_{i})$ and $\bar{C}(\zeta_{j})=\zeta_{j}$.
It can be done in a unique way as in Example~\ref{ex:2} and the operator $\bar{A}\oplus\bar{C}$ will satisfy Proposition~\ref{Lie algebra isomorphism 2}.

Conversely, let $\bar A\oplus\bar C\in
\Aut^{0}(\n_{r+\mu,s+\nu}(V^{r+\mu,s+\nu}_{min}))$ 
be such that $\bar C(\zeta_{j})=\zeta_{j}$, $j=1,\ldots,8$.
Then $V^{r+\mu,s+\nu}_{min}$ is decomposed into the orthogonal 
sum~\eqref{decomposition by Jzeta} 
and the commutativity of the
operators $J_{z_i}$ with the involutions $T_{j}$ allows us to define an
automorphism $A_{|E_{0}}\oplus C_{|\mathbb{R}^{r,s}}$ of
the pseudo $H$-type algebra $\n_{r,s}(E_{0})$.
\end{proof}

Note that the construction given in Proposition~\ref{periodicity by tensor product} 
can be performed for an arbitrary, not necessary minimal admissible module $U^{r,s}$. Thus  we obtain that $U^{r+\mu,s+\nu}=U^{r,s}\otimes V^{\mu,\nu}_{min}$ is admissible for $(\mu,\nu)\in\{(8,0),(0,8),(4,4)\}$ if $U^{r,s}$ is admissible. Denote by $K_{r,s}(U^{r,s})$ the kernel of the map 
$$
\begin{array}{ccccc}
&\Aut^{0}(\n_{r,s}(U^{r,s})) &\longrightarrow &\Orth(r,s)
\\
& A\oplus C&\mapsto &C.
\end{array}
$$
\begin{corollary}
Let $U^{r,s}$ and $U^{r+\mu,s+\nu}=U^{r,s}\otimes V^{\mu,\nu}_{min}$ be admissible modules. 
Then 
\[
K_{r+\mu,s+\nu}(U^{r+\mu,s+\nu})=\mathcal{B}(K_{r,s}(U^{r,s})),
\]
that is the kernel $K_{r,s}(U^{r,s})$ is invariant under the map $\mathcal{B}$ defined in Proposition~\ref{extension of automorphism}.
\end{corollary}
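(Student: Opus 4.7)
The plan is to establish both inclusions of the claimed equality of subgroups separately. The inclusion $\mathcal{B}(K_{r,s}(U^{r,s}))\subseteq K_{r+\mu,s+\nu}(U^{r+\mu,s+\nu})$ is the easy direction: if $A\oplus\Id\in K_{r,s}(U^{r,s})$, then by the very construction of $\mathcal{B}$ in Proposition~\ref{extension of automorphism} (which is intrinsic and extends verbatim from the minimal case to a general admissible module $U^{r,s}$), the image $\mathcal{B}(A\oplus\Id)=\bar A\oplus\bar C$ has $\bar C|_{\mathbb{R}^{r,s}}=\Id$ and $\bar C(\zeta_j)=\zeta_j$ for $j=1,\ldots,8$. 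Thus $\bar C=\Id_{\mathbb{R}^{r+\mu,s+\nu}}$ and $\mathcal{B}(A\oplus\Id)$ lies in the kernel.

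For the reverse inclusion, I will start from an element $\bar A\oplus\Id\in K_{r+\mu,s+\nu}(U^{r+\mu,s+\nu})$. The identity map on $\mathbb{R}^{r+\mu,s+\nu}$ in particular fixes each $\zeta_j$, so the converse statement of Proposition~\ref{extension of automorphism} (extended naturally from $V^{r,s}_{min}$ to the general admissible module $U^{r,s}$) produces an automorphism of the form $\bar A|_{E_0}\oplus\Id|_{\mathbb{R}^{r,s}}$ of $\n_{r,s}(E_0)$, where $E_0$ is the common $1$-eigenspace of the four involutions $T_j$ from Example~\ref{ex:inv44}. The tensor decomposition $U^{r+\mu,s+\nu}=U^{r,s}\otimes V^{\mu,\nu}_{min}$ together with the one-dimensionality of $E_{\mu,\nu}$ (from Example~\ref{ex:inv44}) identifies $E_0$ with $U^{r,s}\otimes E_{\mu,\nu}\cong U^{r,s}$. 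Hence the restriction lies in $K_{r,s}(U^{r,s})$.

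The remaining and central step is to show that $\mathcal{B}(\bar A|_{E_0}\oplus\Id)=\bar A\oplus\Id$. The intertwining relation \eqref{isomorphism relation} with $\bar C=\Id$ gives $\bar A^{\tau}J_{\zeta_j}\bar A=J_{\zeta_j}$, and combining this with the relations in Proposition~\ref{Lie algebra isomorphism 2} for the products $J_{\zeta_1}J_{\zeta_j}$, one shows that $\bar A$ preserves each of the sixteen blocks appearing in the decomposition \eqref{tensor product rep to direct sum}. On each block $U^{r,s}\otimes J_{\zeta_j}(E_{\mu,\nu})$ or $U^{r,s}\otimes J_{\zeta_1}J_{\zeta_j}(E_{\mu,\nu})$, the restriction of $\bar A$ is then uniquely determined by $\bar A|_{E_0}$ through the same formulas $A_{j+1}=\tilde J_{\zeta_j}(A_1^{-1})^{\tau}J_{\zeta_j}^{-1}$ (and their analogues) that appear in Example~\ref{ex:2}. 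Since these coincide with the formulas used to define $\mathcal{B}$, uniqueness of the extension yields the required equality.

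The main obstacle lies in this last uniqueness verification. For a non-minimal admissible module $U^{r,s}$ the automorphism $\bar A$ need not split as a tensor product $A\otimes\Id$, so one must argue directly from the intertwining relations with $J_{\zeta_j}$ that $\bar A$ is block-diagonal with respect to the decomposition \eqref{tensor product rep to direct sum} and that its blocks match those produced by $\mathcal{B}$. Once this bookkeeping is carried out, the two inclusions combine to give $K_{r+\mu,s+\nu}(U^{r+\mu,s+\nu})=\mathcal{B}(K_{r,s}(U^{r,s}))$.
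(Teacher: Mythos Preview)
The paper does not give a separate proof of this corollary; it is stated immediately after Proposition~\ref{extension of automorphism} as a direct consequence of the two directions of that proposition (the forward map $\mathcal{B}$ and the converse restriction), together with the uniqueness clause ``It can be done in a unique way as in Example~\ref{ex:2}'' appearing in the proof of the proposition.

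Your argument is correct and makes explicit exactly what the paper leaves implicit. In particular, your identification of the ``remaining and central step'' --- that $\mathcal{B}(\bar A|_{E_0}\oplus\Id)=\bar A\oplus\Id$ --- is the right place to focus: the paper absorbs this into the uniqueness assertion in the proof of Proposition~\ref{extension of automorphism}, while you spell out that the intertwining relations $\bar A^{\tau}J_{\zeta_j}\bar A=J_{\zeta_j}$ and $\bar A J_{\zeta_1}J_{\zeta_j}=J_{\zeta_1}J_{\zeta_j}\bar A$ force $\bar A$ to be block-diagonal with respect to~\eqref{tensor product rep to direct sum} and that each block is determined by $\bar A|_{E_0}$ via the formulas of Example~\ref{ex:2}. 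Your remark that for a non-minimal $U^{r,s}$ the map $\bar A$ need not be a pure tensor $A\otimes\Id$ is a fair caveat, but it does not affect the argument: the block structure and the determination of each block by $\bar A|_{E_0}$ follow from the intertwining relations with the $J_{\zeta_j}$ alone, independently of any tensor-product form.
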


\begin{lemma}\label{lem:extisom}
If the Lie algebras $\n_{r,s}(V^{r,s}_{min})$ and  $\n_{r,s}(\tilde V^{r,s}_{min})$ are isomorphic, then the Lie algebras $\n_{r+\mu,s+\nu}(V^{r+\mu,s+\nu}_{min})$ and $\n_{r+\mu,s+\nu}(\tilde V^{r+\mu,s+\nu}_{min})$ are also isomorphic for $(\mu,\nu)\in \{(8,0),(0,8),(4,4)\}$.
\end{lemma}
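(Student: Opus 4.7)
The strategy is to mimic the construction of Proposition~\ref{extension of automorphism}, but starting from a Lie algebra isomorphism rather than an automorphism. Suppose $\Phi = A \oplus C \colon \n_{r,s}(V^{r,s}_{min}) \to \n_{r,s}(\tilde V^{r,s}_{min})$ is a Lie algebra isomorphism, so that $A^{\tau}\tilde J_{z}A = J_{C^{\tau}(z)}$ for every $z \in \mathbb R^{r,s}$. By Proposition~\ref{periodicity by tensor product}, both $V^{r+\mu,s+\nu}_{min} = V^{r,s}_{min} \otimes V^{\mu,\nu}_{min}$ and $\tilde V^{r+\mu,s+\nu}_{min} = \tilde V^{r,s}_{min} \otimes V^{\mu,\nu}_{min}$ are minimal admissible modules of $\Cl_{r+\mu,s+\nu}$. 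I would define the linear map $\bar C \colon \mathbb R^{r+\mu,s+\nu} \to \mathbb R^{r+\mu,s+\nu}$ on the chosen orthonormal basis by $\bar C(z_i) = C(z_i)$ for $i=1,\ldots,r+s$ and $\bar C(\zeta_j) = \zeta_j$ for $j=1,\ldots,8$. Since $C^{\tau}C = \pm \Id$ and $\bar C$ acts as the identity on the $(\mu,\nu)$-component, $\bar C$ respects the required orthogonal relations on $\mathbb R^{r+\mu,s+\nu}$.

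Next, using the decomposition~\eqref{tensor product rep to direct sum} of $V^{r+\mu,s+\nu}_{min}$ (and the corresponding decomposition of $\tilde V^{r+\mu,s+\nu}_{min}$ in terms of $E_{\mu,\nu} \cong \tilde E_{\mu,\nu}$), I would set
\[
\bar A_1 = A \otimes \Id_{E_{\mu,\nu}} \colon V^{r,s}_{min} \otimes E_{\mu,\nu} \to \tilde V^{r,s}_{min} \otimes E_{\mu,\nu},
\]
and then propagate $\bar A_1$ to the remaining $15$ summands exactly as in Example~\ref{ex:2}: on $V^{r,s}_{min} \otimes J_{\zeta_j}(E_{\mu,\nu})$ the value is forced by $\bar A\, \bar J_{\zeta_j} = \bar{\tilde J}_{\zeta_j}\,\bar A$, and similarly on $V^{r,s}_{min} \otimes J_{\zeta_1}J_{\zeta_j}(E_{\mu,\nu})$. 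The resulting map $\bar A$ is well defined because the underlying $V^{\mu,\nu}_{min}$-structure is the same on the source and target.

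It remains to verify the key intertwining relation $\bar A^{\tau}\,\bar{\tilde J}_{\xi}\,\bar A = \bar J_{\bar C^{\tau}(\xi)}$ for every $\xi \in \mathbb R^{r+\mu,s+\nu}$. For $\xi = \zeta_j$ this holds by the very construction of the blocks of $\bar A$. For $\xi = z_i$ I would use the explicit form of $\bar J_{z_i}$ given in the proof of Proposition~\ref{extension of automorphism}: it acts as $\pm J_{z_i}\otimes \Id$ on each summand of~\eqref{tensor product rep to direct sum}, with the sign determined by whether one is on a $J_{\zeta_j}(E_{\mu,\nu})$ factor or a $J_{\zeta_1}J_{\zeta_j}(E_{\mu,\nu})$ factor; since both sides pick up the same sign on each summand, the identity $\bar A^{\tau}\,\bar{\tilde J}_{z_i}\,\bar A = \bar J_{C^{\tau}(z_i)} = \bar J_{\bar C^{\tau}(z_i)}$ reduces on each block to the original relation $A^{\tau}\tilde J_{z_i}A = J_{C^{\tau}(z_i)}$.

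The main obstacle is purely bookkeeping: one must keep track of the signs coming from $J_{\Omega^{\mu,\nu}}$ in the definition of $\bar J_{z_i}$ and check that they cancel consistently on every component of the $16$-fold decomposition. Once that verification is made, $\bar A \oplus \bar C$ is a Lie algebra isomorphism between $\n_{r+\mu,s+\nu}(V^{r+\mu,s+\nu}_{min})$ and $\n_{r+\mu,s+\nu}(\tilde V^{r+\mu,s+\nu}_{min})$, completing the proof.
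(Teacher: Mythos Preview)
Your proposal is correct and follows essentially the same route as the paper: the paper's proof simply sets $\bar A = A\otimes \Id$ and $\bar C = C\oplus \Id$ and invokes the procedure of Proposition~\ref{extension of automorphism}, which is exactly what your block-by-block construction and sign verification amount to. Your write-up is more detailed than the paper's two-line argument, but the underlying idea is identical.
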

\begin{proof}
Let $\Phi=A\oplus C\colon \n_{r,s}(V^{r,s}_{min})\to\n_{r,s}(\tilde V^{r,s}_{min})$ be a Lie algebra isomorphism, then $\Phi$ that can be extended to $\bar \Phi=\bar A\oplus \bar C\colon \n_{r+\mu,s+\nu}(V^{r+\mu,s+\nu}_{min})\to\n_{r+\mu,s+\nu}(\tilde V^{r+\mu,s+\nu}_{min})$ by the same procedure as in Lemma~\ref{extension of automorphism}. Namely, we set $\bar A=A\otimes \Id$ and $\bar C=C\oplus \Id$.
\end{proof}

Let now $U^{r,s}$ and $\tilde U^{r,s}$ be two admissible modules of equal dimensions for the Clifford algebra $\Cl_{r,s}$. Then $U^{r,s}=\oplus_k(V^{r,s}_{min})_k$ and $\tilde U^{r,s}=\oplus_k(\tilde V^{r,s}_{min})_k$. Then admissible modules $U^{r+\mu,s+\nu}$ and $\tilde U^{r+\mu,s+\nu}$ can be identified with 
$$
U^{r+\mu,s+\nu}\sim U^{r,s}\otimes V^{\mu,\nu}_{min}=\oplus_k \big((V^{r,s}_{min})_k\otimes V^{\mu,\nu}_{\min}\big)\sim \oplus_k(V^{r+\mu,s+\nu}_{min})_k,
$$
and 
$$
\tilde U^{r+\mu,s+\nu}\sim \tilde U^{r,s}\otimes V^{\mu,\nu}_{min}=\oplus_k \big((\tilde V^{r,s}_{min})_k\otimes V^{\mu,\nu}_{\min}\big)\sim \oplus_k(\tilde V^{r+\mu,s+\nu}_{min})_k.
$$
Now applying Lemma~\ref{lem:extisom} we obtain the following result.

\begin{theorem}\label{th:periodic}
If the Lie algebras $\n_{r,s}(U^{r,s})$ and  $\n_{r,s}(\tilde U^{r,s})$ are isomorphic, then the Lie algebras $\n_{r+\mu,s+\nu}(U^{r+\mu,s+\nu})$ and $\n_{r+\mu,s+\nu}(\tilde U^{r+\mu,s+\nu})$ are also isomorphic for $(\mu,\nu)\in \{(8,0),(0,8),(4,4)\}$.
\end{theorem}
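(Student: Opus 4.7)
My plan is to prove Theorem~\ref{th:periodic} by extending the given isomorphism $\Phi=A\oplus C\colon\n_{r,s}(U^{r,s})\to\n_{r,s}(\tilde U^{r,s})$ to the level $(r+\mu,s+\nu)$ via the tensor construction. First I would identify $U^{r+\mu,s+\nu}$ with $U^{r,s}\otimes V^{\mu,\nu}_{min}$ and $\tilde U^{r+\mu,s+\nu}$ with $\tilde U^{r,s}\otimes V^{\mu,\nu}_{min}$, both equipped with the product scalar product and with the Clifford actions $\hat J_{z_j}=J_{z_j}\otimes J_{\Omega^{\mu,\nu}}$, $\hat J_{\zeta_\alpha}=\Id\otimes J_{\zeta_\alpha}$. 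This identification follows from Proposition~\ref{periodicity by tensor product} extended by direct-summing over the minimal summands of $U^{r,s}$ and $\tilde U^{r,s}$, as indicated in the paragraph preceding the theorem.

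Next I would construct the extended isomorphism $\bar\Phi=\bar A\oplus\bar C$ with $\bar C=C\oplus\Id_{\mathbb R^{\mu,\nu}}$, mimicking the recipe already used in Lemma~\ref{lem:extisom} and Proposition~\ref{extension of automorphism}. Rather than naively setting $\bar A=A\otimes\Id$ globally (which would implicitly require $A$ to be an isometry), I would decompose both modules through the $16$-fold splitting~\eqref{decomposition by Jzeta} induced by the mutually commuting involutions $T_1,\ldots,T_4$ from Example~\ref{ex:inv44} and define $\bar A$ block-by-block: on the $V_0$-block $U^{r,s}\otimes E_{\mu,\nu}$ put $\bar A=A\otimes\Id_{E_{\mu,\nu}}$, and on each of the remaining fifteen blocks $U^{r,s}\otimes J_{\zeta_\alpha}(E_{\mu,\nu})$ and $U^{r,s}\otimes J_{\zeta_1}J_{\zeta_j}(E_{\mu,\nu})$ define $\bar A$ as the unique linear map making the commutative diagram analogous to the one in Example~\ref{ex:2} hold.

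Finally I would verify the intertwining relation $\bar A^{\tau}\,\tilde{\hat J}_w\,\bar A=\hat J_{\bar C^{\tau}(w)}$ for all $w\in\mathbb R^{r+\mu,s+\nu}$. For $w=\zeta_\alpha\in\mathbb R^{\mu,\nu}$ the identity holds by the block-wise definition of $\bar A$. For $w=z_j\in\mathbb R^{r,s}$ it follows from the original isomorphism identity $A^{\tau}\tilde J_{z_j}A=J_{C^{\tau}(z_j)}$ together with the facts that $J_{z_j}$ commutes with every $T_i$ (so $\hat J_{z_j}$ preserves the block decomposition) and that $J_{\Omega^{\mu,\nu}}$ acts as $\pm\Id$ on each block. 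The main obstacle will be the careful bookkeeping of the signs produced by $J_{\Omega^{\mu,\nu}}$ on the non-$V_0$ blocks and of the twists introduced by $(\bar A|_{V_0\text{-block}}^{-1})^{\tau}$ in the formulas on the other fifteen blocks; however, this is precisely the computation already carried out in the minimal case in the proof of Proposition~\ref{extension of automorphism}, and the presence of the additional tensor factor (namely $U^{r,s}$ in place of $V^{r,s}_{min}$) does not alter the structure of the argument.
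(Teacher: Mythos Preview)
Your proposal is correct and takes essentially the same approach as the paper: identify $U^{r+\mu,s+\nu}\cong U^{r,s}\otimes V^{\mu,\nu}_{min}$ (and likewise for $\tilde U$), then extend $A\oplus C$ to $\bar A\oplus(C\oplus\Id)$ via the block-wise recipe of Proposition~\ref{extension of automorphism}, exactly as in Lemma~\ref{lem:extisom}. Your caution about not taking $\bar A=A\otimes\Id$ globally is well-placed and in fact more careful than the paper's own one-line proof of Lemma~\ref{lem:extisom}, which writes ``$\bar A=A\otimes\Id$'' but really intends the block construction you describe.
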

 

\subsection{Proof of Theorem~\ref{main theorem 1}}


In order to prove the classification theorems for the pseudo $H$-type Lie algebras $\n_{r,s}(U)$,  one should be careful about the scalar product 
on each minimal admissible component of the decompositions~\eqref{eq:rsneq3} and~\eqref{eq:rseq3} of the admissible module $U$. Let us assume that $U=\oplus_i V_i$, where $V_i$ are minimal admissible modules. If we find linear maps $A_{ij}\colon V_i\to V_j$ for all $i$ and $j$ such that $A_{ij}\oplus C\colon\n_{r,s}(V_i)\to\n_{r,s}(V_j)$ are the Lie algebra isomorphisms with $CC^{\tau}=\Id_{\mathbb R^{r,s}}$, then the Lie algebra $\n_{r,s}(U)$ is unique. Even though a different choice of a scalar product on the vector space $U$ gives different minimal admissible modules $V_i$ in the decomposition $U=\oplus_i V_i$, the resulting Lie algebras $\n_{r,s}(U)$ can be isomorphic if there is a map $C\colon\mathbb R^{r,s}\to\mathbb R^{r,s}$ that is the same for all $A_{ij}\colon V_i\to V_j$. For the simplicity we choose $C$ to be identity on $\mathbb R^{r,s}$. 
The construction of maps $A_{ij}\colon V_i\to V_j$ depends on the signature of the restriction of the admissible scalar product on the common $1$-eigenspace of each minimal admissible module $V_i$ from the direct sum $U=\oplus_i V_i$. The proof of Theorem~\ref{main theorem 1} is given in three lemmas according to whether the common $1$-eigenspace is sign definite or neutral space and depends also on the type of the decomposition $U=\oplus_i V_i$ in~\eqref{eq:rsneq3} and~\eqref{eq:rseq3}. The first lemma concerns with the cases when there are only two types of minimal admissible modules $V_{min}^{r,s;+}$ and $V_{min}^{r,s;-}$ and 
the restrictions of the scalar product onto the common 1-eigenspace is sign definite and have different sign. 
 
\begin{lemma}\label{lem:isom411}
The Lie algebras $\n_{r,s}(V_{min}^{r,s;+})$ and
$\n_{r,s}(V_{min}^{r,s;-})$ are 
isomorphic for $r\equiv 0,1,2\,(\text{\em mod}~4)$ $s\equiv
0\,(\text{\em mod}~4)$ 
under the map $A\oplus\Id_{\mathbb R^{r,s}}$.
\end{lemma}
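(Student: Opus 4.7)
The plan is to invoke Lemma~\ref{lem:aut_iso}, which converts the desired isomorphism $A\oplus \Id\colon\n_{r,s}(V_{min}^{r,s;+})\to\n_{r,s}(V_{min}^{r,s;-})$ into the equivalent task of producing a restricted automorphism of the form $A'\oplus (-\Id_{\mathbb R^{r,s}})\in \Aut^{0}(\n_{r,s}(V_{min}^{r,s;+}))$. Since the hypothesis $s\equiv 0\,(\text{mod}~4)$ makes $-\Id_{\mathbb R^{r,s}}$ have determinant $(-1)^{r}$ on the centre, the parity of $r$ controls which previously established tool applies, and I would split the argument accordingly.

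When $r\equiv 0,2\,(\text{mod}~4)$, so $r$ is even, I would invoke Corollary~\ref{-Id included} directly: it supplies the desired $A'\oplus (-\Id)$ because $-\Id$ lies in $\SO(r)\times \Orth(s)$, which in turn lies in the image of $\Aut^{0}(\n_{r,s})\to \Orth(r,s)$ by the Clifford-theoretic construction of Proposition~\ref{automorohisms from Clifford algebra} together with Proposition~\ref{prop:coverings}. Combined with the sign-flip isomorphism of Lemma~\ref{isomorphism between opposit sign scalar product} this closes the even subcase with no additional computation.

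When $r\equiv 1\,(\text{mod}~4)$, so $r$ is odd and $-\Id\notin \SO(r)$, Corollary~\ref{-Id included} does not apply. I would use Theorem~\ref{th:periodic}, together with the compatibility of the $\pm$ conventions under Proposition~\ref{periodicity by tensor product}, to reduce to the three basic cases $(r,s)\in\{(1,0),(5,0),(1,4)\}$ permitted by~\eqref{basic cases}. In each of these cases the construction of $A'$ is explicit and uses the orthonormal basis of Section~\ref{sec:lattice}: picking a unit vector $v\in E_{r,s}$ the basis of $V_{min}^{r,s}$ is $\{J_{z_{i_1}}\cdots J_{z_{i_k}}v\}$, and I would define $A'$ as the reflection sending this basis vector to itself multiplied by a sign depending on whether the string contains $J_{z_1}$ (or, equivalently, by an appropriate parity of $k$ corrected against the volume-form relation $J_{\Omega^{r,s}}=\pm\Id$ that holds because $r+s$ is odd). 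In the smallest basic case $(1,0)$ this reduces to $A'(v)=v$, $A'(J_{z_1}v)=-J_{z_1}v$, which is visibly an isometric reflection with $A'J_{z_1}=-J_{z_1}A'$, so $(A')^{\tau}J_{z_1}A'=-J_{z_1}=J_{-z_1}$ and the criterion of Proposition~\ref{Lie algebra isomorphism 2} yields $A'\oplus (-\Id)\in \Aut^{0}$. The analogous constructions for $(5,0)$ and $(1,4)$ proceed from the systems $PI_{r,s}$ and $CO_{r,s}$ recorded in Section~\ref{app}.

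The main obstacle is the isometry condition $(A')^{\tau}A'=\Id$ in the larger basic cases, and in particular in the neutral case $(1,4)$: one must check that the sign assignments dictated by the parity construction are compatible with the indefinite admissible scalar product on each of the sixteen simultaneous eigenspaces of $PI_{1,4}$, and that they respect the volume-form relation preventing the naive parity grading from being well defined when $r+s$ is odd. This bookkeeping reduces to the sign data encoded in Table~\ref{t:dim} and the corresponding auxiliary tables of Section~\ref{app}; once it is verified for the basic cases, Theorem~\ref{th:periodic} transports the conclusion to every $(r,s)$ with $r\equiv 1\,(\text{mod}~4)$ and $s\equiv 0\,(\text{mod}~4)$, and together with the even-$r$ argument this completes the proof.
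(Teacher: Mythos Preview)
Your treatment of the even case $r\equiv 0,2\,(\text{mod}~4)$ matches the paper exactly. For $r\equiv 1\,(\text{mod}~4)$ your reduction to the basic cases $(1,0),(5,0),(1,4)$ is also the paper's first move, and your explicit $(1,0)$ computation is fine. The gap is in your proposed construction of $A'$ for $(5,0)$ and $(1,4)$.

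A diagonal sign-flip $A'$ in the basis $\{J_{z_{i_1}}\cdots J_{z_{i_k}}v\}$ cannot realise $C=-\Id$. If $(A')^{\tau}A'=\Id$, the condition $(A')^{\tau}J_{z_i}A'=J_{-z_i}$ forces $A'J_{z_i}=-J_{z_i}A'$ for every $i$, hence $A'$ anticommutes with the three-term involution $P_2=J_{z_1}J_{z_2}J_{z_3}\in PI_{5,0}=PI_{1,4}$. Thus $A'$ must interchange the $\pm 1$-eigenspaces of $P_2$ and in particular cannot fix the common $1$-eigenspace $E_{r,s}$, so it cannot be diagonal in that basis. Equivalently, neither the parity $(-1)^{k}$ nor the indicator ``$z_1$ appears'' is well defined on basis vectors, because the relation $J_{z_1}J_{z_2}J_{z_3}v=v$ identifies words of opposite parity and with opposite $z_1$-content. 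Your parenthetical remedy, invoking $J_{\Omega^{r,s}}=\pm\Id$, does not apply here: for $(5,0)$ and $(1,4)$ one has $r-s\not\equiv 3\,(\text{mod}~4)$, so $\Omega^{r,s}$ is central but squares to $-\Id$ and does not act as $\pm\Id$ on the irreducible module.

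The paper bypasses this obstruction by a descent trick instead of an explicit basis construction. Because $\dim V^{6,0}_{min}=\dim V^{5,0}_{min}=8$ (and likewise $\dim V^{2,4}_{min}=\dim V^{1,4}_{min}$), one may regard $V^{5,0;+}_{min}$ as $V^{6,0}_{min}$ with the $\Cl_{6,0}$-action restricted along $\mathbb R^{5,0}\subset\mathbb R^{6,0}$. Corollary~\ref{-Id included} supplies $A\oplus(-\Id_{\mathbb R^{6,0}})\in\Aut^{0}(\n_{6,0})$, and since $A^{\tau}J_zA=J_{-z}$ for all $z\in\mathbb R^{6,0}$ it holds a fortiori for $z\in\mathbb R^{5,0}$; the same $A$ then gives $A\oplus(-\Id_{\mathbb R^{5,0}})\in\Aut^{0}(\n_{5,0})$, and Lemma~\ref{lem:aut_iso} finishes. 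The case $(1,4)$ descends from $(2,4)$ identically. This produces an $A$ that is genuinely non-diagonal in your basis (it is built from the extra generator $J_{z_{r+1}}$), which is exactly what the eigenspace-swapping argument above says is unavoidable.
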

\begin{proof}
We consider the case $r\equiv 0,2\,(\text{mod}~4)$. 
In this case the existence of an
isomorphism $A\oplus\Id_{\mathbb
  R^{r,s}}\colon\n_{r,s}(V_{min}^{r,s;+})\to
\n_{r,s}(V_{min}^{r,s;-})$ is equivalent to the existence of the
automorphism $A\oplus-\Id_{\mathbb R^{r,s}}\colon
\n_{r,s}(V_{min}^{r,s;+})\to \n_{r,s}(V_{min}^{r,s;+})$ by
Lemma~\ref{lem:aut_iso}. The necessary automorphism exists by
Corollary~\ref{-Id included}. 

Let $r\equiv 1\,(\text{mod}~4)$. 
We need only to consider the cases $(1,0)$ $(5,0)$ and $(1,4)$ due to periodicity. The case $\n_{1,0}(V_{min}^{1,0;\pm})$ is trivial by the uniqueness of three dimensional Heisenberg algebra.
 
Let $(r,s)=(5,0)$. We can assume that the minimal admissible module $V^{6,0}_{min}$ carries positive definite scalar product due to the first part of Lemma~\ref{lem:isom411}. Moreover, since $\dim(V^{6,0}_{min})=\dim(V^{5,0;+}_{min})=8$ the minimal admissible module $V_{min}^{5,0;+}$ can be thought as the
restriction of 
the minimal admissible module
$V_{min}^{6,0}\cong\mathbb{R}^{8,0}$ 
by restricting the action of $J_z\colon \mathbb R^{6,0}\to\End(V_{min}^{6,0})$ through the inclusion map
$\mathbb{R}^{5,0}\subset\mathbb{R}^{6,0}$ as well as the restriction of the scalar product $\la\cdot\,,\cdot\ra_{V^{6,0}_{min}}$ onto $V^{5,0;+}_{min}$. Let $\pi\colon\mathbb{R}^{6,0}\to \mathbb{R}^{5,0}$ be the orthogonal projection. Then  
\begin{equation}\label{eq:6050}
\Id_{\mathbb R^{8,0}}\oplus\,\pi\colon\n_{6,0}(V_{min}^{6,0})\to \n_{5,0}(V_{min}^{6,0})
\end{equation}
is a Lie algebra homomorphism. Let $A\oplus -\Id_{\mathbb R^{6,0}} \in \Aut^{0}(\n_{6,0}(V^{6,0}_{min}))$, which existence is guaranteed by Corollary~\ref{-Id included}. The property
$$
A^{\tau} J_{z} A=J_{-z}\quad\text{for any}\quad z\in \mathbb{R}^{6,0},\quad
~\text{with}
~\la Ax, y\ra_{V_{min}^{6,0}}=\la x, A^{\tau}y\ra_{V_{min}^{6,0}}
$$
and the homomorphism~\eqref{eq:6050} allow to descend the automorphism $A\oplus -\Id\in \Aut^{0}(\n_{6,0}(V_{min}^{6,0}))$ to the 
automorphism of $\n_{5,0}(V_{min}^{5,0;+})$ with the same map $A\colon V_{min}^{5,0;+}\to V_{min}^{5,0;+}$. Now applying Lemma~\ref{lem:aut_iso}, we conclude that there is an isomorphism $A\oplus\Id\colon V_{min}^{5,0;+}\to V_{min}^{5,0;-}$ that finishes the proof.

The existence of an isomorphism $A\oplus\Id\colon V_{min}^{1,4;+}\to V_{min}^{1,4;-}$ can be deduced from the existence of an automorphism $A\oplus-\Id\colon V_{min}^{2,4;+}\to V_{min}^{2,4;+}$ as in the previous case.
\end{proof} 

The following two lemmas give the rest of the proof of
Theorem~~\ref{main theorem 1} and they are concerned with indices
$r\equiv 0,1,2\,(\text{mod}~4)$, $s\equiv 1,2,3\,(\text{mod}~4)$ for 
which the restriction of the admissible scalar product to the common 1-eigenspace is neutral. Lemma~\ref{uniqueness(0)-2} deals with the indices $(r,s)\notin\{(1,2),(1,6),(5,2)\}$, because in these cases any admissible module $U$ has decomposition~\eqref{eq:rsneq3}. If $(r,s)$ belongs to $\{(1,2),(1,6),(5,2)\}$ then an admissible module $U$ has decomposition~\eqref{eq:rseq3} and the results of Lemma~\ref{uniqueness(0)-2} are extended in Lemma~\ref{lem:121652}.

\begin{lemma}\label{uniqueness(0)-2}
Let $(V_{min}^{r,s;+}=(V,\la\cdot\,,\cdot\ra_{V_{min}^{r,s;+}})$ and $V_{min}^{r,s;-}=(V,\la\cdot\,,\cdot\ra_{V_{min}^{r,s;-}})$ be two minimal admissible modules of $\Cl_{r,s}$ with $\la\cdot\,,\cdot\ra_{V_{min}^{r,s;+}}=-\la\cdot\,,\cdot\ra_{V_{min}^{r,s;-}}$. If the restrictions of both scalar products on the common 1-eigenspace $E_{r,s}$ of involutions from $PI_{r,s}$ 
are neutral, then the Lie algebras $\n_{r,s}(V_{min}^{r,s;+})$ and $\n_{r,s}(V_{min}^{r,s;-})$ are isomorphic under the isomorphism $A\oplus \Id_{\mathbb R^{r,s}}$.
\end{lemma}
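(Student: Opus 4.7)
The approach is to apply Lemma~\ref{lem:aut_iso} and reduce the existence of the isomorphism $A\oplus\Id_{\mathbb{R}^{r,s}}\colon\n_{r,s}(V_{min}^{r,s;+})\to\n_{r,s}(V_{min}^{r,s;-})$ to the existence of a restricted automorphism of the form $A\oplus-\Id_{\mathbb{R}^{r,s}}\in\Aut^{0}(\n_{r,s}(V_{min}^{r,s;+}))$. By relation~\eqref{isomorphism relation} with $C=-\Id$, this amounts to producing a linear map $A\colon V_{min}^{r,s}\to V_{min}^{r,s}$ satisfying $A^{\tau}J_{z}A=-J_{z}$ for every $z\in\mathbb{R}^{r,s}$, where $A^{\tau}$ is the adjoint with respect to $\la\cdot\,,\cdot\ra_{V_{min}^{r,s;+}}$.

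First I will exploit the hypothesis that $E_{r,s}$ is neutral to write $E_{r,s}=E^{+}\oplus E^{-}$ as an orthogonal sum of sign-definite subspaces of equal dimension $m=\tfrac{1}{2}\dim E_{r,s}$, and apply Proposition~\ref{orthogonal} inside each summand to obtain orthonormal bases $\{v_{\alpha}\}\subset E^{+}$ and $\{v'_{\alpha}\}\subset E^{-}$ with $\la v_{\alpha},v_{\beta}\ra_{V}=\delta_{\alpha\beta}$ and $\la v'_{\alpha},v'_{\beta}\ra_{V}=-\delta_{\alpha\beta}$. The construction of Section~\ref{sec:lattice} then lifts each of these generators to an orthogonal family $\{J_{I}v_{\alpha}\}$ and $\{J_{I}v'_{\alpha}\}$, where $J_{I}=J_{z_{i_{1}}}\cdots J_{z_{i_{k}}}$ ranges over the admissible multi-indices, and together these form a basis of $V_{min}^{r,s}$.

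Next I will define $A$ as the involutive swap $A(J_{I}v_{\alpha})=J_{I}v'_{\alpha}$ and $A(J_{I}v'_{\alpha})=J_{I}v_{\alpha}$, extended by linearity. By construction $A$ commutes with every $J_{z}$, so verifying the automorphism condition reduces to showing $A^{\tau}A=-\Id$. This in turn will follow from the identity $\la J_{I}w,J_{I}w\ra_{V}=\epsilon_{I}\la w,w\ra_{V}$ with $\epsilon_{I}=\prod_{k\in I}\la z_{k},z_{k}\ra_{r,s}\in\{\pm1\}$, obtained by iterating Proposition~\ref{properties of admissible module}(3), together with the orthogonality of distinct common eigenspaces of the symmetric involutions in $PI_{r,s}$. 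A short adjoint computation using these pairings gives $A^{\tau}(J_{I}v_{\alpha})=-J_{I}v'_{\alpha}$ and $A^{\tau}(J_{I}v'_{\alpha})=-J_{I}v_{\alpha}$, hence $A^{\tau}A=-\Id$ and $A^{\tau}J_{z}A=J_{z}A^{\tau}A=-J_{z}$. Lemma~\ref{lem:aut_iso} then converts the automorphism $A\oplus-\Id$ into the desired isomorphism $A\oplus\Id$.

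The main delicate point I anticipate is the sign bookkeeping in the identity $A^{\tau}A=-\Id$: one must verify that the sign $\epsilon_{I}$ coming from iterating the admissibility relation cancels uniformly against the opposite signs of $\la v_{\alpha},v_{\alpha}\ra_{V}$ versus $\la v'_{\alpha},v'_{\alpha}\ra_{V}$, so that the resulting sign is $-1$ irrespective of the multi-index $I$. Everything else---the commutation of $A$ with each $J_{z}$, the orthogonality of the basis produced by Section~\ref{sec:lattice}, and the final assembly via Lemma~\ref{lem:aut_iso}---is formal once this sign identity is established.
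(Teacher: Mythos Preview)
Your reduction via Lemma~\ref{lem:aut_iso} is fine, but the claimed property ``by construction $A$ commutes with every $J_z$'' is not justified, and in fact fails in general. Once you fix an orthogonal splitting $E_{r,s}=E^{+}\oplus E^{-}$ and an arbitrary bijection $v_\alpha\leftrightarrow v'_\alpha$, the resulting involutive swap on $E_{r,s}$ need not commute with the Clifford products $J_{I'}^{-1}J_{z_k}J_I$ that preserve $E_{r,s}$; equivalently, $A$ need not be a $\Cl_{r,s}$-module map. A minimal counterexample is $(r,s)=(0,2)$: here $PI_{0,2}=\emptyset$, so $E_{0,2}=V_{min}^{0,2}$ is an irreducible $\Cl_{0,2}\cong\mathbb{H}$-module of real dimension $4$. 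Any linear map commuting with all $J_z$ is multiplication by a quaternion, hence the only involutions of this type are $\pm\Id$, neither of which exchanges $E^{+}$ and $E^{-}$. Thus no involutive swap of the kind you describe can commute with every $J_z$, and the chain $A^{\tau}J_zA=J_zA^{\tau}A=-J_z$ collapses. (One can check directly that, with the natural basis $\{v,\,J_{z_1}v,\,J_{z_2}v,\,J_{z_1}J_{z_2}v\}$, any pairing of the two positive with the two negative basis vectors yields an $A$ that commutes with one $J_{z_i}$ and anticommutes with the other, so $A^{\tau}J_zA$ is $-J_z$ for one generator and $+J_z$ for the other.)

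The paper avoids this obstruction by not insisting on an involution. It picks a single $v\in E_{r,s}$ with $\la v,v\ra_{V_{min}^{r,s;+}}=1$ and a single $u\in E_{r,s}$ with $\la u,u\ra_{V_{min}^{r,s;-}}=1$ (both exist precisely because $E_{r,s}$ is neutral), builds the cyclic orthonormal bases $x_i=J_{I_i}v$ and $y_i=J_{I_i}u$ of Section~\ref{sec:lattice}, and sets $A\colon x_i\mapsto y_i$. This $A$ \emph{is} module-equivariant---because the relations $J_{z_k}J_{I_i}w=\pm J_{I_j}w$ use only that $w\in E_{r,s}$, the same sign and index occur for $v$ and for $u$---and one checks directly that the structure constants $\la z_k,[x_i,x_j]\ra_{r,s}$ and $\la z_k,[y_i,y_j]\ra_{r,s}$ coincide. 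If you want to keep your automorphism route, the fix is to define $A$ on $E_{r,s}$ by a single assignment $v\mapsto u$ and extend module-equivariantly, rather than by an arbitrary basis swap; then $A$ commutes with every $J_z$ and satisfies $A^{\tau}A=-\Id$ (computed with respect to $\la\cdot\,,\cdot\ra_{V_{min}^{r,s;+}}$ on both sides), but it will generally not be an involution.
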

\begin{proof}
Notice that the system of involutions $PI_{r,s}$ does not depend on the scalar product and therefore the common 1-eigenspace $E_{r,s}$ is the same for both modules. The restrictions of the scalar products on $E_{r,s}$ are neutral by hypothesis. We find $v,u\in E_{r,s}$ such that $\la v,v\ra_{V_{min}^{r,s;+}}=\la u,u\ra_{V_{min}^{r,s;-}}=1$. We find the orthonormal bases
$$
x_0=v_1,\ x_1=J_{z_1}v,\ \ldots, x_{N-1}=\prod_i J_{z_i}v,\quad N=\dim(V),
$$
$$
y_0=u,\ y_1=J_{z_1}u,\ \ldots, y_{N-1}=\prod_i J_{z_i}u,\quad N=\dim(\tilde V).
$$
Then the map $A\oplus \Id_{\mathbb R^{r,s}}$ is the isomorphism of the Lie algebras $\n_{r,s}(V_{min}^{r,s;+})$ and $\n_{r,s}(V_{min}^{r,s;-})$, where we set
$
A\colon x_i\mapsto y_i$ and then extended it by linearity.
Indeed, let $z_k\in\mathbb R^{r,s}$ be arbitrary and $J_{z_k}\colon V\to V$ be the Clifford action. Then
\begin{eqnarray*}
\la z_k,[x_i,x_j]\ra_{r,s}& = &
\la J_{z_k}\prod_i J_{z_{k_i}}v,\prod_j J_{z_{k_j}}v\ra_{V_{min}^{r,s;+}}=
\la J_{z_k}\prod_i J_{z_{k_i}}u,\prod_j J_{z_{k_j}}u\ra_{V_{min}^{r,s;-}}
\\
&&
\la z_k,[y_i,y_j]\ra_{r,s}
\end{eqnarray*}
since the calculations depend only on number of permutations in the products.
\end{proof}

In the cases $(r,s)\in\{(1,2),(1,6),(5,2)\}$ there are two irreducible modules $V_{irr;\pm}^{r,s}$, but they are not admissible. The minimal admissible modules are $V_{min;+}^{r,s}=V_{irr;+}^{r,s}\oplus V_{irr;+}^{r,s}$ and $V_{min;-}^{r,s}=V_{irr;-}^{r,s}\oplus V_{irr;-}^{r,s}$, see Corollary~\ref{r-s = 3 mod 4 (2)}, item (3-2-1). Thus any admissible module $U$ is decomposed on the direct sum of the type~\eqref{eq:rseq3}. 

\begin{lemma}\label{lem:121652}
The Lie algebras 
$$
\n_{r,s}(V_{min;+}^{r,s;-})\stackrel{\Phi_1}\cong\n_{r,s}(V_{min;+}^{r,s;+})\stackrel{\Phi_2}\cong\n_{r,s}(V_{min;-}^{r,s;-})\stackrel{\Phi_3}\cong\n_{r,s}(V_{min;-}^{r,s;+})
$$
for $(r,s)\in\{(1,2),(1,6),(5,2)\}$ are isomorphic under the maps $\Phi_k=A_k\oplus C$, $k=1,2,3$ with $C=\Id_{\mathbb R^{r,s}}$.
\end{lemma}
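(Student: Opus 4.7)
The strategy is to construct each of the three isomorphisms $\Phi_1,\Phi_2,\Phi_3$ separately, exploiting two features shared by the three exceptional signatures $(1,2),(1,6),(5,2)$: first, $r+s$ is odd (equal to $3,7,7$ respectively), and second, the common $1$-eigenspace $E_{r,s}$ carries a neutral restriction of the admissible scalar product, as recorded by the label $N$ in Table~\ref{t:dim}.

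For $\Phi_1$ and $\Phi_3$ the domain and codomain share the same Clifford module structure and differ only in the sign of the admissible scalar product. I adapt the orthonormal basis argument of Lemma~\ref{uniqueness(0)-2}. The neutrality of $E_{r,s}$ lets me pick $v\in E_{r,s}$ with $\la v,v\ra_{V_{min;\pm}^{r,s;+}}=1$ and $u\in E_{r,s}$ with $\la u,u\ra_{V_{min;\pm}^{r,s;-}}=1$, and then build bases $\{x_i=M_iv\}$, $\{y_i=M_iu\}$ using the same Clifford monomials $M_i$ as in Section~\ref{sec:lattice}. Setting $A_k\colon y_i\mapsto x_i$ for $k=1,3$, the rewriting $J_{z_k}M_i=\sigma_{ki}M_{i'}$ takes place entirely in $\Cl_{r,s}$ and is independent of the base vector, so $J_{z_k}x_i=\sigma_{ki}x_{i'}$ and $J_{z_k}y_i=\sigma_{ki}y_{i'}$ with the same signs. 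Because the isometric/anti-isometric character of $M_{i'}$ also gives the same sign $\epsilon_{i'}$ for both $\la x_{i'},x_{i'}\ra_{V_{min;\pm}^{r,s;+}}$ and $\la y_{i'},y_{i'}\ra_{V_{min;\pm}^{r,s;-}}$, the bracket formula~\eqref{def of skew-symmetric bi-linear form} produces equal structure constants on both sides, making $A_k\oplus\Id_{\mathbb R^{r,s}}$ a Lie algebra isomorphism.

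For $\Phi_2$ I would use a direct identity-map construction. Let $W$ denote the underlying vector space of $V_{min;+}^{r,s;+}$, with Clifford action $J$ and scalar product $\la\cdot\,,\cdot\ra$, and endow $W$ with a second module structure by setting $\widetilde J_z:=-J_z$ together with scalar product $-\la\cdot\,,\cdot\ra$. Since $r+s$ is odd,
\[
\widetilde J_{\Omega^{r,s}}=(-1)^{r+s}J_{\Omega^{r,s}}=-\Id,
\]
so $(W,\widetilde J)$ realises a Clifford module of the opposite irreducible type $V_{min;-}^{r,s}$, and the simultaneous sign flip of the scalar product preserves admissibility~\eqref{admissibility condition 1}. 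Using $\widetilde J=-J$ and that the target scalar product is $-\la\cdot\,,\cdot\ra$, the two sign changes cancel in the bracket formula:
\[
\la z,[x,y]^{\text{target}}\ra_{r,s}=-\la\widetilde J_zx,y\ra=-\la -J_zx,y\ra=\la J_zx,y\ra=\la z,[x,y]^{\text{source}}\ra_{r,s},
\]
so $\Id_W\oplus\Id_{\mathbb R^{r,s}}$ is a Lie algebra isomorphism from $\n_{r,s}(V_{min;+}^{r,s;+})$ onto $\n_{r,s}(V_{min;-}^{r,s;\ast})$ for some $\ast\in\{+,-\}$, and composing with $\Phi_3$ if $\ast=+$ yields $\Phi_2$ in the required form $A_2\oplus\Id_{\mathbb R^{r,s}}$.

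The main obstacle is realising the constraint $C=\Id_{\mathbb R^{r,s}}$: a single sign flip, either of the Clifford action alone (to switch irreducible type) or of the scalar product alone (as in Lemma~\ref{isomorphism between opposit sign scalar product}), would produce $C=-\Id$. It is precisely the combination of odd parity of $r+s$ and case~(3-2-1) of Corollary~\ref{r-s = 3 mod 4 (2)}, in which neither irreducible component alone is admissible and hence a scalar product sign can be freely chosen, that allows the double cancellation of signs on the centre. This also explains why the remaining signatures with $s$ even in case~(3-2-1) require a separate argument and do not fall under the scope of Lemma~\ref{lem:121652}.
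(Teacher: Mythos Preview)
Your treatment of $\Phi_1$ and $\Phi_3$ via Lemma~\ref{uniqueness(0)-2} coincides with the paper's. For $\Phi_2$ your route is genuinely different. The paper proceeds by explicit case-by-case basis construction: in $V_{min;+}^{r,s;+}$ it takes $v$ with $\la v,v\ra=1$ and the basis $x_1=v$, $x_i=J_{z_{i-1}}v$, while in $V_{min;-}^{r,s;-}$ it takes $u$ with $\la u,u\ra=-1$ and the basis $y_1=u$, $y_i=-\tilde J_{z_{i-1}}u$ (with further products of two and three generators for $(1,6)$ and $(5,2)$), and then verifies directly that $x_i\mapsto y_i$ defines an isomorphism with $C=\Id$. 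Your double sign flip $(J,\la\cdot,\cdot\ra)\mapsto(-J,-\la\cdot,\cdot\ra)$ packages exactly these two sign insertions into one conceptual step: unwinding your $\Id_W$ in the generated basis reproduces the paper's sign pattern. Your argument is uniform across the three signatures and makes transparent the role of the odd parity of $r+s$, whereas the paper's explicit bases are tied to the concrete structure-constant and lattice computations elsewhere in the paper.

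One point deserves care. You conclude that $(W,-J,-\la\cdot,\cdot\ra)$ ``is'' $V_{min;-}^{r,s;\ast}$ for some $\ast\in\{+,-\}$. This is cleanest if you simply \emph{declare} the pair $V_{min;-}^{r,s;\pm}$ to be realised as $(W,-J,\pm\la\cdot,\cdot\ra)$, a legitimate choice of representatives consistent with the paper's convention that the two differ only by the sign of the scalar product; then $\ast=-$ and no composition with $\Phi_3$ is needed. If instead the target module is given in some a priori different realisation, you must invoke the observation that the proof of Lemma~\ref{uniqueness(0)-2} applies verbatim to any two admissible scalar products on equivalent Clifford modules with neutral $E_{r,s}$, not merely to a $\pm$ pair, since the structure constants in the generated bases depend only on Clifford-algebra relations. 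Finally, your closing remark about ``the remaining signatures with $s$ even in case~(3-2-1)'' seems to intend case~(3-1) of Corollary~\ref{r-s = 3 mod 4 (2)}: the three signatures of the present lemma are precisely the basic cases of~(3-2-1).
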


\begin{proof}
The existence of the maps $\Phi_1$  and $\Phi_3$ follows from Lemma~\ref{uniqueness(0)-2}. We need only to construct $\Phi_2$.

{\sc Case $\n_{1,2}$}
Let $v\in V_{min;+}^{1,2;+}$ be such that $\la v,v\ra_{V_{min;+}^{1,2;+}}=1$ and $u\in V_{min;-}^{1,2;-}$ with
$\la u,u\ra_{V_{min;-}^{1,2;-}}=-1$. It is possible, since both scalar products are neutral on the module. Then the vectors
$$
\begin{array}{lllll}
&x_{1}=v,\ &x_{2}=J_{z_1}v,\ &x_{3}=J_{z_2}v,\ &x_{4}=J_{z_3}v,
\\
&y_{1}=u,\ &y_{2}=-J_{z_1}u,\ &y_{3}=-J_{z_2}u,\ &y_{4}=-J_{z_3}u,
\end{array}
$$
form the orthonormal bases for $V_{min;+}^{1,2;+}$ and $V_{min;-}^{1,2;-}$, respectively.
We define the correspondence:
$
A\colon x_{i}\mapsto y_{i}$  and $C\colon z_{i}\mapsto z_{i}$ and extend it by linearity
It is easy to check that $\Phi_2=A\oplus\Id_{\mathbb R^{1,2}}$
defines an isomorphism between the Lie algebras
$\n_{1,2}(V_{min;+}^{1,2;+})$ and $\n_{1,2}(V_{min;-}^{1,2;-})$. 

{\sc Case $\n_{1,6}$}.  
This case is similar to $\n_{1,2}$ and 
we construct an isomorphism
$\Phi=A\oplus\Id\colon
\n_{1,6}(V_{min;+}^{1,6;+})\to\n_{1,6}(V_{min;-}^{1,6;-})$. We have  
$J_{\Omega^{1,6}}\equiv \Id$ on $V_{min;+}^{1,6;+}$, that implies 
$P_{3}\equiv -\Id$, see Table~\ref{tab:block3}. It also shows that
$E_{1,6}=E_{1,6}(V_{min;+}^{1,6;+})=\{v\in V_{min;+}^{1,6}~|~P_{1}(v)=v,
P_{2}(v)=v\}$.
Then if necessary, we apply 
Proposition~\ref{orthogonal}, with  
the operators $\Lambda_{1}=J_{z_2}J_{z_4}J_{z_6}$ and $\Lambda_{2}=J_{z_2}J_{z_4}J_{z_7}$ and 
obtain the orthonormal basis of $V_{min;+}^{1,6;+}$ 
from a suitable vector $v\in E_{1,6}(V_{min;+}^{1,6;+})$ 
with $\la v,v\ra_{V_{min};+}^{1,6;+}=1$:  
\begin{equation}\label{eq:basis16p}
\begin{array}{llllll}
& x_1=v, \quad &x_i=J_{z_{i-1}}v, &\quad i=1,\ldots,8, &
\\
&x_j=J_{z_2}J_{z_{j-5}}v, &j=9,\ldots,14 &
x_{k}=J_{z_2}J_{z_4}J_{z_{k-9}}v, & k=15, 16.
\end{array}
\end{equation}
Let $u\in E_{1,6}(V_{min;-}^{1,6;-})\subset {V_{min;-}^{1,6;-}}$ with $\la u,u\ra_{V_{min;-}^{1,6;-}}=-1$. Then 
by the same way as for $V_{min;+}^{1,6;+}$ we obtain the
orthonormal basis of $V_{min;-}^{1,6;-}$:

\begin{equation}\label{eq:basis16m}
\begin{array}{llllll}
& y_1=u, \quad &y_i=-J_{z_{i-1}}u, &\quad i=1,\ldots,8, &
\\
&y_j=J_{z_2}J_{z_{j-5}}u, &j=9,\ldots,14 & y_{k}=-J_{z_2}J_{z_4}J_{z_{k-9}}u, & k=15,16.
\end{array}
\end{equation}
Then as previously, the correspondence 
$A\colon x_i\mapsto y_i,\ i=1,\ldots,16$, 
defines the Lie algebra isomorphism 
$\Phi=A\oplus\Id\colon
\n_{1,6}(V_{min;+}^{1,6;+})\to\n_{1,6}(V_{min;-}^{1,6;-})$, 
since the map $A$ satisfies relation~\eqref{isomorphism relation}.

{\sc Case $\n_{5,2}$.} In this case we can use bases~\eqref{eq:basis16p} and~\eqref{eq:basis16m}, since $PI_{5,2}=PI_{1,6}$. The table of commutators will differ by signs for $z_1,\ldots,z_7$.
\end{proof}


\subsection{Proof of Theorem~\ref{main theorem 2}}


Theorem~\ref{main theorem 2} is concerned with the indices $r\equiv
3\,(\text{mod}~4)$ 
and $s\equiv 0\,(\text{mod}~4)$ and is given in Lemmas~\ref{isomorphism V^{+}_{+}and V^{-}_{-}} and~\ref{lem:307034}. The cases with $s=0$ are classical and the result is known, for instance from~ \cite{BTV,CDKR}, however in order
to accomplish the whole classification of pseudo $H$-type Lie algebras
we must take into account that the Lie algebras $\n_{r,0}(U)$ can admit a negative definite admissible scalar product on $U$. Thus, we can obtain the opposite sign of the restriction of the admissible scalar product on the common 1-eigenspace even for classical cases. 

\begin{proposition}\label{isomorphism V^{+}_{+}and V^{-}_{-}} 
Let $r\equiv 3\,(\text{\em mod}~4)$ 
and $s\equiv 0\,(\text{\em mod}~4)$. 
Then an admissible module $U$ is decomposed into the direct sum of type~\eqref{eq:rseq3}. Moreover
\begin{itemize}
\item[$(1)$] {There is a Lie algebra isomorphism $\Phi\colon\n_{r,s}(V^{r,s;+}_{min;+})\to\n_{r,s}(V_{min;-}^{r,s;-})$ of the form
$\Phi=A\oplus \Id$. There is no isomorphism of the form $\Phi=A\oplus -\Id$ between these algebras.
Analogous results can be stated for the Lie algebras isomorphisms $\n_{r,s}(V^{r,s;-}_{min;+})\to\n_{r,s}(V^{r,s;+}_{min;-})$. 
}
\item[$(2)$]{There is a Lie algebra isomorphism
    $\Phi\colon\n_{r,s}(V^{r,s;+}_{min;+})\to\n_{r,s}(V^{r,s;-}_{min;+})$ 
of the form $\Phi=A\oplus C$ with $\det C<0$ and there is no isomorphism of the form $\Phi=A\oplus \Id$.
Analogous results hold for the Lie algebra isomorphisms
$\n_{r,s}(V^{r,s;+}_{min;-})\to\n_{r,s}(V^{r,s;-}_{min;-})$, $\n_{r,s}(V^{r,s;+}_{min;+})\to\n_{r,s}(V^{r,s;+}_{min;-})$, $\n_{r,s}(V^{r,s;-}_{min;+})\to\n_{r,s}(V^{r,s;-}_{min;-})$.
} 
\end{itemize}
\end{proposition}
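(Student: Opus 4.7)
By Theorem~\ref{th:periodic}, it suffices to treat the basic cases $r\in\{3,7\}$ and $s\in\{0,4\}$, for each of which $r+s$ is odd, so that the volume form $\Omega^{r,s}$ has odd degree in $\Cl_{r,s}$. In this parity regime, negating the Clifford action, $J^+\mapsto J^-:=-J^+$, reverses the sign of $J^+_\Omega$ and thus realizes the other irreducible $V^{r,s}_{min;-}$ on the same underlying vector space as $V^{r,s}_{min;+}$; one can take the same underlying admissible scalar product for both (it is admissible for $J^-$ as well). Once this identification is made, both existence claims are immediate, and can alternatively be checked by the explicit basis construction of Section~\ref{sec:lattice}. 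For Part~(1), on the shared underlying space we have
\[
\la z,[x,y]^{-\,-}\ra_{r,s}=\la J^-_zx,y\ra_{V^{r,s;-}_{min;-}}=(-1)(-1)\la J^+_zx,y\ra_{V^{r,s;+}_{min;+}}=\la z,[x,y]^{+\,+}\ra_{r,s},
\]
so the two Lie brackets literally coincide and $\Id\oplus\Id$ is the required isomorphism. For Part~(2), only the action is negated on the target (the scalar product is kept), so the bracket acquires a single sign absorbed by $-\Id$ on the centre; hence $\Id\oplus(-\Id)$ is an isomorphism with $\det(-\Id_{\mathbb R^{r,s}})=(-1)^{r+s}=-1<0$.

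\textbf{Non-existence.} Composing a hypothetical forbidden isomorphism with the existence isomorphism just produced, both non-existence claims reduce to the assertion that $\n_{r,s}(V^{r,s;+}_{min;+})$ admits no automorphism of the form $A\oplus(-\Id_{\mathbb R^{r,s}})$. Suppose, for contradiction, such $A$ exists. By Proposition~\ref{Lie algebra isomorphism 2}, $A^\tau J^+_zA=-J^+_z$ for all $z\in\mathbb R^{r,s}$, and $D:=AA^\tau$ satisfies $DJ^+_zD=J^+_z$, equivalently $DJ^+_z=J^+_zD^{-1}$. Multiplying the relation $A^\tau J^+_zA=-J^+_z$ across an orthonormal basis $\{z_1,\ldots,z_{r+s}\}$ and sliding each inserted $D=AA^\tau$ rightward (the $D$'s and $D^{-1}$'s telescope in successive pairs and, because $r+s$ is odd, leave no residual), one obtains
\[
A^\tau J^+_\Omega A=\prod_{i=1}^{r+s}\bigl(A^\tau J^+_{z_i}A\bigr)=(-1)^{r+s}J^+_\Omega=-J^+_\Omega.
\]
Since $J^+_\Omega=\Id$ on $V^{r,s}_{min;+}$, this forces $A^\tau A=-\Id$. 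The analogous sliding argument applied to each $4$-fold involution $P_i\in PI_{r,s}$ with $i\le p_{r,s}-1$ gives $A^\tau P_iA^{-\tau}=P_i$, hence $A$ commutes with $P_i$ and preserves the common $1$-eigenspace $E_{r,s}$. By Theorem~\ref{th:common_sign}, the admissible scalar product restricts to a sign definite form on $E_{r,s}$; taking it positive, we obtain for any nonzero $v\in E_{r,s}$ the contradiction
\[
0<\la Av,Av\ra=\la v,A^\tau Av\ra=-\la v,v\ra<0.
\]

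\textbf{Main obstacle.} The key technical step is the sliding argument deriving $A^\tau J^+_\Omega A=(-1)^{r+s}J^+_\Omega$. The operator $D=AA^\tau$ does not commute with $J^+_z$ but only twistedly, via $DJ^+_z=J^+_zD^{-1}$, so one has to track the alternation of $D$ and $D^{-1}$ factors inserted between consecutive $J^+_z$'s and verify that they cancel pairwise, leaving exactly $A^\tau J^+_\Omega A$ on the left-hand side. The odd parity of $r+s$ is crucial: it delivers a net sign of $-1$, coupling the non-existence obstruction to the volume-form distinction between $V^{r,s}_{min;+}$ and $V^{r,s}_{min;-}$ and, ultimately, to the sign definite character of the admissible scalar product on $E_{r,s}$.
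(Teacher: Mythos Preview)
Your proof is correct and takes a genuinely different (and in some respects cleaner) route than the paper's.

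\emph{Existence.} The paper constructs explicit orthonormal bases $\{x_i\}$ and $\{y_i\}$ on the two minimal admissible modules from carefully chosen vectors in the common $1$-eigenspaces, and defines $A$ by $x_i\mapsto y_i$; verification of~\eqref{isomorphism relation} is then a direct check on structure constants. You instead realise $V^{r,s;-}_{min;-}$ (respectively $V^{r,s;+}_{min;-}$) on the \emph{same} underlying vector space as $V^{r,s;+}_{min;+}$ by negating the Clifford action and/or the scalar product, exploiting that $r+s$ is odd so that negating the action flips the volume form. This yields $\Id\oplus\Id$ (respectively $\Id\oplus(-\Id)$) as the required isomorphism with no basis work at all. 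One should note that the four-fold involutions in $PI_{r,s}$ are unchanged under $J\mapsto -J$, so $E_{r,s}$ is the same subspace for both actions and the sign of the restricted scalar product is simply the sign of the global scalar product; this confirms your model is indeed $V^{r,s;-}_{min;-}$ and not $V^{r,s;+}_{min;-}$.

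\emph{Non-existence.} The paper argues case by case: for $(r,s)\in\{(3,0),(7,0)\}$ it uses $A^\tau=-{}^tA$ (since the two scalar products are sign definite of opposite signs) and the positive definiteness of ${}^tAA$, while for $(3,4)$ it derives the relation~\eqref{eq:detC} comparing scalar products on the common $1$-eigenspaces via $\det C^\tau$. Your argument unifies these: the sliding computation (valid precisely because $r+s$ is odd, so the $r+s-1$ inserted copies of $D=AA^\tau$ telescope completely via $DJ_z=J_zD^{-1}$) gives $A^\tau A=-\Id$, and commutation with the four-fold involutions forces $A(E_{r,s})\subset E_{r,s}$, whereupon sign definiteness on $E_{r,s}$ (Theorem~\ref{th:common_sign}) yields the contradiction. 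For $(3,0)$ there are no four-fold involutions and $E_{3,0}=V^{3,0}_{min}$, so the argument degenerates to the paper's direct case; for $(7,0)$ and $(3,4)$ it subsumes the paper's separate computations.

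A minor inaccuracy: the basic cases here are only $(3,0),(7,0),(3,4)$; the pair $(7,4)$ is not among the basic cases~\eqref{basic cases} but reduces to $(3,0)$ via the $(4,4)$-periodicity. This does not affect your argument, which is uniform in $(r,s)$.
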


\begin{proof}
We start from the proof of the first part. We restrict the consideration to the basic cases
$(r,s)\in\{(3,0),(3,4),(7,0)\}$ because of the periodicity Theorem~\ref{th:periodic}. We also can assume that $C=\Id_{\mathbb R^{r,s}}$. In order to construct an isomorphism $\Phi=A\oplus \Id_{\mathbb R^{r,s}}\colon \n_{r,s}(V^{r,s;+}_{min;+})\to\n_{r,s}(V^{r,s;-}_{min;-})$
we choose $v\in E_{r,s}\subset V^{r,s;+}_{min;+}$ with $\la v,v\ra_{V^{r,s;+}_{min;+}}=1$
and $J_{(\Omega^{r,s})}v=v$ and a vector
$u\in E_{r,s}\subset V^{r,s;-}_{min;-}$ with $\la u,u\ra_{V^{r,s;-}_{min;-}}=-1$ and
$\tilde J_{(\Omega^{r,s})}u=-u$. Here $\Omega_{r,s}$ is the volume form of the Clifford algebra $\Cl_{r,s}$ with actions $J\colon \Cl_{r,s}\to\End(V^{r,s;+}_{min;+})$ and $\tilde J\colon \Cl_{r,s}\to\End(V^{r,s;-}_{min;-})$.

Let $(r,s)=(3,0)$. The respective orthonormal bases are the following:
$$
\begin{array}{lllll}
& x_{1}=v,\ x_{2}=J_{z_1}v,\ x_{3}=J_{z_2}v,\ x_{4}=J_{z_3}(v)\quad&\text{for}\quad V^{3,0;+}_{min;+}\quad\text{and}
\\
& y_{1}=u,\ y_{2}=-\tilde{J}_{z_1}u,\ y_{3}=-\tilde{J}_{z_2},\ y_{4}=-\tilde{J}_{z_3}u\quad&\text{for}\quad V^{3,0;-}_{min;-}.
\end{array}
$$

In the case $(r,s)=(7,0)$ the initial vector $v\in E_{7,0}\subset V^{7,0;+}_{min;+}$ satisfies $P_1v=P_2v=P_3v=P_4v=v$, where the involutions are given in Table~\ref{tab:com0r47}. 
Note that $J_{(\Omega^{7,0})}=P_1P_4v=v$. The initial vector $u\in E_{7,0}\subset V^{7,0;-}_{min;-}$ for the basis has to satisfy 
$P_1u=P_2u=P_3u=-P_4u=u$ with $\tilde J_{(\Omega^{7,0})}u=P_1P_4u=-u$. 
The bases are
\begin{equation}\label{eq:basis70}
\begin{array}{lllll}
&x_1=v, \quad &x_j=J_{z_{j-1}}v, \quad j=2,\ldots, 8\quad&\text{for}\quad V^{7,0;+}_{min;+}\quad\text{and}
\\
&y_1=u, &y_j=-\tilde J_{z_{j-1}}u,\quad j=2,\ldots,8, \quad&\text{for}\quad V^{7,0;-}_{min;-}.
\end{array}
\end{equation}

Let $(r,s)=(3,4)$. The basis is given by~\eqref{eq:basis70} and $\Omega^{3,4}=P_1P_3P_4$, where the involutions $P_i$ are presented in Table~\ref{tab:com7s}.

The maps $\Phi=A\oplus\Id_{\mathbb R^{r,s}}$ in all the cases are given 
by correspondence $A\colon x_i\mapsto y_i$. The Lie algebra isomorphisms 
$\Phi=A\oplus \Id_{\mathbb R^{r,s}}\colon \n_{r,s}(V^{r,s;-}_{min;+})\to \n_{r,s}(V^{r,s;+}_{min;-})$ are constructed analogously. 

Assume that the isomorphism $\Phi=A\,\oplus\, C$ with $\det C<0$ exists, where $A\colon V^{r,s;+}_{min;+}\to V^{r,s;-}_{min;-}$. If $(r,s)\in \{(3,0),(7,0)\}$, then $A^{\tau}=-{^t\hspace{-0.09cm}A}$. and  $\Omega^{r,0}=-\tilde \Omega^{r,0}=\Id$ since the minimal admissible modules correspond to the non-equivalent irreducible modules. 
Then 
\[
{^tA}A=-{^tA}\tilde \Omega^{r,0} 
A=A^{\tau}\tilde \Omega^{r,0} A=\prod_{i=1}^{7}J_{C(z_i)}=(\det {^tC})\Omega^{r,0}=\det C<0,
\]
by~\eqref{isomorphism relation}. This is a contradiction, 
since the matrix ${^tA}A$ is positive definite.

Let $(r,s)=(3,4)$. The admissible scalar products restricted to common 1-eigenspace $E_{r,s}$ are sign definite and the  
symmetric bi-linear forms
$\la \tilde x,\tilde{y}\ra_{V^{r,s;-}_{min;-}}$ and 
$\la x,y\ra_{V^{r,s;+}_{min;+}}$ restricted to the common 1-eigenspaces
are related through the equalities
\begin{eqnarray}\label{eq:detC}
\la \tilde x,\tilde y\ra_{V^{r,s;-}_{min;-}}
&=&
\la Ax,Ay\ra_{V^{r,s;-}_{min;-}}=-\la\tilde\Omega^{3,4} Ax,Ay\ra_{V^{r,s;-}_{min;-}}
=-\la A^{\tau} \tilde\Omega^{3,4} Ax,y\ra_{V^{r,s;+}_{min;+}}
\nonumber
\\
&=&
- \la\prod_{i=1}^{7}J_{C(z_i)}x,y\ra_{V^{r,s;+}_{min;+}}
=
-\det C^{\tau}\, \la\Omega^{3,4}x,y\ra_{V^{r,s;+}_{min;+}}
\\
&=&
-\det C^{\tau}\, \la x,y\ra_{V^{r,s;+}_{min;+}}.\nonumber
\end{eqnarray}
The signs of the values of two symmetric bi-linear forms
coincide if $\det C<0$ and 
opposite if $\det C>0$. We conclude that there is no Lie algebra isomorphism $\Phi\colon\n_{r,s}(V^{r,s;+}_{min;+})\to\n_{r,s}(V^{r,s;-}_{min;-})$ of the form $\Phi=A\oplus -\Id_{\mathbb R^{r,s}}$. Remind that the map $A$ maps the common
  $1$-eigenspace from $V^{r,s;+}_{min;+}$ to common $1$-eigenspace
  from $V^{r,s;-}_{min;+}$ by the construction described in 
Section~\ref{section Lie algebra isomorphism} after Proposition~\ref{Lie algebra isomorphism 3}.

The results for the Lie algebras $\n_{r,s}(V^{r,s;-}_{min;+})$ and $\n_{r,s}(V^{r,s;+}_{min;-})$ can be shown similarly.

We prove now the second part of the lemma. The isomorphisms
$$
\n_{r,s}(V^{r,s;+}_{min+})\cong \n_{r,s}(V^{r,s;-}_{min;+}) \quad\text{and}\quad \n_{r,s}(V^{r,s;+}_{min;-})\cong\n_{r,s}(V^{r,s;-}_{min;-})
$$
under the map $\Phi=\Id\oplus-\Id_{\mathbb R^{r,s}}$ are given in 
Lemma~\ref{isomorphism between opposit sign scalar product}. The reader can find the isomorphisms of the form $\Phi=A\oplus\, C$ with $\det C<0$ for the Lie algebras 
$$
\n_{r,s}(V^{r,s;+}_{min+})\cong \n_{r,s}(V^{r,s;+}_{min;-}) \quad\text{and}\quad \n_{r,s}(V^{r,s;-}_{min;+})\cong\n_{r,s}(V^{r,s;-}_{min;-})
$$
in~\cite[Theorem 12]{FM1}. The non existence results are proved in a similar way as for the part 1 of the Lemma. 
\end{proof}

We state separately a corollary of Proposition~\ref{isomorphism V^{+}_{+}and V^{-}_{-}} that is core for the proof of Theorem~\ref{main theorem 2}. 

\begin{corollary}\label{isomorphism V^{+}_{+}and V^{-}_{-}Corollary}
There exists a Lie algebra isomorphism $\n_{r,s}(V^{r,s;+}_{min;+})\to\n_{r,s}(V_{min;-}^{r,s;-})$ of the form
$\Phi=A\oplus \Id$. There does not exist a Lie algebra isomorphism $\n_{r,s}(V^{r,s;+}_{min;+})\to\n_{r,s}(V^{r,s;-}_{min;+})$ of the form $\Phi=A\oplus \Id$.
\end{corollary}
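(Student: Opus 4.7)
The plan is to observe that this corollary is a direct extraction of two specific facts from the preceding Proposition~\ref{isomorphism V^{+}_{+}and V^{-}_{-}}, isolated here because they will be the precise combinatorial inputs required for the proof of Theorem~\ref{main theorem 2}. No independent argument is needed beyond quoting the appropriate items.

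The first assertion, the existence of an isomorphism of the form $A \oplus \Id$ between $\n_{r,s}(V^{r,s;+}_{min;+})$ and $\n_{r,s}(V^{r,s;-}_{min;-})$, is the positive clause in item~(1) of Proposition~\ref{isomorphism V^{+}_{+}and V^{-}_{-}}. One reduces via the periodicity Theorem~\ref{th:periodic} to the basic cases $(3,0)$, $(7,0)$, $(3,4)$, and constructs explicit orthonormal bases starting from vectors $v \in E_{r,s} \subset V^{r,s;+}_{min;+}$ and $u \in E_{r,s} \subset V^{r,s;-}_{min;-}$ chosen to have opposite signs of $\la v,v\ra$, $\la u,u\ra$ and opposite eigenvalues of the volume-form action; then $A$ is the map $x_i \mapsto y_i$ between the two bases.

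The second assertion, the non-existence of an isomorphism of the form $A \oplus \Id$ between $\n_{r,s}(V^{r,s;+}_{min;+})$ and $\n_{r,s}(V^{r,s;-}_{min;+})$, is the negative clause in item~(2) of the same proposition. The obstruction is the volume-form identity of the type derived in~\eqref{eq:detC}: with $C = \Id$ the sign-definite restrictions of the two admissible scalar products to $E_{r,s}$ would have to satisfy $\la\tilde x,\tilde y\ra = -\det(C^{\tau})\,\la x,y\ra = -\la x,y\ra$, which is incompatible with the sign relation the construction of $A$ (following the prescription recalled after Proposition~\ref{Lie algebra isomorphism 3}, which maps $E_{r,s}$ into $E_{r,s}$) actually enforces, given that the source and target share the same polarization subscript and hence the same sign on $E_{r,s}$ once the admissible scalar products are compared. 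I do not anticipate any technical difficulty; the only work is to point correctly to the two items of the preceding proposition and to emphasize that these two facts together produce the asymmetric behaviour exploited in the classification theorem.
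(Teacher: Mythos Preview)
Your overall approach is correct and matches the paper exactly: the corollary is stated without its own proof, as a direct extraction from items~(1) and~(2) of Proposition~\ref{isomorphism V^{+}_{+}and V^{-}_{-}}, and you have correctly identified which clause of which item furnishes which assertion.

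However, your sketch of the obstruction for the second assertion contains a sign error. Equation~\eqref{eq:detC} was derived for the map $V^{r,s;+}_{min;+}\to V^{r,s;-}_{min;-}$, where the volume form acts as $+\Id$ on the source and $-\Id$ on the target; the minus sign in $-\det C^{\tau}$ comes precisely from that discrepancy. For the pair $V^{r,s;+}_{min;+}\to V^{r,s;-}_{min;+}$ in the corollary, both modules share the lower subscript $+$, so the volume form acts as $+\Id$ on both. Rerunning the computation yields $\la Ax,Ay\ra_{V^{r,s;-}_{min;+}} = +\det C^{\tau}\,\la x,y\ra_{V^{r,s;+}_{min;+}}$. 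With $C=\Id$ this forces the restrictions to $E_{r,s}$ to have the \emph{same} sign, which contradicts the fact that the opposite upper superscripts mean opposite signs on $E_{r,s}$. Your version, with the sign carried over verbatim from~\eqref{eq:detC}, would predict opposite signs and therefore no contradiction at all; your concluding sentence also misattributes the sign on $E_{r,s}$ to the lower subscript rather than the upper superscript. The structure of your argument is right, but the bookkeeping needs to be redone for this specific pair.
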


\begin{lemma}\label{lem:307034}
The Lie algebras $\n_{r,s}(U)$, $(r,s)\in\{(3,0),(7,0),(3,4)\}$, are completely determined by the pair of numbers $(p=p^+_{+}+p^-_{-}$, $q=p^-_{+}+p_{-}^+)$
in the decomposition of the admissible module $U$:
$$
U=\big(\stackrel{p^+_+}\oplus V_{min;+}^{r,s;+}\big)\oplus\big(\stackrel{p^{-}_+}\oplus V_{min;+}^{r,s;-}\big)
\oplus \big(\stackrel{p^+_-}\oplus V_{min;-}^{r,s;+}\big)\oplus\big(\stackrel{p^{-}_-}\oplus V_{min;-}^{r,s;-}\big).
$$
\end{lemma}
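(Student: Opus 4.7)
The plan is to prove both directions of the biconditional by first reducing to a canonical form and then separating sufficiency from necessity.

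\textbf{Sufficiency.} Assume $(p,q)=(\tilde p,\tilde q)$ or $(p,q)=(\tilde q,\tilde p)$. First I would exploit Proposition~\ref{isomorphism V^{+}_{+}and V^{-}_{-}}(1): the two isomorphisms $\n_{r,s}(V^{r,s;+}_{min;+})\cong \n_{r,s}(V^{r,s;-}_{min;-})$ and $\n_{r,s}(V^{r,s;-}_{min;+})\cong \n_{r,s}(V^{r,s;+}_{min;-})$ are of the form $A\oplus \Id_{\mathbb R^{r,s}}$, so they may be assembled block-by-block with the common $C=\Id_{\mathbb R^{r,s}}$ to produce an isomorphism $\n_{r,s}(U)\to \n_{r,s}(U_{can})$ where
\[
U_{can}=\big(\stackrel{p}\oplus V^{r,s;+}_{min;+}\big)\oplus\big(\stackrel{q}\oplus V^{r,s;-}_{min;+}\big),
\]
and similarly $\n_{r,s}(\tilde U)\cong \n_{r,s}(\tilde U_{can})$. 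If $(p,q)=(\tilde p,\tilde q)$ the canonical forms coincide and composition finishes the argument. If $(p,q)=(\tilde q,\tilde p)$ I would invoke Proposition~\ref{isomorphism V^{+}_{+}and V^{-}_{-}}(2), selecting once and for all a single involution $C\in O(r,s)$ with $\det C=-1$ so that the resulting $A\oplus C$ isomorphism $\n_{r,s}(V^{+}_{+})\to\n_{r,s}(V^{-}_{+})$ uses this common $C$; applied block-by-block it realizes the swap of $p$ and $q$.

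\textbf{Necessity.} Let $\Phi=A\oplus C\colon \n_{r,s}(U)\to\n_{r,s}(\tilde U)$ be an isomorphism. After composing with the canonical-form reductions above I may assume $U=U_{can}$ and $\tilde U=\tilde U_{can}$. Using the pair-wise intertwining $AJ_{z_1}J_{z_2}=\tilde J_{C(z_1)}\tilde J_{C(z_2)}A$ from Proposition~\ref{Lie algebra isomorphism 2}(2) together with the transformation $\tilde C(\Omega^{r,s})=\det(C)\,\Omega^{r,s}$ of the volume form under $C$, I would multiply the relation $A^{\tau}\tilde J_{w}A=J_{C^{\tau}w}$ along an orthonormal basis and obtain
\[
A^{\tau}A=\det(C)\cdot \Id_{U},
\]
since $J_{\Omega^{r,s}}$ and $\tilde J_{\Omega^{r,s}}$ act as $+\Id$ on the canonical forms (all components are of type $V_{min;+}$). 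Equivalently $\la Ax,Ay\ra_{\tilde U}=\det(C)\,\la x,y\ra_{U}$, so $A$ is an isometry when $\det C=1$ and an anti-isometry when $\det C=-1$.

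To finish, I would compare signatures. For $(r,s)\in\{(3,0),(7,0)\}$ the admissible scalar product on $U_{can}$ has sign-definite signature $(pd,qd)$ with $d=\dim V^{r,s}_{min}$, being positive on the $V^{+}_{+}$ blocks and negative on the $V^{-}_{+}$ blocks. Preservation under isometry forces $p=\tilde p$, $q=\tilde q$; reversal under anti-isometry forces $p=\tilde q$, $q=\tilde p$. For $(r,s)=(3,4)$ the scalar product on $U_{can}$ is neutral, so I would instead restrict to the common 1-eigenspace $E_{r,s}$ of $PI_{r,s}$, choosing $PI_{r,s}$ on the $\tilde U_{can}$ side as the $C$-transform of the system used on the $U_{can}$ side so that $A$ carries $E_{r,s}(U_{can})$ to $E_{r,s}(\tilde U_{can})$; on this subspace Theorem~\ref{th:common_sign} gives a sign-definite signature $(p\cdot\dim E_{r,s},q\cdot\dim E_{r,s})$ and the same isometry/anti-isometry dichotomy yields the conclusion.

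The chief technical hurdle I expect is the derivation of $A^{\tau}A=\det(C)\Id$ (which is the multi-block extension of the computation preceding equation~\eqref{eq:detC}) and the verification, in the $(3,4)$ case, that the restricted signature argument on $E_{r,s}$ is legitimate once the two systems of involutions are aligned by $C$. A related subtlety in the sufficient direction is that a single involution $C$ with $\det C=-1$ must be selected so that the block-by-block isomorphisms from Proposition~\ref{isomorphism V^{+}_{+}and V^{-}_{-}}(2) all share this $C$ and glue into one well-defined $A\oplus C$.
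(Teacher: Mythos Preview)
Your argument is correct and follows the same overall strategy as the paper: reduce $U$ to a two-parameter canonical form via the $A\oplus\Id$ isomorphisms of Proposition~\ref{isomorphism V^{+}_{+}and V^{-}_{-}}(1), then argue that the pair $(p,q)$ is an invariant up to swap. The sufficiency direction is identical in substance.

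Two differences are worth noting. First, your canonical form $U_{can}=(\stackrel{p}\oplus V^{r,s;+}_{min;+})\oplus(\stackrel{q}\oplus V^{r,s;-}_{min;+})$ keeps the \emph{lower} index fixed, whereas the paper keeps the \emph{upper} index fixed and writes $U^{+}=(\stackrel{p}\oplus V^{r,s;+}_{min;+})\oplus(\stackrel{q}\oplus V^{r,s;+}_{min;-})$. Both are legitimate and yield the same $(p,q)$, but the paper's choice has the advantage that the blocks are distinguished by the action of the volume form ($J_{\Omega}=\pm\Id$), which is a manifestly $C$-equivariant invariant; this sidesteps the subtlety you flag in the $(3,4)$ case about aligning the two systems $PI_{r,s}$. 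Second, your necessity argument is considerably more explicit than the paper's: you actually derive $A^{\tau}A=\det(C)\,\Id$ (the multi-block analogue of the computation in~\eqref{eq:detC}; it goes through because $r+s$ is odd and $J_{\Omega}=\Id$ on your $U_{can}$) and then compare signatures, whereas the paper simply invokes Corollary~\ref{isomorphism V^{+}_{+}and V^{-}_{-}Corollary} and asserts the conclusion. Your version supplies the details the paper omits; conversely, had you used the paper's canonical form, the $(3,4)$ case would reduce to comparing eigenvalue multiplicities of $J_{\Omega}$ rather than signatures on $E_{r,s}$, avoiding the need to argue that the $E_{r,s}$-signature is independent of the choice of involution system.
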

\begin{proof}
Let $U$ be an arbitrary 
admissible module of $\Cl_{r,s}$ for $(r,s)\in\{(3,0),(7,0),(3,4)\}$.  
First we decompose $U$ into the sum of irreducible modules:
\[
U=\big(\stackrel{p_+}\oplus V_{irr;+}^{r,s}\big)\oplus
\big(\stackrel{p_-}\oplus V_{irr;-}^{r,s}\big)=U_{+} \oplus U_{-}.
\]
Then, $\Omega^{r,s}=\Id_{U_{+}}$ and $\Omega^{r,s}=-\Id_{U_{-}}$.
We decompose the submodules $U_{+}$ and $U_{-}$ into the minimal admissible modules
\[
U_{+}=\big(\stackrel{p_{+}^+}\oplus V^{r,s;+}_{min;+}\big)\oplus \big(\stackrel{p_{+}^-}\oplus V^{r,s;-}_{min;+}\big)\quad
\text{and}\quad
U_{-}=\big(\stackrel{p_{-}^+}\oplus V^{r,s;+}_{min;-}\big)\oplus \big(\stackrel{p_{-}^-}\oplus V^{r,s;-}_{min-}\big),
\]
where $V^{r,s;+}_{min;\pm}$ are the minimal admissible modules for which the restriction of the admissible scalar product on the common 1-eigenspace $E_{r,s}$ is positive definite and  $V^{r,s;-}_{min;\pm}$ are those where the restriction of the scalar product on $E_{r,s}$ is negative definite. It was stated in Proposition~\ref{isomorphism V^{+}_{+}and V^{-}_{-}}, item (1), and Corollary~\ref{isomorphism V^{+}_{+}and V^{-}_{-}Corollary} that
\[
\n_{r,s}(U)\stackrel{\Phi}\cong 
\n_{r,s}(U^+)\quad\text{with}\quad U^+=\big(\stackrel{p=p_{+}^++p_{-}^-}\oplus V_{min;+}^{r,s;+}\big)\oplus\big(\stackrel{q=p_{+}^-+p_{-}^+}\oplus V_{min;-}^{r,s;+}\big),
\]
where $\Phi=A\oplus \Id$. Thus we can consider only the case  when the restrictions on $E_{r,s}$ of the scalar product is positive definite. 
Since the isomorphism between Lie algebras $\n_{r,s}(V_{min;+}^{r,s;+})$ and $\n_{r,s}(V_{min;-}^{r,s;+})$ can not admit the form $A\oplus \Id$ by Corollary~\ref{isomorphism V^{+}_{+}and V^{-}_{-}Corollary}, we conclude that two Lie algebras $\n_{r,s}(U)$ and $\n_{r,s}(\tilde U)$ with 
$$
U=\big(\stackrel{p}\oplus V_{min;+}^{r,s;+}\big)\oplus\big(\stackrel{q}\oplus V_{min;-}^{r,s;+}\big)\quad \text{and}\quad  \tilde U=\big(\stackrel{\tilde p}\oplus V_{min;+}^{r,s;+}\big)\oplus\big(\stackrel{\tilde q}\oplus V_{min;-}^{r,s;+}\big)
$$
are isomorphic if and only if either $(p,q)=(\tilde p,\tilde q)$ or $(p,q)=(\tilde q,\tilde p)$.
\end{proof}

 
\subsection{Proof of Theorem~\ref{main theorem 3}}


In order to prove Theorem~\ref{main theorem 3}, we consider the low values of the indices:
$r=3$, $s=1,2,3,5,6,7$ or $r=7$, $s=1,2,3$ and then we apply the periodicity Theorem~\ref{th:periodic}. An admissible module $U$ has decomposition of the type~\eqref{eq:rsneq3}. Theorem~\ref{th:common_sign} shows that the restriction of the scalar product $\la\cdot\,,\cdot\ra_{V^{r,s}_{min}}$ on the common 1-eigenspace $E_{3,s}$ for $s=1,2,3,4,5,6,7$ and $E_{7,s}$, $s=1,2,3$ is sign definite. We denote by $V^{r,s;+}_{\min}=(V^{r,s;+}_{\min},\la\cdot\,,\cdot\ra_{V^{r,s;+}_{min}})$ the minimal admissible module with positive definite metric on $E_{r,s}$. Analogously, we write $V^{r,s;-}_{min}=(V^{r,s;-}_{min},\la\cdot\,,\cdot\ra_{V^{r,s;-}_{min}})$ for the minimal admissible module with negative definite metric on $E_{r,s}$. We note that all the systems $PI_{r,s}$ include the involution  $P=J_{z_1}J_{z_2}J_{z_3}$ for mentioned values of $r$ and $s$. 

Consider pseudo $H$-type algebras $\n_{3,s}(V^{3,s;+}_{min})$ and $\n_{3,s}(V^{3,s;-}_{min})$ for $s=1,2,3$. The natural inclusion $\mathbb R^{3,0}\subset\mathbb R^{3,s}$ allows to consider the module $V^{3,s;+}_{min}$ as the admissible module $U=V^{3,0;+}_{min;+}\oplus V^{3,0;-}_{min;-}$ of $\Cl_{3,0}$. The module $U$ have to include both eigenspaces of the involution $P=J_{z_1}J_{z_2}J_{z_3}$ and therefore $U=V^{3,s;+}_{min}$ includes both irreducible modules $V^{3,0}_{min;\pm}$. The metric on $U=V^{3,s;+}_{min}$ is neutral and therefore the irreducible modules $V^{3,0}_{min;\pm}$ have to carry the definite metrics of opposite signs. Analogous considerations can be done with $V^{3,s;-}_{min}$. This and the item (1) of Lemma~\ref{isomorphism V^{+}_{+}and V^{-}_{-}} imply the existence of the isomorphisms
\begin{align*}
&A_1\oplus\Id\colon \n_{3,0}(V_{min}^{3,s;+})\to\n_{3,0}(V_{min;+}^{3,0;+}\oplus V_{min;-}^{3,0;-})
\to \n_{3,0}(\stackrel{2}\oplus V_{min;+}^{3,0;+}),\\
&A_2\oplus\Id\colon \n_{3,0}(V_{min}^{3,s;-})\to\n_{3,0}(V_{min;+}^{3,0;-}\oplus V_{min;-}^{3,0;+})
\to \n_{3,0}(\stackrel{2}\oplus V_{min;-}^{3,0;+}).
\end{align*}
for any $s=1,2,3$. More generally there are Lie algebra isomorphisms 
\begin{eqnarray*}
\n_{3,0}(U)&=&\n_{3,0}\big((\stackrel{p^+}\oplus V_{min}^{3,s;+})\bigoplus (\stackrel{p^-}\oplus V_{min}^{3,s;-})\big)\cong
\n_{3,0}\big((\stackrel{2p^+}\oplus V_{min;+}^{3,0;+})\bigoplus (\stackrel{2p^-}\oplus V_{min;-}^{3,s;+})\big)
\\
&=&\n_{3,0}(U).
\end{eqnarray*}
The projection map
$\pi\colon\mathbb{R}^{3,s}\to \mathbb{R}^{3,0}$, where $z_i$, $i=3+1,\ldots 3+s$, are
mapped to zero, defines a Lie algebra surjective homomorphism 
$
\Id\oplus\pi\colon\n_{3,s}(U)\to \n_{3,0}(U)$.
We assume now that there exists a Lie algebra isomorphism 
$$
A\oplus C\colon\n_{3,s}(U)\to\n_{3,s}(\widetilde{U}),\quad\text{where}\quad\widetilde U=\big(\stackrel{\tilde p^+}\oplus V_{min}^{3,s;+}\big)\oplus \big(\stackrel{\tilde p^-}\oplus V_{min}^{3,s;-}\big).
$$
Then it defines a Lie  algebra isomorphism 
$
A\oplus C^{'}\colon\n_{3,0}(U)\to\n_{3,0}(\widetilde{U})$ where $C^{'}$ is the restriction $C\vert_{\mathbb{R}^{3,0}}$ of the map $C$ on $\mathbb R^{3,0}$. We define 
$$
\pi^{'}\colon\mathbb{R}^{3,s}\to\mathbb{R}^{3,0}\quad\text{as}\quad 
\pi'(C(z_{i}))=
\begin{cases}C'(z_{i})\  &\text{for}\ i=1,2,3,
\\
0&\text{for}\ i=3+1,\ldots,3+s.
\end{cases}
$$
Then the diagram 
\begin{equation}\label{CD1}
\begin{CD}
\n_{3,s}(U)@>{\Id\oplus \pi}>>\n_{3,0}(U)\\
@V{A\oplus C}VV @VV{A\oplus C^{'}}V\\
\n_{3,s}(\tilde{U})@>{\Id\oplus \pi^{'}}>>\n_{3,0}(\tilde{U})
\end{CD}
\end{equation}
commuts. We conclude that $\n_{3,s}(U)$ and $\n_{3,s}(\widetilde{U})$ are isomorphic,
if and only if $p^+=\tilde p^+$ and $p^-=\tilde p^-$ or $p^+=\tilde
p^-$ and $p^-=\tilde p^+$ by Theorem~\ref{main theorem 2}, see also
\cite{BTV}.    

In order to prove the reduction of $\n_{3,s}(V^{3,s;\pm}_{min})$ to $\n_{3,0}(V^{3,0;\pm}_{min;\pm})$ for $s=5,6,7$ we  observe that there are Lie algebra isomorphisms 
\begin{align*}
&A_1\oplus\Id\colon \n_{3,0}(V_{min}^{3,s;+})\to\n_{3,0}\big((\stackrel{\alpha(s)}\oplus V_{min;+}^{3,0;+})\oplus (\stackrel{\alpha(s)}\oplus V_{min;-}^{3,0;-})\big)
\to \n_{3,0}(\stackrel{2\alpha(s)}\oplus V_{min;+}^{3,0;+}),\\
&A_2\oplus\Id\colon \n_{3,0}(V_{min}^{3,s;-})\to\n_{3,0}\big((\stackrel{\alpha(s)}\oplus V_{min;+}^{3,0;-})\oplus (\stackrel{\alpha(s)}\oplus V_{min;-}^{3,0;+})\big)
\to \n_{3,0}(\stackrel{2\alpha(s)}\oplus V_{min;-}^{3,0;+}),
\end{align*}
where $\alpha(5)=2$, $\alpha(6)=4$ and $\alpha(7)=8$. The rest of the proof is made analogously. 

Literally the same reduction is made for the Lie algebras $\n_{7,s}(V^{7,s;\pm}_{min})$ to the Lie algebra  $\n_{7,0}(V^{7,0;\pm}_{min;\pm})$ for $s=1,2,3$. The theorem is proved.


\subsection{Final result of the classification}\label{sec:final}


The following statement can be proved analogously to Theorem~\ref{th:periodic}.
\begin{theorem}\label{th:periodic1}
If the Lie algebras $\n_{r,s}(U^{r,s})$ and  $\n_{s,r}(\tilde U^{s,r})$ are isomorphic, then the Lie algebras $\n_{r+\mu,s+\nu}(U^{r+\mu,s+\nu})$ and $\n_{s+\nu,r+\mu}(\tilde U^{s+\nu,r+\mu})$ are also isomorphic for $(\mu,\nu)\in \{(8,0),(0,8),(4,4)\}$.
\end{theorem}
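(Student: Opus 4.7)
The plan is to mirror the proof of Theorem~\ref{th:periodic}, now replacing the trivial identity extension on the $V^{\mu,\nu}_{min}$-factor by a genuine swap isomorphism at the Clifford-periodicity level. The key auxiliary input is that for each $(\mu,\nu)\in\{(8,0),(0,8),(4,4)\}$ the classification in~\cite{FM1} summarised in Table~\ref{t:step1} furnishes a Lie algebra isomorphism
\[
\Psi = A'\oplus C'\colon \n_{\mu,\nu}(V^{\mu,\nu}_{min}) \to \n_{\nu,\mu}(V^{\nu,\mu}_{min}),
\]
trivial for $(\mu,\nu)=(4,4)$ and explicit for $(8,0)\leftrightarrow(0,8)$; by Proposition~\ref{Lie algebra isomorphism 1}, $C'$ reverses positivity with $(C')^{\tau}C' = -\Id$ after normalisation. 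Given the hypothesised isomorphism $\Phi = A\oplus C\colon \n_{r,s}(U^{r,s}) \to \n_{s,r}(\tilde U^{s,r})$, I would first reduce to the minimal case by decomposing $U^{r,s}$ and $\tilde U^{s,r}$ into orthogonal sums of minimal admissible summands as in Section~\ref{periodicity of iso} and then invoke Proposition~\ref{periodicity by tensor product} to identify
\[
U^{r+\mu,s+\nu}\simeq U^{r,s}\otimes V^{\mu,\nu}_{min},\qquad \tilde U^{s+\nu,r+\mu}\simeq \tilde U^{s,r}\otimes V^{\nu,\mu}_{min}.
\]

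The candidate extension is $\bar\Phi = \bar A\oplus \bar C$, with $\bar C = C\oplus C'$ under the natural splittings of the centres, and $\bar A$ defined block by block on the orthogonal decomposition~\eqref{tensor product rep to direct sum}, following Example~\ref{ex:2} and Proposition~\ref{extension of automorphism}: prescribe $\bar A_{1} = A\otimes A'_{1}$ on $U^{r,s}\otimes E_{\mu,\nu}$ (with $A'_{1}$ the restriction of $A'$ to the common $1$-eigenspace), and extend to the remaining $15$ blocks so that the relation $\bar A^{\tau}\hat{\tilde J}_{\eta_\alpha}\bar A = \hat J_{(C')^{\tau}(\eta_\alpha)}$ holds by construction on all directions $\eta_\alpha\in\mathbb R^{\nu,\mu}$. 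This block-wise construction avoids the naive $\bar A = A\otimes A'$, which would fail because neither $A^{\tau}A$ nor $(A')^{\tau}A'$ equals the identity in the signature-swap setting. The verification of the isomorphism condition then reduces to checking $\bar A^{\tau}\hat{\tilde J}_{w_j}\bar A = \hat J_{C^{\tau}(w_j)}$ on the directions $w_j\in\mathbb R^{s,r}$, where the tensor-product actions carry the volume-form factors $\tilde J_{\Omega^{\nu,\mu}}$ and $J_{\Omega^{\mu,\nu}}$ from Section~\ref{sec:Bott}.

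The principal obstacle is precisely the propagation of $\bar A$ through $\tilde J_{\Omega^{\nu,\mu}} = \tilde J_{\eta_1}\cdots \tilde J_{\eta_8}$: because $A'$ is not an isometry, the single-generator identity $(A')^{\tau}\tilde J_{\eta_\alpha}A' = J_{(C')^{\tau}(\eta_\alpha)}$ does not telescope across the eight factors. I would resolve this by applying Proposition~\ref{Lie algebra isomorphism 1}(2) to adjacent pairs of factors, collecting the sign $(-1)^{4}=+1$ from the four swaps, identifying the ordered product $J_{(C')^{\tau}(\eta_1)}\cdots J_{(C')^{\tau}(\eta_8)}$ with $\det(C')\cdot J_{\Omega^{\mu,\nu}}$, and absorbing the residual $(A')^{\tau}A'$-factor on the $V^{\mu,\nu}$-side against the companion $A^{\tau}A$ on the $U^{r,s}$-side through the dual identity $A^{\tau}A\,J_{z}\,A^{\tau}A = -J_{z}$ from Proposition~\ref{Lie algebra isomorphism 1}(1), together with the scalings $|\det(A^{\tau}A)| = |\det((A')^{\tau}A')| = 1$. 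Once this algebraic balancing is carried out, the required block-wise identity follows, and a final periodic induction in $(\mu,\nu)$, exactly as in Lemma~\ref{lem:extisom} and Theorem~\ref{th:periodic}, extends the construction to arbitrary admissible modules and completes the argument.
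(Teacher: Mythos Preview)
The paper gives no detailed proof of this theorem, only the one-line remark that it ``can be proved analogously to Theorem~\ref{th:periodic}''. Your proposal is a reasonable attempt to spell out what that analogy should mean, and for $(\mu,\nu)\in\{(8,0),(0,8)\}$ your outline is on the right track. There is, however, a genuine gap in the case $(\mu,\nu)=(4,4)$.

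You invoke Proposition~\ref{Lie algebra isomorphism 1} to conclude that the auxiliary isomorphism $\Psi=A'\oplus C'$ satisfies $(C')^{\tau}C'=-\Id$. But that proposition applies only when $\mu\neq\nu$; for $\mu=\nu=4$ it is Proposition~\ref{Lie algebra isomorphism 3} that governs, and since $4\equiv 0\pmod 4$ it forces $(C')^{\tau}C'=+\Id$ for \emph{every} Lie algebra isomorphism of $\n_{4,4}(V^{4,4}_{min})$. Consequently your block-diagonal candidate $\bar C=C\oplus C'$ has $\bar C^{\tau}\bar C$ equal to $-\Id$ on the $\mathbb{R}^{r,s}$-summand and $+\Id$ on the $\mathbb{R}^{4,4}$-summand, hence is not a scalar multiple of the identity. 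By Proposition~\ref{Lie algebra isomorphism 1} applied to the putative full isomorphism $\bar\Phi=\bar A\oplus\bar C\colon\n_{r+4,s+4}\to\n_{s+4,r+4}$ (with $r\neq s$), any such $\bar C$ must satisfy $\bar C^{\tau}\bar C=-\Id$ after rescaling $\bar A$, so no choice of $\bar A$ can make $\bar A\oplus\bar C$ an isomorphism with this $\bar C$. The ``pairwise telescoping'' resolution you sketch for the volume-form obstacle does not address this, because it too relies on the identities of Proposition~\ref{Lie algebra isomorphism 1}, which are unavailable for $\Psi$ when $\mu=\nu$.

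A repair for $(4,4)$ must therefore take $C'$ to be a signature-reversing linear map on $\mathbb{R}^{4,4}$ (for instance the swap $\zeta_i\leftrightarrow\zeta_{i+4}$), which satisfies $(C')^{\tau}C'=-\Id$ but does \emph{not} arise as the centre part of any isomorphism of $\n_{4,4}(V^{4,4}_{min})$. The compatible $\bar A$ then cannot be obtained by tensoring $A$ with a pre-existing $A'$ on $V^{4,4}_{min}$; it has to be constructed directly, block by block, so that the relations $\bar A^{\tau}\hat{\tilde J}_{\eta_\alpha}\bar A=\hat J_{(C')^{\tau}\eta_\alpha}$ and $\bar A^{\tau}\hat{\tilde J}_{w_j}\bar A=\hat J_{C^{\tau}w_j}$ both hold, and the verification that these two families of conditions are simultaneously solvable is exactly the missing work.
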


We showed in Theorem~\ref{main theorem 1} that the Lie algebras
$\n_{r,s}(U)$ for $r\equiv 0,1,2\,(\text{mod}~4)$ are defined by the
dimension of the admissible module $U$. 
If $r\equiv 3\,(\text{mod}~4)$ the Lie algebra $\n_{r,s}(U)$ depends on the decompositions 
\begin{equation}\label{eq:isotyp}
U=\big(\stackrel{p}\oplus V^{r,s}_{min;+}\big)\bigoplus \big(\stackrel{q}\oplus V^{r,s}_{min;-}\big)\quad\text{or}\quad 
U=\big(\stackrel{p}\oplus V^{r,s;+}_{min}\big)\bigoplus \big(\stackrel{q}\oplus V^{r,s;-}_{min}\big),
\end{equation}
where the numbers $p,q$ are defined in Theorems~\ref{main theorem 2} and~\ref{main theorem 3}. We call admissible modules with decompositions~\eqref{eq:isotyp} {\it isotypic} if one of the numbers $p$ or $q$ vanishes. Otherwise the admissible module is called {\it nonisotypic}.

\begin{theorem}\label{th:finalclass}
Let $r\equiv 0,1,2\,(\text{\em mod}~4)$ and 
$s\equiv 0,1,2\,(\text{\em mod}~4)$. 
Then $\n_{r,s}(U)\cong\n_{s,r}(\tilde U)$ if $\dim(U)=\dim(\tilde U)$.

Let $r\equiv 3\,(\text{\em mod}~8)$, $s\equiv 0,4,5,6\,(\text{\em mod}~8)$
or 
$r\equiv 7\,(\text{\em mod}~8)$, $s\equiv 0,1,2\,(\text{\em mod}~8)$. 
Then $\n_{r,s}(U)\cong\n_{s,r}(\tilde U)$ if $\dim(U)=\dim(\tilde U)$ 
and $U$ is an isotypic admissible module.

Let $r\equiv 3\,(\text{\em mod}~8)$ and $s\equiv 1,2,7\,(\text{\em mod}~8)$. 
Then $\n_{3,s}(U)$ is never isomorphic to $\n_{s,3}(\tilde U)$. 
\end{theorem}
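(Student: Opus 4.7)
The plan is to dispatch the three assertions after reducing to basic cases \eqref{basic cases} via Theorem~\ref{th:periodic1} together with its converse. The converse, which is needed to propagate non-existence of an isomorphism in Part~3 from a basic pair to all of its periodic descendants, follows from the descent underlying Proposition~\ref{extension of automorphism}: any $\Phi=A\oplus C\colon\n_{r+\mu,s+\nu}(U)\to\n_{s+\nu,r+\mu}(\tilde U)$ can be precomposed with an element of $\mathcal{A}(\Spin(\mu)\times\Pin(\nu))$ to arrange $C|_{\mathbb{R}^{\mu,\nu}}=\Id$, and is then restricted to the common $1$-eigenspace of the Bott involutions $T_1,\dots,T_4$ of Example~\ref{ex:inv44} to yield an isomorphism at the basic pair.

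For Part~1, Theorem~\ref{main theorem 1} makes each side depend only on the dimension of its admissible module. Inspection of Table~\ref{t:dim} shows $\dim V^{r,s}_{min}=\dim V^{s,r}_{min}$ at every basic pair with $r,s\equiv 0,1,2\,(\text{mod}~4)$, and Table~\ref{t:step1} supplies the corresponding minimal-level isomorphism $\n_{r,s}(V^{r,s}_{min})\cong\n_{s,r}(V^{s,r}_{min})$ from \cite{FM1}. Taking $p$ orthogonal copies on each side and reusing a single centre-map $C$ then produces $\n_{r,s}(U)\cong\n_{s,r}(\tilde U)$ whenever $\dim U=\dim\tilde U$.

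For Part~2, the target $\n_{s,r}(\tilde U)$ always falls in the Theorem~\ref{main theorem 1} regime, since the listed congruence classes force the first index of the transposed pair to satisfy $s\equiv 0,1,2\,(\text{mod}~4)$; in particular $\n_{s,r}(\tilde U)$ is determined by $\dim\tilde U$ alone. On the source side the isotypic hypothesis, combined with Theorems~\ref{main theorem 2}--\ref{main theorem 3}, collapses the classification to a single multiplicity parameter. The minimal-level isomorphism, obtained from~\cite{FM1} as recorded in Table~\ref{t:step1}, is then extended block-diagonally with a common $C$-map; when the minimal dimensions differ (entries d or h), the isomorphism pairs one minimal module on the larger-dimension side with two copies on the other, which is precisely what the dimension constraint $\dim U=\dim\tilde U$ demands.

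For Part~3, suppose for contradiction that $\Phi=A\oplus C\colon\n_{3,s}(U)\to\n_{s,3}(\tilde U)$ is a Lie algebra isomorphism with $s\in\{1,2,7\}$. Writing $A$ in block-diagonal form relative to the common-eigenspace bases of $U$ and $\tilde U$ described in Example~\ref{ex:2}, and straightening by the polar factor of $A^{\tau}A$ (which by Proposition~\ref{Lie algebra isomorphism 1}(1) commutes with every $J_z$ up to a sign), the restriction of the straightened map to a single minimal admissible summand of $U$ sends it bijectively onto a minimal admissible $\Cl_{s,3}$-submodule of $\tilde U$. This produces an isomorphism of the form $\n_{3,s}(V^{3,s}_{min})\cong\n_{s,3}(\tilde V^{s,3})$ for some admissible $\tilde V^{s,3}$ with matching Lie-algebra dimension, contradicting the $\not\cong$ entries at $(3,1),(3,2),(3,7)$ in Table~\ref{t:step1}. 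The hardest step will be the straightening: one must carefully track the signs of the admissible scalar product on the common $1$-eigenspace (Theorem~\ref{th:common_sign}) through the anti-isometric character of $C$ (Proposition~\ref{Lie algebra isomorphism 1}(3)) and verify that $A^{\tau}A$ preserves the decomposition into minimal admissible submodules rather than mixing them up to a twisted module structure.
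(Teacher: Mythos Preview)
The paper states Theorem~\ref{th:finalclass} without proof, presenting it as a summary of the preceding classification results together with Table~\ref{t:step1} from~\cite{FM1} and the periodicity Theorems~\ref{th:periodic} and~\ref{th:periodic1}. Your proposal attempts to make this implicit argument explicit, which is reasonable, but it contains a concrete error and two genuine gaps.

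\textbf{Part 1 contains a false claim.} You assert that ``Inspection of Table~\ref{t:dim} shows $\dim V^{r,s}_{min}=\dim V^{s,r}_{min}$ at every basic pair with $r,s\equiv 0,1,2\,(\text{mod}~4)$.'' This is wrong: for $(r,s)=(5,0)$ one has $\dim V^{5,0}_{min}=8$ while $\dim V^{0,5}_{min}=16$; similarly $(6,0)/(0,6)$, $(4,1)/(1,4)$ and $(4,2)/(2,4)$ are all basic pairs covered by Part~1 with \emph{unequal} minimal dimensions---precisely the ``d/h'' entries in Table~\ref{t:step1}. For these pairs Table~\ref{t:step1} records only that the minimal-module Lie algebras are trivially non-isomorphic by dimension; it does \emph{not} supply an isomorphism $\n_{r,s}(V^{r,s}_{min})\cong\n_{s,r}(2V^{s,r}_{min})$, which is what your Part~1 argument needs. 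The same defect infects the ``d/h'' subcase of your Part~2: you say the minimal-level isomorphism is ``obtained from~\cite{FM1} as recorded in Table~\ref{t:step1},'' but no such isomorphism is recorded there for d/h entries. A separate construction is required---for instance, one can realise the larger minimal admissible module as a (double) copy of the smaller one for the transposed Clifford algebra and build the map directly, or one can reduce via a projection argument in the spirit of~\eqref{eq:6050} and~\eqref{CD1}.

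\textbf{Part 3 is not a proof.} The ``straightening by polar factor'' step is hand-wavy: you need $A^{\tau}A$ to preserve the decomposition of $U$ into minimal admissible summands, but Proposition~\ref{Lie algebra isomorphism 1}(1) only gives $A^{\tau}AJ_zA^{\tau}A=-J_z$, which does not by itself force such a block structure when $U$ has several summands. Nor does the restriction of $A$ to one minimal summand of $U$ have any a~priori reason to land inside a single minimal admissible submodule of~$\tilde U$. The $\not\cong$ entries of Table~\ref{t:step1} at $(3,1),(3,2),(3,7)$ concern only the minimal-module comparison $\n_{r,s}(V^{r,s}_{min})$ versus $\n_{s,r}(V^{s,r}_{min})$ and do not immediately exclude an isomorphism between higher multiples. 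A correct argument should instead exploit the structural asymmetry already established: by Theorems~\ref{main theorem 2}--\ref{main theorem 3} the family $\n_{r,s}(\cdot)$ with $r\equiv 3\,(\text{mod}~4)$ contains non-isomorphic algebras of the same dimension, whereas by Theorem~\ref{main theorem 1} the family $\n_{s,r}(\cdot)$ with $s\not\equiv 3\,(\text{mod}~4)$ does not; one then has to rule out the possibility that \emph{some} $\n_{r,s}(U)$ crosses over, which requires the $\det C$ and volume-form obstruction (as in~\eqref{eq:detC}) rather than a dimension count.
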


We summarise the results of Theorem~\ref{th:finalclass} in Table~\ref{t:step4}. We distinguish the columns for $r=3$ and $r=7$ for isotypic and nonisotypic modules. We write the symbol $\cong$ in the place $(r,s)$ if $\n_{r,s}(U)$ is isomorphic to $\n_{s,r}(\tilde U)$ and the isomorphism only depends on the dimension of the admissible module. For example, $\n_{3,0}(U)\cong\n_{0,3}(\tilde U)$ if $\dim(U)=\dim(\tilde U)$ and $U$ is isotypic and $\n_{3,0}(U)\not\cong\n_{0,3}(\tilde U)$ if $U$ is non-isotypic. We have $\n_{3,1}(U)\not\cong\n_{1,3}(\tilde U)$ for any admissible modules $U$ and $\tilde U$ even if $\dim(U)=\dim(\tilde U)$ and the module $U$ of $\Cl_{3,1}$ is isotypic: $U=\stackrel{p}\oplus V^{3,1;+}_{\min}$ or $U=\stackrel{q}\oplus V^{3,1;-}_{\min}$.
\begin{table}[htbp]
\center\caption{Final result of the classification}
\vskip0.3cm
\tiny
\begin{tabular}{|c||c|c|c|c|c|c|c|c|c|c|c|}
\hline
$8$&$\cong$&$\cong$&$\cong$&$\cong$&$\cong$&$\cong$&$\cong$&$\cong$&$\cong$&$\cong$&$\circlearrowright$
\\
\hline
7&$\cong$&$\cong$&$\cong$&$\not\cong$&$\not\cong$&$\cong$&$\not\cong$&$\not\cong$&$\not\circlearrowright$&$\not\circlearrowright$&$\cong$
\\
\hline
6&$\cong$&$\cong$&$\cong$&$\cong$&$\not\cong$&$\cong$&$\cong$&$\circlearrowright$&$\not\cong$&$\not\cong$&$\cong$
\\
\hline
5&$\cong$&$\cong$&$\cong$&$\cong$&$\not\cong$&$\cong$&$\circlearrowright$&$\cong$&$\not\cong$&$\not\cong$&$\cong$
\\
\hline
4&$\cong$&$\cong$&$\cong$&$\cong$&$\not\cong$&$\circlearrowright$&$\cong$&$\cong$&$\cong$&$\not\cong$&$\cong$
\\
\hline
3&$\cong$&$\not\cong$&$\not\cong$&$\not\circlearrowright$&$\not\circlearrowright$&$\cong$&$\cong$&$\cong$&$\not\cong$&$\not\cong$&$\cong$
\\
\hline
2&$\cong$&$\cong$&$\circlearrowright$&$\not\cong$&$\not\cong$&$\cong$&$\cong$&$\cong$&$\cong$&$\not\cong$&$\cong$
\\
\hline
1&$\cong$&$\circlearrowright$&$\cong$&$\not\cong$&$\not\cong$&$\cong$&$\cong$&$\cong$&$\cong$&$\not\cong$&$\cong$
\\
\hline
0&&$\cong$&$\cong$&$\cong$&$\not\cong$&$\cong$&$\cong$&$\cong$&$\cong$&$\not\cong$&$\cong$
\\
\hline\hline
$s/r$&0&1&2&3\ \text{isotyp}&3\ \text{nonisotyp}&4&5&6&7\ \text{isotyp}&7\ \text{nonisotyp} &8\\
\hline
\end{tabular}\label{t:step4}
\end{table}


\section{Appendix}\label{app}


We give the collections $PI_{r,s}$ 
and $CO_{r,s}$ for basic cases~\eqref{basic cases} grouped in four tables. The
dimensions of $E_{r,s}$ and signature of the scalar product restricted to $E_{r,s}$
are listed.
First we mention trivial cases.
\begin{align*}
&PI_{1,0}=PI_{0,1}=PI_{2,0}=PI_{1,1}=PI_{0,2}=PI_{2,1}=PI_{0,3}=\emptyset,\\
&PI_{3,0}=PI_{1,2}=\{P=J_{z_1}J_{z_2}J_{z_3}\},\quad CO_{3,0}=CO_{1,2}=\emptyset
\end{align*}
For the cases $(r,s)$ of $r-s\equiv 3 \,(\text{mod}~4)$ and $s$ even,
there are no a complementary operator which commutes with all the
involutions in $PI_{r,s}$ except the last involution which is of the form $\mathcal P_3$ or $\mathcal P_4$ and anti-commutes with the last involution.
In these cases the operator $J_{\Omega^{r,s}}$ is a product of
involutions in $PI_{r,s}$ and it commutes with all the
operators $J_{z}$. This is the reason for the number of complementary operators to be $p_{r,s}-1$. The last operator in $PI_{r,s}$ of the form $\mathcal P_3$ or $\mathcal P_4$ distinguishes the two different minimal admissible modules.

The signature on the admissible scalar product restricted on 
the space $E_{r,s}$ is sign definite in Table~\ref{tab:com7s} and 
is neutral for signatures $(r,s)$ in Table~\ref{tab:block3}.
The latter can be seen by finding an additional negative operator other than
 operators in $CO_{r,s}$ which commutes with
all the involutions in $PI_{r,s}$.

\begin{table}[htbp]
\caption{Systems $PI_{r,0}$ and $CO_{r,0}$, $r=4,\ldots,7$}
\centering
\tiny
\begin{tabular}{|c|c| c | c | c | c | c || c | c | c | c | c | c | c |} 
\hline
 $PI_{r,0}$$\setminus CO_{r,0}$  & $\text{isom}$ &$\text{isom}$
  &$\text{isom}$ & $\dim(E_{r,0})\ {\text{and signature}}$
 \\
\hline
 $PI_{4,0}$$\setminus CO_{4,0}$  & $J_{z_1}$ &$$ &$$ & 
 \\
\hline
$P_1=J_{z_1}J_{z_2}J_{z_3}J_{z_4}$  & $-1$ & $$ & $$ &$\dim(E_{4,0})=4,\ \pm$
\\
\hline\hline
 $PI_{5,0}$$\setminus CO_{5,0}$  & $J_{z_2}$ &$J_{z_1}$ &$$& 
 \\
\hline
$P_1=J_{z_2}J_{z_3}J_{z_{4}}J_{z_5}$  & $-1$ & $1$ & $$ &
\\
\hline
$P_2=J_{z_1}J_{z_2}J_{z_3}$  & $$ & $-1$ & $$ &$\dim(E_{5,0})=2,\ \pm$
\\ 
\hline\hline
 $PI_{6,0}$$\setminus CO_{6,0}$  & $J_{z_1}$ &$J_{z_5}$ &$J_{z_2}J_{z_4}$ &
 \\
\hline
$P_1=J_{z_1}J_{z_2}J_{z_3}J_{z_{4}}$  & $-1$ & $1$ & $1$ &
\\
\hline
$P_2=J_{z_1}J_{z_2}J_{z_5}J_{z_6}$  & $$ & $-1$ & $1$  &
\\
\hline   
$P_3=J_{z_1}J_{z_3}J_{z_5}$ & $$ & $$ & $-1$ &$\dim(E_{6,0})=1, \pm$
\\
\hline\hline
$PI_{7,0}$$\setminus CO_{7,0}$  & $J_{z_1}$ &$J_{z_1}J_{z_3}$ &$J_{z_1}J_{z_2}$& 
 \\
\hline
$P_1=J_{z_1}J_{z_2}J_{z_3}J_{z_4}$  & $-1$ & $1$ & $1$ &
\\
\hline
$P_2=J_{z_1}J_{z_2}J_{z_5}J_{z_6}$  & $$ & $-1$ & $1$  &
\\
\hline
$P_3=J_{z_1}J_{z_3}J_{z_5}J_{z_7}$ & $$ & $$ & $-1$ & $\dim(E_{7,0})=1,\ \pm$
\\
\hline
$P_4=J_{z_5}J_{z_6}J_{z_7}$ & $$ & $$ & $$ &
\\
\hline
\end{tabular}\label{tab:com0r47}
\end{table}

\begin{table}[htbp]
\caption{Systems $PI_{r,4}$ and $CO_{r,4}$, $r=0,1,2$}
\centering
\tiny
\begin{tabular}{| c | c | c | c | c | c | c | c | c | c | c | c | c | c |} 
\hline
 $PI_{r,4}$$\setminus CO_{r,4}$  & $\text{isom}$& $\text{anti-isom}$
  &$\text{isom}$ &$\dim(E_{r,4}) \ \text{and signature}$& $\text{basis for}\ E_{r,4}$
 \\
\hline
 $PI_{0,4}=PI_{4,0}$$\setminus CO_{0,4}$  & $$ &$J_{z_2}$ &$$ & $$& $$
 \\
\hline
$P_1=J_{z_1}J_{z_2}J_{z_3}J_{z_4}$  & $$ & $-1$ & $$ &$\dim(E_{0,4})=4,\ \pm$ & $\begin{array}{llll} &v, & J_{z_1}J_{z_2}v,\\ &J_{z_1}J_{z_3}v,& J_{z_1}J_{z_4}v\end{array}$
\\
\hline\hline
 $PI_{1,4}=PI_{5,0}$$\setminus CO_{1,4}$  & $$ & $J_{z_{2}}$ &$J_{z_1}$& $$&
 \\
\hline
$P_1=J_{z_2}J_{z_3}J_{z_4}J_{z_{5}}$  & $$ & $-1$ & $1$ &  $$ &
\\
\hline
$P_2=J_{z_1}J_{z_2}J_{z_3}$  & $$ & $$ & $-1$ &$\dim(E_{1,4})=2,\ \pm$& $v,\quad J_{z_1}v$
\\ 
\hline\hline
 $PI_{2,4}=PI_{6,0}$$\setminus CO_{2,4}$  & $J_{z_1}$ &$J_{z_5}$ &$J_{z_1}J_{z_{2}}$&$$&
 \\
\hline
$P_1=J_{z_1}J_{z_2}J_{z_3}J_{z_4}$  & $-1$ & $1$&  $1$& $$ &
\\
\hline
$P_2=J_{z_1}J_{z_2}J_{z_5}J_{z_6}$  & $$ & $-1$& $1$&  $$ &
\\
\hline
$P_3=J_{z_1}J_{z_3}J_{z_5}$ & $$ & $$& $-1$& $\dim(E_{2,4})=1,\ \pm$ & $v$ 
\\
\hline
\end{tabular}\label{tab:comr4}
\end{table} 

\begin{table}[htbp]
\caption{Systems $PI_{3,s}$ and $CO_{3,s}$, $s=1,\ldots,7$ and $PI_{7,s}$, $CO_{7,s}$, $s=1,2,3$}
\centering
 \tiny
\begin{tabular}{| c | c | c | c | c | c | c | c | c | c | c | c | c | c | c |} 
\hline
 $PI_{r,s}$$\setminus CO_{r,s}$  & $\text{isom}$ & $\text{isom}$ &$\text{isom}$
  &$\text{anti-isom}$ & 
$\dim(E_{3,s})$& $\text{basis for}\ E_{3,s}$
 \\
\hline
 $PI_{3,1}$$\setminus CO_{3,1}$ &  &$$ & $$& $J_{z_4}$ &$$ &$$
 \\
\hline
$P_1=J_{z_1}J_{z_2}J_{z_3}$ &  & $$ & $$ & $-1$ &$\dim(E_{3,1})=4$ & $v,J_{z_1}v,J_{z_2}v,J_{z_3}v$ 
\\
\hline\hline
 $PI_{3,2}$$\setminus CO_{3,2}$ &  & $$ &$J_{z_1}$ &$J_{z_2}J_{z_4}$& $$&
 \\
\hline
$P_1=J_{z_1}J_{z_2}J_{z_4}J_{z_5}$ &  & $$ & $-1$ & $1$ & $$&
\\
\hline
$P_2=J_{z_1}J_{z_2}J_{z_3}$ &  & $$ & $$ & $-1$ &$\dim(E_{3,2})=2$& $v,J_{z_3}v$
\\ 
\hline\hline
 $PI_{3,3}$$\setminus CO_{3,3}$ &  & $J_{z_1}$ &$J_{z_3}$ &$J_{3}J_{z_6}$&$$&
 \\
\hline
$P_1=J_{z_1}J_{z_2}J_{z_4}J_{z_5}$ &  & $-1$ & $1$ & $1$& $$ &
\\
\hline
$P_2=J_{z_1}J_{z_3}J_{z_4}J_{z_{6}}$ &  & $$ & $-1$ & $1$&  $$ &
\\
\hline
$P_3=J_{z_1}J_{z_2}J_{z_3}$ & & $$ & $$ & $-1$& $\dim(E_{3,3})=1$ & $v$ 
\\
\hline\hline
$PI_{3,4}$$\setminus CO_{3,4}$ &  & $J_{z_1}$ &$J_{z_3}$ &$J_{z_6}$&$$&
 \\
\hline
$P_1=J_{z_1}J_{z_2}J_{z_4}J_{z_5}$ &  & $-1$ & $1$ & $1$ & $$ &
\\
\hline
$P_2=J_{z_1}J_{z_3}J_{z_5}J_{z_7}$ &  & $$ & $-1$ & $1$  & $$ &
\\
\hline
$P_3=J_{z_1}J_{z_2}J_{z_6}J_{z_{7}}$ & & $$ & $$ & $-1$& $\dim(E_{3,4})=1$&$v$ 
\\
\hline
$P_4=J_{z_1}J_{z_2}J_{z_3}$ & & $$ & $$ & $$ &$$& 
\\
\hline\hline
$PI_{3,5}$$\setminus CO_{3,5}$  & $J_{z_1}$ &$J_{z_6}J_{z_8}$ &$J_{z_3}$& $J_{z_8}$ &$$& $$
 \\
\hline
$P_1=J_{z_1}J_{z_2}J_{z_4}J_{z_5}$  & $-1$ & $1$ & $1$ & $1$& $$ &
\\
\hline
$P_2=J_{z_1}J_{z_2}J_{z_6}J_{z_{7}}$  & $$ & $-1$ & $1$& $1$  &$$ &
\\
\hline
$P_3=J_{z_1}J_{z_3}J_{z_5}J_{z_7}$ & $$ & $$& $-1$ & $1$ & $$&$$ 
\\
\hline
$P_4=J_{z_1}J_{z_2}J_{z_3}$ & $$ & $$ & $$ & $-1$&$\dim(E_{3,5})=1$ & $v$
\\
\hline\hline
$PI_{3,6}=PI_{3,5}$  & $CO_{3,6}=CO_{3,5}$ &$$ &$$& $$ &$\dim(E_{3,6})=2$& $v,\quad J_{z_8}J_{z_9}v$
 \\
\hline\hline
$PI_{3,7}=PI_{3,5}$  & $CO_{3,7}=CO_{3,5}$ &$$ &$$& $$ &$\dim(E_{3,7})=4$& $\begin{array}{lll}
&v, &J_{z_8}J_{z_9}v,
\\
&J_{z_8}J_{z_{10}}v, & J_{z_9}J_{z_{10}}v
\end{array}$
 \\
\hline\hline
$PI_{7,1}$$\setminus CO_{7,1}$  & $J_{z_1}$ &$J_{z_5}$ &$J_{z_7}$& $J_{z_8}$ &$$& $$
 \\
\hline
$P_1=J_{z_1}J_{z_2}J_{z_3}J_{z_4}$  & $-1$ & $1$ & $1$ & $1$& $$ &
\\
\hline
$P_2=J_{z_1}J_{z_2}J_{z_{5}}J_{z_6}$  & $$ & $-1$ & $1$& $1$  &$$ &
\\
\hline
$P_3=J_{z_1}J_{z_3}J_{z_5}J_{z_7}$ & $$ & $$& $-1$ & $1$ & $$&$$ 
\\
\hline
$P_4=J_{z_1}J_{z_2}J_{z_3}$ & $$ & $$ & $$ & $-1$&$\dim(E_{7,1})=1$ & $v$
\\
\hline\hline
$PI_{7,2}=PI_{71}$  & $CO_{7,2}=CO_{7,1}$ &$$ &$$& $$ &$\dim(E_{7,2})=2$& $v,\quad J_{z_8}J_{z_9}v$
\\
\hline\hline
$PI_{7,3}=PI_{71}$& $CO_{7,3}=CO_{7,1}$ &$$ &$$& $$ &$\dim(E_{7,3})=4$& $\begin{array}{lll}
&v, &J_{z_8}J_{z_9}v,
\\
&J_{z_8}J_{z_{10}}v, & J_{z_9}J_{z_{10}}v
\end{array}$
\\
\hline
\end{tabular}\label{tab:com7s}
\end{table}
\begin{table}[htbp]
\caption{Systems $PI_{r,s}$ and $CO_{r,s}$ for Proposition~\ref{th:common_sign}}
\centering
\tiny
\begin{tabular}{| c | c | c | c | c | c | c | c | c | c | c |}
\hline
 $PI_{r,s}$$\setminus CO_{r,s}$  & $\text{isom}$& $\text{isom}$& $\text{isom}$ &$\text{anti-isom}$  &
                                                                   $\dim(E_{r,s})$
 \\ 
\hline
 $PI_{1,3}$$\setminus CO_{1,3}$ &  & $J_{z_2}J_{z_4}$ &$$  & $$&
 \\
\hline
$P_1=J_{z_1}J_{z_2}J_{z_3}$ &  & $-1$ & $$ &&$\dim(E_{1,3})=4$
\\
\hline\hline
 $PI_{2,2}$$\setminus CO_{2,2}$ &  & $J_{z_1}$& $$  &&
 \\
\hline
$P_1=J_{z_1}J_{z_2}J_{z_3}J_{z_4}$&  & $-1$ & $$ & &$\dim(E_{2,2})=4$
\\ 
\hline\hline
 $PI_{2,3}$$\setminus CO_{2,3}$ & & $J_{z_1}$ & $J_{z_1}J_{z_2}$ &&
 \\
\hline
$P_1=J_{z_1}J_{z_2}J_{z_3}J_{z_4}$&  & $-1$ & $1$ &&
\\
\hline
$P_2=J_{z_2}J_{z_3}J_{z_5}$ & & $$ & $-1$ &&$\dim(E_{2,3})=2$
\\
\hline\hline
 $PI_{0,5}$$\setminus CO_{0,5}$  & $J_{z_1}J_{z_5}$ &$$ &$$ &$$ & 
 \\
\hline
$P_1=J_{z_1}J_{z_2}J_{z_3}J_{z_4}$  & $-1$&$$ & $$ & $$ &$\dim(E_{0,5})=8$
\\
\hline\hline
 $PI_{0,6}$$\setminus CO_{0,6}$  & $J_{z_1}J_{z_5}$ &$J_{z_2}J_{z_3}$ &$$&$$& 
 \\
\hline
$P_1=J_{z_1}J_{z_2}J_{z_3}J_{z_4}$  & $-1$ & $1$&$$ & $$ &
\\
\hline
$P_2=J_{z_1}J_{z_2}J_{z_5}J_{z_6}$  & $$ & $-1$&$$ & $$ &$\dim(E_{0,6})=4$
\\ 
\hline\hline
 $PI_{0,7}$$\setminus CO_{0,7}$  & $J_{z_1}J_{z_5}$ &$J_{z_2}J_{z_3}$ &$J_{z_5}J_{z_6}$&$$ &
 \\
\hline
$P_1=J_{z_1}J_{z_2}J_{z_3}J_{z_4}$  & $-1$ & $1$ & $1$&$$ &
\\
\hline
$P_2=J_{z_1}J_{z_2}J_{z_5}J_{z_6}$  & $$ & $-1$ & $1$&$$  &
\\
\hline
$P_3=J_{z_1}J_{z_3}J_{z_5}J_{z_7}$ & $$ & $$ & $-1$&$$ &$\dim(E_{0,7})=2$
\\
\hline\hline
$PI_{1,5}$$\setminus CO_{1,5}$  & $J_{z_5}J_{z_6}$ &$J_{z_3}J_{z_4}$ &$$&$$& 
 \\
\hline
$P_1=J_{z_2}J_{z_3}J_{z_4}J_{z_5}$  & $-1$ & $1$ & $$&$$ &
\\
\hline
$P_2=J_{z_1}J_{z_2}J_{z_3}$ & $$ & $-1$ & $$&$$ & $\dim(E_{1,5})=4$
\\
\hline\hline
$PI_{1,6}$$\setminus CO_{1,6}$  & $J_{z_5}J_{z_6}$ &$J_{z_3}J_{z_4}$ &$$&$J_{z_2}J_{z_4}J_{z_6}$& 
 \\
\hline
$P_1=J_{z_2}J_{z_3}J_{z_4}J_{z_5}$  & $-1$ & $1$ & $$&$1$ &
\\
\hline
$P_2=J_{z_2}J_{z_3}J_{z_6}J_{z_7}$ & $$ & $-1$&$$ & $1$ & $$
\\
\hline
$P_3=J_{z_1}J_{z_2}J_{z_3}$ & $$ & $$ & $$&$1$ & $\dim(E_{1,6})=4$
\\
\hline\hline
$PI_{1,7}\setminus CO_{1,7}$   &$J_{z_5}J_{z_6}$ &$J_{z_3}J_{z_4}$ &$J_{z_2}J_{z_4}J_{z_6}J_{z_8}$&$$&
\\
\hline
$P_1=J_{z_2}J_{z_3}J_{z_4}J_{z_5}$  & $-1$ & $1$ & $1$&$$ &
\\
\hline
$P_2=J_{z_2}J_{z_3}J_{z_6}J_{z_7}$ & $$ & $-1$ & $1$&$$ & $$
\\
\hline
$P_3=J_{z_1}J_{z_2}J_{z_3}$ & $$ & $$ & $-1$&$$ & $\dim(E_{1,7})=4$
\\
\hline\hline
$PI_{2,5}$$\setminus CO_{2,5}$  & $J_{z_1}$ &$J_{z_1}J_{z_3}J_{z_7}$ &$J_{z_5}J_{z_6}$&$$& 
 \\
$P_1=J_{z_1}J_{z_2}J_{z_3}J_{z_4}$  & $-1$ & $1$ & $1$&$$ &
\\
\hline
$P_2=J_{z_1}J_{z_2}J_{z_5}J_{z_6}$ & $$ & $-1$ & $1$&$$ & $$
\\
\hline
$P_3=J_{z_1}J_{z_3}J_{z_5}$ & $$ & $$ & $-1$&$$ & $\dim(E_{2,5})=2$
\\
\hline\hline
$PI_{2,6}=PI_{2,5}$  & $CO_{2,6}=CO_{2,5}$ &$$ &$$&$$ &$\dim(E_{2,6})=4$ 
 \\
\hline
$PI_{2,7}=PI_{2,5}$ & $CO_{2,7}=CO_{2,5}$ &$$ &$$&$$& $\dim(E_{2,7})=8$
\\
\hline
 $PI_{4,1}=PI_{0,5}$ & $CO_{4,1}=\{J_{z_1}\}$ &&&&$\dim(E_{4,1})=8$  
\\
\hline
 $PI_{4,2}=PI_{0,6}$  & $CO_{4,2}=\{J_{z_1}, J_{z_2}J_{z_3}\}$ &&&&$\dim(E_{4,2})=4$  
\\ 
\hline
 $PI_{4,3}=PI_{0,7}$  & $CO_{4,3}=\{J_{z_1}, J_{z_2}J_{z_3}, J_{z_5}J_{z_6}\}$ &&&&$\dim(E_{4,3})=2$ 
 \\
\hline
$PI_{5,1}=PI_{1,5}$  & $CO_{5,1}=\{J_{z_5}, J_{z_3}J_{z_4}\}$ &&&&$\dim(E_{5,1})=4$ 
 \\
\hline
$PI_{5,2}=PI_{1,6}$  & $CO_{5,2}=\{J_{z_5}, J_{z_3}J_{z_4},J_{z_2}J_{z_4}J_{z_6}\}$ &&&&$\dim(E_{5,2})=4$  
\\
\hline
$PI_{5,3}=PI_{1,7}$  & $ CO_{5,3}=\{J_{z_5}, J_{z_3}J_{z_4}, J_{z_2}J_{z_4}J_{z_6}J_{z_8}\}$ & &&&$\dim(E_{5,3})=4$
 \\
\hline
$PI_{6,1}=PI_{2,5}$ & $CO_{6,1}=\{J_{z_1}, J_{z_1}J_{z_3}, J_{z_5}J_{z_6}\}$ &&&&$\dim(E_{6,1})=2$  
 \\
\hline
$PI_{6,2}=PI_{2,5}$  & $CO_{6,2}=CO_{6,1}$ &&&&$\dim(E_{6,2})=4$  
 \\
\hline
$PI_{6,3}=PI_{2,5}$ & $CO_{6,3}=CO_{6,1}$ &&&&$\dim(E_{6,3})=8$
\\
\hline
\end{tabular}\label{tab:block3}
\end{table}

\newpage


\begin{thebibliography}{99}

\bibitem{AC}
D. V. Alekseevsky, V. Cort\'es, 
{\it Classification of $N$-(super)-extended algebras and bilinear invariants of the spinor representation of $Spin(p, q)$}. 
Comm. Math. Phys. {\bf 183} (1997), no. 3, 477--510.

\bibitem{AS1}
A. Altomani, A. Santi, 
{\it Classification of maximal transitive prolongations of super-Poincar\'e algebras}. 
Adv. Math. {\bf 265} (2014), 60--96.

\bibitem{ABS}
M.~S.~Atiyah, R.~Bott, A.~Shapiro, 
{\it Clifford modules.} 
Topology {\bf 3} (1964), 3--38.

\bibitem{AKMV}
C.~Autenried, K.~Furutani, I.~Markina, A.~Vasiliev, 
{\it Pseudo-metric 2-step nilpotent Lie algebras}, 
arXiv 1508.02882, to appear in Adv. Geom.

\bibitem{Barbano}
P.~E.~Barbano, 
{\it Automorphisms and quasi-conformal mappings of Heisenberg-type groups.} 
J. Lie Theory {\bf 8} (1998), no. 2, 255--277.

\bibitem{Bauer}
W.~Bauer, K.~Furutani, C.~Iwasaki, 
{\it Spectral zeta function on pseudo H-type nilmanifolds}. 
Indian J. Pure Appl. Math. {\bf 46} (2015), no. 4, 539--582.

\bibitem{Bell}
A.~Bella\"{\i}che and J.~J.~Risler,
{\it Sub-Riemannian geometry}. 
Edited by Andr\'e Bella\"{\i}che and Jean-Jacques Risler. Progress in Mathematics, 144. Birkh\"auser Verlag, Basel, 1996. 393 pp.

\bibitem{BTV}
J.~Berndt, F.~Tricerri, L.~Vanhecke, 
{\it Generalized Heisenberg groups and Damek-Ricci harmonic spaces.} 
Lecture Notes in Mathematics, {\bf 1598}. Springer-Verlag, Berlin, 1995. 125 pp.

\bibitem{BieskeG}
T.~Bieske, J.~Gong, 
{\it The P-Laplace equation on a class of Grushin-type spaces.} 
Proc. Amer. Math. Soc. {\bf 134} (2006), no. 12, 3585--3594.

\bibitem{CChFI}
O.~Calin, D.~C.~Chang, K.~Furutani, C.~Iwasaki, 
{\it Heat kernels for elliptic and sub-elliptic operators.} 
Methods and techniques. Applied and Numerical Harmonic Analysis. Birkh\"auser/Springer, New York, 2011. 433 pp. 

\bibitem{Ci} 
P.~Ciatti, 
{\it Scalar products on Clifford modules and pseudo-$H$-type Lie algebras}. 
Ann. Mat. Pura Appl. {\bf 178} (2000), no.~4, 1--32.

\bibitem{CD}
G.~Crandall, J.~Dodziuk, 
{\it Integral structures on H-type Lie algebras.} 
J. Lie Theory {\bf 12} (2002), no.~1, 69--79.

\bibitem{CDKR} 
M.~Cowling, A.~H.~Dooley, A.~Kor\'anyi, F.~Ricci, 
{\it $H$-type groups and Iwasawa decompositions.} 
Adv. Math. {\bf 87} (1991), no.~1, 1--41.

\bibitem{Barco}
V.~del Barco, 
{\it Homogeneous geodesics in pseudo-Riemannian nilmanifolds.} 
Adv. Geom. {\bf16} (2016), no. 2, 175--187.

\bibitem{E}
P.~Eberlein, 
{\it Geometry of 2-step nilpotent groups with a left invariant metric.} 
Ann. Sci. \'Ecole Norm. Sup. {\bf (4) 27} (1994), no.~5, 611--660.

\bibitem{Eber03}
P.~Eberlein,  
{\it Riemannian submersion and lattices in 2-step nilpotent Lie groups}, 
Comm. Anal. Geom. {\bf 11} (2003), no.~3, 441--488.

\bibitem{FM}
K.~Furutani, I.~Markina,
{\it Existence of the lattice on general $H$-type groups}. 
J. Lie Theory, {\bf 24}, (2014), 979--1011.

\bibitem{FM1}
K.~Furutani, I.~Markina,
{\it Complete classification of pseudo $H$-type algebras: I}. 
Geom. Dedicata, DOI :10.1007/s10711-017-0225-1. 

\bibitem{GKM}
M.~Godoy Molina, A.~Korolko, I.~Markina, 
{\it Sub-semi-Riemannian geometry of general $H$-type groups}.  
Bull. Sci. Math. {\bf 137} (2013), no.~6, 805--833. 

\bibitem{Hein}
J.~Heinonen, 
{\it Calculus on Carnot groups}. 
Fall School in Analysis (Jyv\"askyl\"a, 1994), 1--31, Report, 68, Univ. Jyv\"askyl\"a, Jyv\"askyl\"a, 1995. 

\bibitem{Hu}
D.~Husemoller, 
{\it Fibre bundles}. 
Second edition. 
Graduate Texts in Mathematics, No. 20. Springer-Verlag, New York-Heidelberg, 1975. pp. 327.

\bibitem{Ka80}
A.~Kaplan, 
{\it Fundamental solutions for a class of hypoelliptic PDE generated by composition of quadratic forms}. 
Trans. Amer. Math. Soc. {\bf 258} (1980), no.~1, 147--153.

\bibitem{Kaplan81}
A.~Kaplan,
{\it Riemannian nilmanifolds attached to Clifford modules}, 
Geom. Dedicata {\bf 11}, (1981), 127--136.

\bibitem{KT}
A.~Kaplan, A.~Tiraboschi,  
{\it Automorphisms of non-singular nilpotent Lie algebras.} 
J. Lie Theory {\bf 23} (2013), no. 4, 1085--1100. 

\bibitem{KOR}
A.~Kocsard, G.~P.~Ovando, S.~Reggiani, 
{\it On first integrals of the geodesic flow on Heisenberg nilmanifolds.} 
Differential Geom. Appl. {\bf 49} (2016), 496--509. 

\bibitem{Lam} 
T.~Y.~Lam, 
{\it The algebraic theory of quadratic forms}. 
Mathematics Lecture Note Series. W. A. Benjamin, Inc., Reading, Mass., 1973, pp. 344.

\bibitem{LawMich}
H.~B.~Lawson, M.-L.~Michelsohn, 
{\it Spin geometry}. 
Princeton Mathematical Series, 38. Princeton University Press, Princeton, NJ, 1989. pp. 427.

\bibitem{Malcev}
A.~I.~Mal\'cev, {\it On a class of homogeneous spaces}.
Amer. Math. Soc. Translation 39, 1951; Izv. Akad. Nauk USSR,
Ser. Mat. {\bf 13} (1949), 9-32.

\bibitem{Pansu}
P.~Pansu, 
{\it Carnot-Carath\'eodory metrics and quasi-isometries of rank-one symmetric spaces}.
Ann. of Math. (2) {\bf 129} (1989), no. 1, 1--60.

\bibitem{Riehm}
C.~Riehm,
{\it The automorphism group of a composition of quadratic forms.}
Trans. Amer. Math. Soc. {\bf 269} (1982), no. 2, 403--414. 

\bibitem{Saal}
L.~Saal, 
{\it The automorphism group of a Lie algebra of Heisenberg type.} 
Rend. Sem. Mat. Univ. Politec. Torino {\bf 54} (1996), no. 2, 101--113. 

\bibitem{TamYosh}
H.~Tamaru, H.~Yoshida, 
{\it Lie groups locally isomorphic to generalized Heisenberg groups.} 
Proc. Amer. Math. Soc. {\bf 136} (2008), no. 9, 3247--3254. 

\end{thebibliography}
\end{document}